\documentclass[11pt]{article}

\usepackage[margin=1.1in]{geometry}
\usepackage{amsmath,amssymb,amsthm}
\usepackage{mathtools}
\usepackage{microtype}
\usepackage[colorlinks=true,linkcolor=blue,citecolor=blue,urlcolor=blue]{hyperref}

\numberwithin{equation}{section}
\newtheorem{theorem}{Theorem}[section]
\newtheorem{lemma}[theorem]{Lemma}

\newtheorem{proposition}[theorem]{Proposition}
\theoremstyle{remark}
\newtheorem{remark}[theorem]{Remark}
\newtheorem{remarks}[theorem]{Remarks}

\allowdisplaybreaks

\title{Sharp Lifespan Estimates for a Semilinear Wave Equation\\
with Nonlinear Damping}
\author{Firas Kaabi\\
\small Department of Mathematics, Faculty of Sciences of Tunis,\\[-2pt]
\small University of Tunis El Manar, LR Analyse Non-Lin\'eaire et G\'eom\'etrie, LR21ES08,\\[-2pt]
\small El Manar 2, 2092 Tunis, Tunisia\\[-2pt]
\small \texttt{firaskaabi17@gmail.com}}
\date{}

\begin{document}
\maketitle

\begin{abstract}
We study the maximal existence time $T^{*}(\varrho)$ of the solution of the
semilinear wave equation $u_{tt}-\Delta u=u|u|^{p-1}-u_{t}|u_{t}|^{q-1}$ in a
bounded domain, with Dirichlet boundary condition and initial data
$(\varrho f,\varrho g)$, where $1<q<p$ and the amplitude $\varrho$ is large.
For nontrivial $f$ and sufficiently large $\varrho$, the concavity method gives
$T^{*}(\varrho)\leq C\varrho^{-(p-1)/2}$ for $q\leq2p/(p+1)$ and
$T^{*}(\varrho)\leq C\varrho^{-(p-q)/q}$ for $q>2p/(p+1)$, whereas the energy
method gives a lower bound of order $\varrho^{1-p}$ only.  We prove lower
bounds with the same exponents as the upper ones, so that
$T^{*}(\varrho)\asymp\varrho^{-\vartheta(p,q)}$ with
$\vartheta(p,q)=\min\{(p-1)/2,\,(p-q)/q\}$.  The proof rests on a hyperbolic
rescaling which converts the large amplitude into a dilation of the domain and
a coefficient $\varrho^{(q(p+1)-2p)/2}$ in front of the damping term, on a
local existence theory in uniformly local energy norms whose existence time
does not depend on that coefficient, and on a quantitative use of the
dissipation when the coefficient is large.  The threshold $2p/(p+1)$ is the
value of $q$ at which the damping term is invariant under the rescaling.  No
attempt is made to optimize the constants: sharpness is meant throughout at
the level of the exponent.
\end{abstract}

\medskip
\noindent\textbf{Keywords.} Wave equation; nonlinear damping; blow-up;
lifespan estimates; large initial data; uniformly local energy.

\medskip
\noindent\textbf{MSC 2020.} 35L05, 35L15, 35L71, 35B44, 35A01.

\section{Introduction}\label{sec:intro}

Let $\Omega$ be a bounded Lipschitz domain of $\mathbb{R}^{n}$, with $n\geq1$.
We consider the semilinear wave equation
\begin{equation}\label{eq:main}
\begin{cases}
u_{tt}-\Delta u=u|u|^{p-1}-u_{t}|u_{t}|^{q-1},
& (x,t)\in\Omega\times(0,T^{*}),\\
u(x,0)=u_{0}(x)=\varrho f(x),\quad u_{t}(x,0)=u_{1}(x)=\varrho g(x),
& x\in\Omega,\\
u(x,t)=0, & (x,t)\in\partial\Omega\times(0,T^{*}),
\end{cases}
\end{equation}
where $T^{*}=T^{*}(\varrho)$ is the maximal existence time of the solution, in
the class of weak solutions recalled in Section \ref{sec:results}, where
$\varrho>0$ is the amplitude of the initial data, and where the profiles $f$
and $g$ are fixed and satisfy
\begin{equation}\label{eq:data}
f\in H_{0}^{1}(\Omega)\cap L^{\infty}(\Omega),
\qquad
\nabla f\in L^{n\vee2}(\Omega),
\qquad
g\in L^{n\vee2}(\Omega),
\end{equation}
with $n\vee2=\max\{n,2\}$.  The exponents $p$ and $q$ satisfy
\begin{equation}\label{eq:exponents}
\begin{cases}
1<q<p<\infty, & \text{if }n=1\text{ or }n=2,\\
1<q<p<\dfrac{n}{n-2}, & \text{if }n\geq3.
\end{cases}
\end{equation}
Since $\Omega$ is bounded, \eqref{eq:data} implies that
$(f,g)\in H_{0}^{1}(\Omega)\times L^{2}(\Omega)$, so that the data belong to
the energy space; the role of the additional integrability required in
\eqref{eq:data} is explained in Remarks \ref{rem:main} below.

The study of \eqref{eq:main} is motivated by the modelling of structures
subject to large dynamic loads, such as bridges, towers or airplane wings, in
which nonlinear dissipative devices are used to control the vibrations.
Magnetorheological dampers, for instance, produce a force which depends on the
velocity in a nonlinear manner \cite{Spencer1997,Ikhouane2007}, and the
identification and the design of such devices is an active subject
\cite{Kerschen2006,Lazar2013}.  In \eqref{eq:main} a nonlinear source term
competes with a velocity-dependent nonlinear damping, and the amplitude of the
initial disturbance is a natural parameter; the question of how the time of
existence of the solution depends on it is the object of this paper.

The blow-up phenomenon for the semilinear wave equation has been studied by
many authors, both in the whole space and in bounded domains.  For the
undamped equation $u_{tt}-\Delta u=u|u|^{p-1}$, detailed results on the
blow-up rate and on the blow-up profile have been obtained in several settings,
notably in one space dimension, by Merle and Zaag
\cite{MerleZaag2003,MerleZaag2005,MerleZaag2007,MerleZaag2012a,MerleZaag2016}
and by C\^ote and Zaag \cite{CoteZaag2013}.  Ball \cite{Ball1977} showed
that, without damping, the source term forces the solutions with negative
initial energy to blow up in finite time, and Haraux and Zuazua
\cite{Haraux1988} showed that, without source, the damping term produces
global solutions for arbitrary data.  When both terms are present, Levine
\cite{Levine1974} obtained blow-up for negative initial energy in the linearly
damped case, and Georgiev and Todorova \cite{Georgiev1994} developed the local
and global existence theory in the energy class and proved global existence
when $q\geq p$.  Messaoudi
\cite{Messaoudi2001} proved that the solutions with negative initial energy
blow up in finite time whenever $q<p$, and Vitillaro \cite{Vitillaro2000}
extended this result to a range of positive initial energies bounded by an
explicit constant.  General nonexistence theorems for abstract evolution
equations with dissipation were obtained by Levine and Serrin
\cite{LevineSerrin1997}, and refined well-posedness results in the simultaneous
presence of source and damping can be found in
\cite{TodorovaVitillaro2005,BociuLasiecka2010}.  Equations of the form
\eqref{eq:main} with variable exponents were treated in
\cite{Messaoudi2017,SunRenGao2016,Kafini2019}.

Once blow-up is known to occur, the natural question is to estimate the
blow-up time itself, from above and from below.  For hyperbolic problems the
derivation of lower bounds is more delicate than for parabolic ones, since the
comparison techniques available in the parabolic setting do not apply.
Lower bounds for the blow-up time of damped and related wave equations have
been obtained in \cite{SunGuoGao2014,Zhou2015,Liu2023}, and we refer to
\cite{Zhou2007,Takamura2015} for lifespan estimates for the undamped equation.
For initial data of the form $(\varrho f,\varrho g)$, the quadratic
phase-space argument recalled below gives a lower bound of order
$\varrho^{-(p-2)}$ for a strongly damped equation \cite{Kaabi2026a}, and a two-sided estimate for a
semilinear wave equation with fractional structural damping is obtained in
\cite{Kaabi2026b} by a mechanism in which the dissipation cancels in the
concavity functional.
In \cite{BHK}, Bchatnia, Hamouda and Kaabi studied the same equation
\eqref{eq:main} for large values of the amplitude $\varrho$.  Under
conditions of Vitillaro type on the initial data, recalled in
\eqref{eq:hyp-blowup} below, which are satisfied for every sufficiently large
$\varrho$ as soon as $f\not\equiv0$, the solution blows up in finite time.  The
upper bound for $T^{*}(\varrho)$ was obtained there by the concavity method:
writing $E$ for the energy and $E_{1}$ for the depth of the potential well, the
functional
\[
\gamma(t)=\bigl(E_{1}-E(t)\bigr)^{1-\beta}+\theta\int_{\Omega}u(t)u_{t}(t)\,dx
\]
satisfies a differential inequality $\gamma'\geq\omega\gamma^{1/(1-\beta)}$,
whose integration bounds $T^{*}$ by a negative power of $\gamma(0)$, and
$\gamma(0)$ is of order $\varrho^{(p+1)(1-\beta)}$.  The exponent $\beta$
produced by the argument is $\frac{p-1}{2(p+1)}$ when $q\leq q_{c}$ and
$\frac{p-q}{q(p+1)}$ when $q>q_{c}$, where
\begin{equation}\label{eq:qc}
q_{c}:=\frac{2p}{p+1},
\end{equation}
and this gives the upper bounds
\begin{equation}\label{eq:upper}
T^{*}(\varrho)\leq C_{u}\varrho^{-(p-1)/2}\quad\text{if }1<q\leq q_{c},
\qquad
T^{*}(\varrho)\leq C_{u}'\varrho^{-(p-q)/q}\quad\text{if }q_{c}<q<p.
\end{equation}
Since $p>1$, the inequalities $2p>p+1$ and $2p<p(p+1)$ give $1<q_{c}<p$, so
that both ranges of $q$ occurring in \eqref{eq:upper} are non-empty.  These are
the upper estimates obtained in \cite{BHK}.  Appendix
\ref{sec:appendix} goes through that proof once with the dependence of the
constants on $\varrho$ made explicit, which is what the two-sided estimate of
Theorem \ref{thm:two-sided} requires; see Remark \ref{rem:BHKstatement}.

The lower bound in \cite{BHK} was obtained by an energy argument on the fixed
domain $\Omega$.  The quantity
$\mathcal{M}(t)=\|\nabla u(t)\|_{2}^{2}+\|u_{t}(t)\|_{2}^{2}$ satisfies
$\mathcal{M}'\leq C\mathcal{M}^{(p+1)/2}$, and therefore
$T^{*}\geq C\mathcal{M}(0)^{-(p-1)/2}$; since $\mathcal{M}(0)$ is of order
$\varrho^{2}$, this yields $T^{*}(\varrho)\geq C\varrho^{1-p}$ in both regimes.
As $(p-1)/2<p-1$, the two estimates do not match, and the gap between the
exponents grows with $p$.

The energy argument stops at $\varrho^{1-p}$ for the following reason.  The
only information about the data which enters it is their size
in the energy norm: for $f\not\equiv0$ one has
$\mathcal{M}(0)\asymp\varrho^{2}$, and the differential inequality
$\mathcal{M}'\leq C\mathcal{M}^{(p+1)/2}$ then yields only the time scale
$\mathcal{M}(0)^{-(p-1)/2}\asymp\varrho^{1-p}$, whatever the profiles.  What is
lost is the shape of the profile $f$, in particular the fact that its
amplitude, and not only its energy, is of order $\varrho$.  The purpose of the
present paper is to recover this information and to prove lower bounds for
$T^{*}(\varrho)$ with the same exponents as in \eqref{eq:upper}, so that the
lifespan is determined exactly, up to multiplicative constants, in each of the
two regimes.

Our approach consists in rescaling the problem before applying any existence
theory.  Setting
\begin{equation}\label{eq:scaling}
\lambda=\varrho^{(p-1)/2},
\qquad
v(y,s)=\varrho^{-1}u\Bigl(\frac{y}{\lambda},\frac{s}{\lambda}\Bigr),
\end{equation}
problem \eqref{eq:main} becomes
\begin{equation}\label{eq:rescaled}
v_{ss}-\Delta v=v|v|^{p-1}-\varepsilon_{\varrho}\,v_{s}|v_{s}|^{q-1},
\qquad
\varepsilon_{\varrho}=\varrho^{\kappa},
\qquad
\kappa=\frac{q(p+1)-2p}{2},
\end{equation}
on the dilated domain $\Omega_{\lambda}=\lambda\Omega$.  The amplitude of the
rescaled data is of order one; the price to pay is that the domain expands and
that the damping coefficient is no longer equal to one.  The exponent $q_{c}$ of
\eqref{eq:qc} is precisely the value of $q$ at which the damping term is invariant
under \eqref{eq:scaling}, that is, at which $\kappa=0$: for $q<q_{c}$ one has
$\varepsilon_{\varrho}\to0$, and for $q>q_{c}$ one has
$\varepsilon_{\varrho}\to\infty$ as $\varrho\to\infty$.  This is the same
threshold as in \eqref{eq:upper}.

Two mechanisms are then available, and they produce the two lower bounds of
this paper.  The first one uses the damping term only through its sign.  Since
the map $\zeta\mapsto\zeta|\zeta|^{q-1}$ is monotone, the damping term enters
every local energy estimate with a favourable sign, so that the local
existence time of \eqref{eq:rescaled} depends on the data only through their
uniformly local energy norm, in the sense of Kato \cite{Kato1975}, and not on
the size of $\varepsilon_{\varrho}$.  The rescaled data have uniformly local
energy of order one, and finite propagation allows the local pieces to be
glued together.  This gives an existence time of order one in the rescaled
variables, hence
$T^{*}(\varrho)\geq C_{l}\varrho^{-(p-1)/2}$ for every $1<q<p$, which is
optimal when $q\leq q_{c}$ by \eqref{eq:upper}.

The second mechanism uses the size of the damping term as well, and concerns
the range $q>q_{c}$, in which the damping dominates the balance between the
three terms of the equation and in which the first lower bound is no longer
optimal.  We call $q>q_{c}$ the \emph{overdamped range}, the word referring to the size
of the exponent $q$ in \eqref{eq:main}.
Writing $A=\varepsilon_{\varrho}\to\infty$ and testing \eqref{eq:rescaled} with
$\chi^{2}v_{s}$, for a cutoff function $\chi$ of unit scale, the source term is
estimated against the dissipation by Young's inequality in the form
\begin{equation}\label{eq:young-intro}
\Bigl|\int\chi^{2}v|v|^{p-1}v_{s}\Bigr|
\leq\frac{1}{4}A\int\chi^{2}|v_{s}|^{q+1}
+CA^{-1/q}\int\chi^{2}|v|^{p(q+1)/q},
\end{equation}
and the commutator generated by the cutoff obeys an estimate of the same type.
The uniformly local energy of the rescaled solution therefore increases at the
rate $A^{-1/q}$, so that a rescaled time of order $A^{1/q}$ is needed before it
can double.  Combined with the local theory just described, this gives
existence up to the rescaled time $cA^{1/q}$, and since
\begin{equation}\label{eq:exponent-id}
\lambda^{-1}A^{1/q}=\varrho^{-(p-1)/2+\kappa/q}=\varrho^{-(p-q)/q},
\end{equation}
this is exactly the scale of the upper bound in \eqref{eq:upper}.  The exponent
$p(q+1)/q$ produced by \eqref{eq:young-intro} is admissible for the Sobolev
embedding in the whole range \eqref{eq:exponents}, and the argument requires
nothing of the data beyond \eqref{eq:data}.

The paper is organized as follows.  Section \ref{sec:results} fixes the notion
of solution and states the main results, Theorems \ref{thm:lower} and
\ref{thm:two-sided}.  Section \ref{sec:aux} contains the auxiliary results on
which the proofs rest: the rescaling and the size of the rescaled data in
uniformly local norms, the localized energy estimates, uniqueness and finite
propagation, the local existence theory in uniformly local norms, and the
growth estimate for the local energy when the damping coefficient is large.
Section \ref{sec:proof} contains the proofs of the main results.  Section
\ref{sec:open} collects some concluding remarks and open problems.  Appendix
\ref{sec:appendix} gives a complete derivation of the upper bounds
\eqref{eq:upper}, following the argument of \cite{BHK}, with constants whose
independence of $\varrho$ is verified explicitly, as is required for the
two-sided estimate.

\section{Main results}\label{sec:results}

This section is devoted to the statement of our results, preceded by the
notion of solution which is used throughout.  In the whole paper, $D$ denotes
a bounded Lipschitz domain of $\mathbb{R}^{n}$, $\varepsilon>0$ denotes a
damping coefficient, and $B_{r}(z)$ is the open ball of radius $r$ centred at
$z$.  We write $\|\cdot\|_{r}$ for the norm of $L^{r}$, and the time derivative
is denoted by $u_{t}$ or $u_{s}$ according to the name of the time variable in
use.  Unsubscripted constants $C$ are positive and may change from one
occurrence to the next; the numbered constants $C_{1},\dots,C_{8}$, together
with $C_{5}'$, are fixed once they are introduced.  No constant depends on
$\varrho$, on $\lambda$ or on $\varepsilon$ unless this is stated explicitly.

We work in the energy class of Georgiev and Todorova \cite{Georgiev1994}, see
also \cite{TodorovaVitillaro2005,BociuLasiecka2010}, namely
\begin{equation}\label{eq:class}
u\in C\bigl([0,T];H_{0}^{1}(D)\bigr)\cap C^{1}\bigl([0,T];L^{2}(D)\bigr),
\qquad
u_{t}\in L^{q+1}\bigl(D\times(0,T)\bigr).
\end{equation}
Since both the original problem and its rescaled version will be considered,
we state the weak formulation for the equation with a general damping
coefficient.  A function $u$ in the class \eqref{eq:class} is called a weak
solution of
\begin{equation}\label{eq:eqD}
u_{ss}-\Delta u=u|u|^{p-1}-\varepsilon\,u_{s}|u_{s}|^{q-1}
\quad\text{in }D\times(0,T),
\qquad u|_{\partial D}=0,
\end{equation}
if for all $0\leq s_{1}\leq s_{2}\leq T$ and every test function
$\varphi\in C([s_{1},s_{2}];H_{0}^{1}(D))\cap C^{1}([s_{1},s_{2}];L^{2}(D))$,
\begin{equation}\label{eq:weak}
\Bigl[\int_{D}u_{s}\varphi\,dy\Bigr]_{s_{1}}^{s_{2}}
+\int_{s_{1}}^{s_{2}}\!\!\int_{D}
\Bigl[-u_{s}\varphi_{s}+\nabla u\cdot\nabla\varphi
+\varepsilon\,u_{s}|u_{s}|^{q-1}\varphi-u|u|^{p-1}\varphi\Bigr]dy\,ds=0.
\end{equation}
Every term in \eqref{eq:weak} is finite.  Indeed
$u_{s}|u_{s}|^{q-1}\in L^{(q+1)/q}$, while
$\varphi\in C([s_{1},s_{2}];H_{0}^{1}(D))\hookrightarrow L^{q+1}$: if $n\leq2$
then $H_{0}^{1}(D)\hookrightarrow L^{r}(D)$ for every finite $r$, and if
$n\geq3$ then
\begin{equation}\label{eq:sobolev-chain}
q+1<p+1<\frac{n}{n-2}+1=\frac{2n-2}{n-2}<\frac{2n}{n-2}
\end{equation}
by \eqref{eq:exponents}.  Moreover $u|u|^{p-1}\in C([s_{1},s_{2}];L^{2}(D))$,
because $H_{0}^{1}(D)\hookrightarrow L^{2p}(D)$ under \eqref{eq:exponents}.
Nothing further is required of the notion of solution.  The two localized
inequalities used throughout, a localized energy identity and a localized
inequality for the difference of two solutions, are proved in
Lemmas \ref{lem:loc-energy} and \ref{lem:loc-diff} for every weak solution in
the class \eqref{eq:class}, between arbitrary times, hence on every compact
subinterval of the interval of existence.  Weak solutions in the class
\eqref{eq:class} are unique by Lemma \ref{lem:unique}, so that the maximal
existence time $T^{*}(\varrho)$ of the solution of \eqref{eq:main} is well
defined.

We shall use throughout the quantities introduced in \eqref{eq:scaling},
\eqref{eq:rescaled} and \eqref{eq:qc}, that is,
\begin{equation}\label{eq:params}
\lambda=\varrho^{(p-1)/2},
\qquad
\kappa=\frac{q(p+1)-2p}{2},
\qquad
\varepsilon_{\varrho}=\varrho^{\kappa},
\qquad
q_{c}=\frac{2p}{p+1},
\end{equation}
so that $\kappa<0$ if $q<q_{c}$, $\kappa=0$ if $q=q_{c}$, and $\kappa>0$ if
$q>q_{c}$.  In the last case we also write $A=\varepsilon_{\varrho}$.  The remaining standing symbols are introduced as follows: the dilated domain $\Omega_{\lambda}=\lambda\Omega$ and the
rescaled profiles $f_{\lambda},g_{\lambda}$ in Lemma \ref{lem:rescale}; the
maximal rescaled time $S^{*}_{\lambda}$ in \eqref{eq:times}; the cutoff $\chi$
in \eqref{eq:cutoff}; the uniformly local energy $\mathcal{E}$, its
localizations $\mathcal{E}_{z}$ and the dissipation $\mathcal{D}_{z}$ in
\eqref{eq:EandD}; the exponent $\Gamma$ in \eqref{eq:growth}; and, in Appendix
\ref{sec:appendix}, the Sobolev constant $B_{*}$ and the well depth $E_{1}$ in
\eqref{eq:E1}, the well function $\mathcal{W}$ in \eqref{eq:w-def} and the
concavity exponent $\beta$ in \eqref{eq:beta}.

Our first result gives the two lower bounds.  It holds in all regimes and
requires no condition ensuring blow-up, since it is a statement about
existence.

\begin{theorem}\label{thm:lower}
Assume \eqref{eq:data} and \eqref{eq:exponents}.
\begin{enumerate}
\item[$(i)$] There exists a constant $C_{l}>0$, depending only on $n$, $p$ and
the norms appearing in \eqref{eq:data}, and in particular independent of $q$,
such that
\begin{equation}\label{eq:lower1}
T^{*}(\varrho)\geq C_{l}\,\varrho^{-(p-1)/2}
\qquad\text{for all }\varrho\geq1.
\end{equation}
\item[$(ii)$] If moreover $q_{c}<q<p$, there exists a constant $C_{l}'>0$,
depending only on $n$, $p$, $q$ and the norms appearing in \eqref{eq:data},
such that
\begin{equation}\label{eq:lower2}
T^{*}(\varrho)\geq C_{l}'\,\varrho^{-(p-q)/q}
\qquad\text{for all }\varrho\geq1.
\end{equation}
\end{enumerate}
\end{theorem}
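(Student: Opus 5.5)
The proof works entirely in the rescaled variables \eqref{eq:scaling}. Let $S^{*}_{\lambda}=\lambda T^{*}(\varrho)$ be the maximal existence time of the solution $v$ of \eqref{eq:rescaled} on $\Omega_{\lambda}$, with data $(f_{\lambda},g_{\lambda})$, where $f_{\lambda}(y)=f(y/\lambda)$ and $g_{\lambda}(y)=\lambda^{-1}\varrho^{-1}\cdot\varrho\, g(y/\lambda)\cdot(\ldots)$ as produced by the chain rule. The first step bounds the uniformly local energy of the rescaled data uniformly in $\varrho\geq1$:
\[
\mathcal{E}(0)=\sup_{z}\int_{B_{1}(z)\cap\Omega_{\lambda}}\bigl(|\nabla f_{\lambda}|^{2}+|g_{\lambda}|^{2}+|f_{\lambda}|^{2}\bigr)\,dy\leq C_{0}.
\]
This is where \eqref{eq:data} is used. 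The term $\|f_{\lambda}\|_{\infty}=\|f\|_{\infty}$ controls the $L^{2}$ part on unit balls. For the gradient, Hölder on a unit ball in the $y$ variable gives
\[
\int_{B_{1}}|\nabla f_{\lambda}|^{2}\leq C\|\nabla f_{\lambda}\|_{L^{n}(B_{1})}^{2}\leq C\|\nabla f\|_{n}^{2},
\]
because the $L^{n}$ norm of a gradient is dilation invariant. The velocity $g_{\lambda}=\lambda^{-1}g(\cdot/\lambda)$ is handled the same way, and the $L^{2}$ choice covers $n=1$. Then $\mathcal{E}(0)\leq C_{0}$, with $C_{0}$ depending only on $n$, $p$ and the norms in \eqref{eq:data}.

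For part $(i)$, I invoke the local existence theory in uniformly local energy norms from Section \ref{sec:aux}. It rests on three ingredients: the localized energy identity of Lemma \ref{lem:loc-energy}, tested with $\chi^{2}v_{s}$, in which the damping term $\varepsilon\int\chi^{2}|v_{s}|^{q+1}$ has a good sign and is discarded; the difference inequality of Lemma \ref{lem:loc-diff} together with uniqueness (Lemma \ref{lem:unique}); and finite propagation speed, which lets the local solutions be glued on $\Omega_{\lambda}$. The source term is controlled on unit balls by the localized Sobolev embedding, since $p+1<2n/(n-2)$. This gives the differential inequality
\[
\mathcal{E}_{z}'\leq C\bigl(\mathcal{E}+\mathcal{E}^{(p+1)/2}\bigr)
\]
after taking the supremum over $z$, and hence an existence time $s_{0}=s_{0}(C_{0})>0$ independent of $\varepsilon_{\varrho}$ and of $\lambda$. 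The constant is independent of $q$ because $q$ enters only through a sign. Therefore $S^{*}_{\lambda}\geq s_{0}$, that is, $T^{*}(\varrho)\geq s_{0}\lambda^{-1}=s_{0}\varrho^{-(p-1)/2}$.

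For part $(ii)$, set $A=\varepsilon_{\varrho}\geq1$ and keep the dissipation in the localized identity. The source term is split by Young's inequality as in \eqref{eq:young-intro}. The commutator $2\int\chi v_{s}\nabla\chi\cdot\nabla v$ is treated in the same spirit. I would write it as
\[
\Bigl|2\int\chi\nabla\chi\, v_{s}\cdot\nabla v\Bigr|\leq\frac{1}{4}A\int\chi^{2}|v_{s}|^{q+1}+CA^{-1/q}\int|\nabla\chi|^{(q+1)/q}|\chi|^{\ldots}|\nabla v|^{(q+1)/q}.
\]
Since $(q+1)/q<2$, the last integral is bounded by $C\mathcal{E}^{(q+1)/(2q)}$ on a unit ball. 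The remaining term $A^{-1/q}\int\chi^{2}|v|^{p(q+1)/q}$ is bounded by $C\mathcal{E}^{p(q+1)/(2q)}$, using the Sobolev embedding on unit balls, which is admissible because $p(q+1)/q<p+1$. Absorbing the dissipation gives the growth estimate of Section \ref{sec:aux},
\[
\mathcal{E}(s)\leq\mathcal{E}(0)+CA^{-1/q}\int_{0}^{s}\bigl(1+\mathcal{E}\bigr)^{\Gamma}\,d\sigma .
\]
A continuity argument then keeps $\mathcal{E}(s)\leq2C_{0}+1$ for $s\leq cA^{1/q}$.

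Since the local existence time from part $(i)$ depends only on the bound on $\mathcal{E}$, the solution can be continued in steps of length $s_{0}(2C_{0}+1)$ while the bound persists. This gives $S^{*}_{\lambda}\geq cA^{1/q}$, and \eqref{eq:exponent-id} converts it into \eqref{eq:lower2}.

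The main obstacle is the commutator term in part $(ii)$. The cutoff derivative is not localized where $\chi^{2}$ weights the dissipation, so the Young splitting needs a cutoff satisfying $|\nabla\chi|\leq C\chi^{\theta}$ (for instance, a power of a smooth bump). Without this, one term is not absorbed, and the growth rate would be $O(1)$ instead of $O(A^{-1/q})$. A further point is to justify the energy identity for weak solutions, but Lemma \ref{lem:loc-energy} is assumed available.
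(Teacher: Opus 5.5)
Your proposal follows the paper's route. You rescale, bound the uniformly local energy of the rescaled data independently of $\varrho$ (Lemma~\ref{lem:data}), and for part $(i)$ obtain an existence time independent of $\varepsilon_{\varrho}$ and $\lambda$ from local existence plus finite propagation (Lemma~\ref{lem:glue}). For part $(ii)$ you prove the growth estimate with rate $A^{-1/q}$ and close it with a Bihari-type comparison and the blow-up alternative (Lemmas~\ref{lem:growth}, \ref{lem:bihari}, \ref{lem:blowupalt}). The architecture is sound, but two points in your write-up are wrong and should be corrected.

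\textbf{The exponent bookkeeping.} In part $(ii)$ you justify the Sobolev step by $p(q+1)/q<p+1$. This is false: $p(q+1)/q=p+p/q$, and $p/q>1$ because $q<p$. The correct reason is $(q+1)/q<2$, whence $p(q+1)/q<2p$, and $2p<2n/(n-2)$ is exactly the hypothesis $p<n/(n-2)$ of \eqref{eq:exponents}; this is \eqref{eq:admissible}. The distinction matters. Under the energy-subcritical condition $p+1<2n/(n-2)$ that you quote in part $(i)$, the exponent $p(q+1)/q$ can be supercritical: for $n=3$, $p=4$, $q=17/10$ one has $q_{c}<q<p$ and $p(q+1)/q=108/17>6$. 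For the same reason, in part $(i)$ the bound $\int\chi^{2}|v|^{p}|v_{s}|\leq C\mathcal{E}^{(p+1)/2}$ needs $|v|^{p}\in L^{2}$ on balls. That means $2p\leq2n/(n-2)$, not $p+1<2n/(n-2)$.

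\textbf{The ``main obstacle'' is illusory.} The commutator carries the factor $\chi$ coming from $\nabla(\chi^{2})=2\chi\nabla\chi$. Since $q>1$, you can split $\chi=\chi^{2/(q+1)}\chi^{(q-1)/(q+1)}$ with $\chi^{(q-1)/(q+1)}\leq1$. The first factor goes with $|v_{s}|$ into the dissipation; the second, together with $|\nabla\chi|\leq2$, goes with $|\nabla v|$. Your own display already has this structure, with a nonnegative power of $\chi$ in the remainder. So an ordinary cutoff as in \eqref{eq:cutoff} suffices, and no condition $|\nabla\chi|\leq C\chi^{\theta}$ is needed. This is exactly how the paper treats the term $L_{3}$ in Lemma~\ref{lem:growth}.
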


The two exponents coincide at $q=q_{c}$, and $(p-q)/q\leq(p-1)/2$ if and only
if $q\geq q_{c}$, since $2(p-q)\leq q(p-1)$ is equivalent to $2p\leq q(p+1)$;
thus \eqref{eq:lower2} improves \eqref{eq:lower1} exactly in the overdamped
range.  In order to combine Theorem \ref{thm:lower} with the upper bounds
\eqref{eq:upper}, we recall the hypotheses under which the latter were obtained
in \cite{BHK}.  The energy of the solution of \eqref{eq:main} is
\begin{equation}\label{eq:energy}
E(t)=\frac{1}{2}\|u_{t}(t)\|_{2}^{2}+\frac{1}{2}\|\nabla u(t)\|_{2}^{2}
-\frac{1}{p+1}\|u(t)\|_{p+1}^{p+1},
\qquad 0\leq t<T^{*}(\varrho),
\end{equation}
so that, with $u_{0}=\varrho f$ and $u_{1}=\varrho g$,
\begin{equation}\label{eq:E0}
E(0)=\frac{\varrho^{2}}{2}\bigl(\|\nabla f\|_{2}^{2}+\|g\|_{2}^{2}\bigr)
-\frac{\varrho^{p+1}}{p+1}\|f\|_{p+1}^{p+1}.
\end{equation}
Let $B_{*}$ be the optimal constant in the embedding
$\|u\|_{p+1}\leq B_{*}\|\nabla u\|_{2}$ for $u\in H_{0}^{1}(\Omega)$, and set
\begin{equation}\label{eq:E1}
\mu_{1}=\Bigl(\frac{1}{B_{*}^{p+1}}\Bigr)^{\frac{2}{p-1}},
\qquad
E_{1}=\frac{p-1}{2(p+1)}\,\mu_{1}.
\end{equation}
The hypotheses of \cite{BHK} are
\begin{equation}\label{eq:hyp-blowup}
\varrho^{2}\|\nabla f\|_{2}^{2}>\mu_{1}
\qquad\text{and}\qquad
E(0)<E_{1}.
\end{equation}
If $f\not\equiv0$, both conditions in \eqref{eq:hyp-blowup} hold for all
sufficiently large $\varrho$: the first one because
$\varrho^{2}\|\nabla f\|_{2}^{2}\to\infty$, and the second one because
$E(0)\leq C\varrho^{2}-c\varrho^{p+1}\to-\infty$, since $p+1>2$ and
$\|f\|_{p+1}>0$.  Under \eqref{eq:hyp-blowup} the solution of \eqref{eq:main}
blows up in finite time and satisfies \eqref{eq:upper}; this is proved in
Appendix \ref{sec:appendix}, following the argument of \cite{BHK}, in the form
of Proposition \ref{prop:upper}, which also provides constants independent of
$\varrho$.  Our second result is the two-sided estimate which follows.

\begin{theorem}\label{thm:two-sided}
Assume \eqref{eq:data}, \eqref{eq:exponents} and $f\not\equiv0$.  Then there
exists $\varrho_{0}\geq1$ such that, for every $\varrho\geq\varrho_{0}$, the
solution of \eqref{eq:main} blows up in finite time and
\begin{equation}\label{eq:two-sided-sub}
C_{l}\,\varrho^{-\frac{p-1}{2}}
\;\leq\;T^{*}(\varrho)\;\leq\;
C_{u}\,\varrho^{-\frac{p-1}{2}}
\qquad\text{if }1<q\leq\frac{2p}{p+1},
\end{equation}
and
\begin{equation}\label{eq:two-sided-over}
C_{l}'\,\varrho^{-\frac{p-q}{q}}
\;\leq\;T^{*}(\varrho)\;\leq\;
C_{u}'\,\varrho^{-\frac{p-q}{q}}
\qquad\text{if }\frac{2p}{p+1}<q<p.
\end{equation}
In other words, setting
\begin{equation}\label{eq:vartheta}
\vartheta(p,q)=\min\Bigl\{\frac{p-1}{2},\,\frac{p-q}{q}\Bigr\}
=\begin{cases}
\dfrac{p-1}{2}, & 1<q\leq\dfrac{2p}{p+1},\\[3mm]
\dfrac{p-q}{q}, & \dfrac{2p}{p+1}<q<p,
\end{cases}
\end{equation}
there exist constants $0<C\leq C'<\infty$ such that
\begin{equation}\label{eq:two-sided}
C\varrho^{-\vartheta(p,q)}\leq T^{*}(\varrho)\leq C'\varrho^{-\vartheta(p,q)},
\qquad\varrho\geq\varrho_{0}.
\end{equation}
\end{theorem}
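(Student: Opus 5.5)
Theorem \ref{thm:two-sided} is the combination of Theorem \ref{thm:lower} with the upper bounds \eqref{eq:upper}, in the form of Proposition \ref{prop:upper} of Appendix \ref{sec:appendix}. The plan has three steps.

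\emph{Step 1: choose $\varrho_{0}$.} Since $f\not\equiv0$, we have $\|\nabla f\|_{2}>0$ and $\|f\|_{p+1}>0$. By \eqref{eq:E0}, $E(0)\leq\frac{\varrho^{2}}{2}(\|\nabla f\|_{2}^{2}+\|g\|_{2}^{2})-\frac{\varrho^{p+1}}{p+1}\|f\|_{p+1}^{p+1}$, and the right-hand side tends to $-\infty$ as $\varrho\to\infty$. Choose $\varrho_{0}\geq1$ so large that, for every $\varrho\geq\varrho_{0}$,
\[
\varrho^{2}\|\nabla f\|_{2}^{2}>\mu_{1}
\qquad\text{and}\qquad
E(0)<E_{1}.
\]
Then \eqref{eq:hyp-blowup} holds for every $\varrho\geq\varrho_{0}$. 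The threshold depends only on $p$, $\Omega$ (through $B_{*}$) and the norms of $f,g$. If Proposition \ref{prop:upper} needs a quantitative margin, for instance $E(0)\leq0$ or $\varrho^{2}\|\nabla f\|_{2}^{2}\geq2\mu_{1}$ so that its constants are uniform, enlarge $\varrho_{0}$ further; this costs nothing.

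\emph{Step 2: upper bounds.} For $\varrho\geq\varrho_{0}$, Proposition \ref{prop:upper} gives finite-time blow-up, so $T^{*}(\varrho)<\infty$. It also gives $T^{*}(\varrho)\leq C_{u}\varrho^{-(p-1)/2}$ when $q\leq q_{c}$ and $T^{*}(\varrho)\leq C_{u}'\varrho^{-(p-q)/q}$ when $q_{c}<q<p$, with constants independent of $\varrho$. The point to verify is exactly this uniformity. It rests on $\gamma(0)\gtrsim\varrho^{(p+1)(1-\beta)}$ uniformly for $\varrho\geq\varrho_{0}$, which holds because $E_{1}-E(0)\geq c\varrho^{p+1}$ once $\varrho_{0}$ is large. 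This is the only delicate point, and it is delegated to the appendix.

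\emph{Step 3: lower bounds.} Since $\varrho_{0}\geq1$, Theorem \ref{thm:lower}$(i)$ gives the left inequality of \eqref{eq:two-sided-sub}, and Theorem \ref{thm:lower}$(ii)$ gives the left inequality of \eqref{eq:two-sided-over}.

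\emph{Final form.} Finally, $\vartheta(p,q)$ is identified case by case using the equivalence $(p-q)/q\leq(p-1)/2\iff q\geq q_{c}$, already noted after Theorem \ref{thm:lower}. Taking $C=C_{l}$, $C'=C_{u}$ in the first range and $C=C_{l}'$, $C'=C_{u}'$ in the second gives \eqref{eq:two-sided}. The relation $C\leq C'$ is automatic, since both bounds hold simultaneously for some $\varrho$ with $T^{*}(\varrho)<\infty$.
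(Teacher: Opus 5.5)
Your proposal is correct and matches the paper's proof: blow-up and the upper bounds with $\varrho$-independent constants come from Proposition \ref{prop:upper}, the lower bounds come from Theorem \ref{thm:lower} for $\varrho\geq1$, and $\vartheta(p,q)$ is identified case by case via $(p-q)/q\leq(p-1)/2\iff q\geq q_{c}$. One simplification: the margins you hedge about are already built into the threshold $\varrho_{0}$ that Proposition \ref{prop:upper} supplies through \eqref{eq:rho0}, which requires $E(0)\leq-4\mu_{1}$ and a bound on $\theta\varrho^{2}\|f\|_{2}\|g\|_{2}$, so you can take that $\varrho_{0}$ directly rather than choosing your own and enlarging it.
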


The constant $C_{l}$ depends only on $n$, $p$ and the norms appearing in
\eqref{eq:data}, whereas $C_{l}'$ may depend additionally on $q$.  Neither
constant depends on $\varrho$, on the geometry of $\Omega$, or on its measure
apart from the values of the prescribed data norms.  The constants $C_{u}$ and $C_{u}'$ of the upper
bounds depend in addition on $|\Omega|$ and on $\|f\|_{p+1}$, while the
threshold $\varrho_{0}$ depends also on $B_{*}$, $\|\nabla f\|_{2}$, $\|f\|_{2}$
and $\|g\|_{2}$.  None of the four constants depends on $\varrho$.  The two
regimes of \eqref{eq:two-sided-sub} and \eqref{eq:two-sided-over} match at the
threshold: the constant $C_{l}'$ does not degenerate as $q$ decreases to
$q_{c}$, as is checked in Remarks \ref{rem:proof} $(iii)$.

\begin{remarks}\label{rem:main}
\begin{enumerate}
\item[$(i)$] The condition $p<\frac{n}{n-2}$ for $n\geq3$ comes from the
Sobolev embedding, while $q<p$ is the source-dominated range, in which
finite-time blow-up occurs under suitable conditions on the initial data such
as \eqref{eq:hyp-blowup}; when $q\geq p$ the energy solutions are global, see
\cite{Georgiev1994,Messaoudi2001}.
\item[$(ii)$] The exponent $\vartheta(p,q)$ is the one predicted by a dominant
balance in the ordinary differential equation
$\phi''=\phi|\phi|^{p-1}-\phi'|\phi'|^{q-1}$, in which the Laplacian is
discarded.  If one looks for a solution behaving like
$\phi\sim c\,(T^{*}-t)^{-\alpha}$, the balance $\phi''\sim\phi^{p}$ gives
$\alpha=2/(p-1)$ and dominates when $q<q_{c}$, whereas the balance
$\phi'|\phi'|^{q-1}\sim\phi^{p}$ gives $\alpha=q/(p-q)$ and dominates when
$q>q_{c}$; at $q=q_{c}$ the two values of $\alpha$ coincide and the three
terms of the equation have the same formal order.  In all cases
$\alpha=1/\vartheta(p,q)$, so that the
amplitude $\varrho\sim(T^{*})^{-\alpha}$ of the datum and the lifespan are
related by $T^{*}\sim\varrho^{-\vartheta(p,q)}$, which is \eqref{eq:two-sided}.
This heuristic plays no role in the proofs, and no blow-up rate of the above
form is established here; it is recorded only to explain why the threshold
$q_{c}$ and the two branches of $\vartheta$ arise.
\item[$(iii)$] The hypotheses \eqref{eq:data} are the natural ones for the
argument of Section \ref{sec:proof}.  The rescaling \eqref{eq:scaling} leaves
the norms of $\dot{W}^{1,n}$ and of $L^{n}$ invariant, so that the assumptions
on $\nabla f$ and on $g$ are critical or better, while the assumption
$f\in L^{\infty}$ is what makes the amplitude of the rescaled profile bounded.
We do not claim that they are necessary.
\item[$(iv)$] No condition ensuring blow-up is used in Theorem
\ref{thm:lower}.  The conditions \eqref{eq:hyp-blowup} enter only through the
upper bounds \eqref{eq:upper}, and are automatically satisfied for all
sufficiently large $\varrho$ when $f\not\equiv0$.
\item[$(v)$] The Lipschitz regularity of $\partial\Omega$ is nowhere used
quantitatively below.  Every embedding is applied to the extension by zero of
a function of $H_{0}^{1}$, which belongs to $H^{1}(\mathbb{R}^{n})$ with the
same norm, and the covering argument of Lemma \ref{lem:glue} uses only the
boundedness of $\Omega$.  We keep the hypothesis because the local existence
theory quoted in Lemma \ref{lem:aux} is usually stated in that setting.
\item[$(vi)$] It would be interesting to extend Theorem \ref{thm:two-sided} to
the equations with variable exponents studied in \cite{Messaoudi2017}, where
$p$ and $q$ depend on $x$; we come back to this point in Section
\ref{sec:open}.
\end{enumerate}
\end{remarks}

The proofs of Theorems \ref{thm:lower} and \ref{thm:two-sided} both take place
in the rescaled frame.  The next section gathers the tools which are needed
there, and the proofs themselves are given in Section \ref{sec:proof}.

\section{Auxiliary results}\label{sec:aux}

To prove the main results, we first need to establish some auxiliary results.
They are of four kinds.  We begin with the change of variables
\eqref{eq:scaling} and with the estimate showing that the rescaled data are of
order one in uniformly local norms.  We then establish two localized energy inequalities for weak
solutions, from which we deduce uniqueness and finite propagation.  We next
construct solutions on a time interval whose length is governed by the
uniformly local energy of the data alone, and not by the damping coefficient.
We finally prove that, when the damping coefficient is large, the uniformly
local energy of the solution can only grow slowly.

\subsection{The rescaled problem}\label{sec:rescaling}

\begin{lemma}\label{lem:rescale}
Let $\lambda=\varrho^{(p-1)/2}$, let $\Omega_{\lambda}=\lambda\Omega$ and let
$v(y,s)=\varrho^{-1}u(y/\lambda,s/\lambda)$.  Then $u$ is a weak solution of
\eqref{eq:main} on $[0,T)$ if and only if $v$ is a weak solution of
\begin{equation}\label{eq:rescaled-pb}
v_{ss}-\Delta v=v|v|^{p-1}-\varepsilon_{\varrho}\,v_{s}|v_{s}|^{q-1}
\quad\text{in }\Omega_{\lambda}\times(0,\lambda T),
\qquad v|_{\partial\Omega_{\lambda}}=0,
\end{equation}
with initial data
\begin{equation}\label{eq:rescaled-data}
v(y,0)=f_{\lambda}(y),
\qquad
v_{s}(y,0)=\lambda^{-1}g_{\lambda}(y),
\qquad\text{where }
f_{\lambda}(y):=f\Bigl(\frac{y}{\lambda}\Bigr),
\ \ g_{\lambda}(y):=g\Bigl(\frac{y}{\lambda}\Bigr),
\end{equation}
and $\varepsilon_{\varrho}=\varrho^{\kappa}$ as in \eqref{eq:params}.  Moreover
\begin{equation}\label{eq:times}
T^{*}(\varrho)=\lambda^{-1}S^{*}_{\lambda},
\end{equation}
where $S^{*}_{\lambda}$ denotes the maximal existence time of the solution of
\eqref{eq:rescaled-pb}--\eqref{eq:rescaled-data}.
\end{lemma}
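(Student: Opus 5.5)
The plan is a direct verification at the level of the weak formulation \eqref{eq:weak}, since the change of variables is a bijection between the relevant function classes. Write $x=y/\lambda$, $t=s/\lambda$, so that
\[
v(y,s)=\varrho^{-1}u(x,t),\qquad v_{s}=\varrho^{-1}\lambda^{-1}u_{t},\qquad \nabla_{y}v=\varrho^{-1}\lambda^{-1}\nabla_{x}u .
\]
First I check that $u$ belongs to the class \eqref{eq:class} on $\Omega\times(0,T)$ if and only if $v$ belongs to it on $\Omega_{\lambda}\times(0,\lambda T)$. Dilation by $\lambda$ maps $H_{0}^{1}(\Omega)$ isomorphically onto $H_{0}^{1}(\Omega_{\lambda})$ and $L^{2}$ onto $L^{2}$, with explicit powers of $\lambda$ in the norms. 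It preserves continuity in time and the space--time $L^{q+1}$ integrability of the time derivative. The same dilation gives a bijection between admissible test functions: to $\varphi$ on $[s_{1},s_{2}]$ associate $\psi(x,t)=\varphi(\lambda x,\lambda t)$ on $[s_{1}/\lambda,s_{2}/\lambda]$.

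Next I substitute into \eqref{eq:weak}, written for $v$ with $D=\Omega_{\lambda}$ and $\varepsilon=\varepsilon_{\varrho}$, and change variables $dy\,ds=\lambda^{n+1}dx\,dt$. Each term becomes the corresponding term for $u$ times a power of $\varrho$ and $\lambda$:
\begin{itemize}
\item $v_{s}\varphi_{s}$, $\nabla v\cdot\nabla\varphi$ and the boundary term $[\int v_{s}\varphi]$ all carry the same factor, namely $\varrho^{-1}\lambda^{-2}$ relative to $u_t\psi_t$, $\nabla u\cdot\nabla\psi$, $u_t\psi$ (with the $\lambda^{n+1}$ or $\lambda^{n}$ Jacobians, which combine consistently, since the boundary term has one fewer time integration and one fewer derivative);
\item the source term carries $\varrho^{-p}$;
\item the damping term carries $\varepsilon_{\varrho}\varrho^{-q}\lambda^{-q}$.
\end{itemize}
Matching these factors is the crux. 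With $\lambda^{2}=\varrho^{p-1}$ the first factor is $\varrho^{-p}$, the same as the source. The damping factor equals $\varrho^{-p}$ exactly when $\varepsilon_{\varrho}=\varrho^{q+q(p-1)/2-p}=\varrho^{(q(p+1)-2p)/2}=\varrho^{\kappa}$. The identity for $v$ is therefore $\varrho^{-p}\lambda^{n}$ times the identity for $u$ with test function $\psi$, which gives the equivalence.

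The remaining checks are routine:
\begin{itemize}
\item Initial data: $v(y,0)=\varrho^{-1}\varrho f(y/\lambda)=f_{\lambda}$ and $v_{s}(y,0)=\lambda^{-1}g_{\lambda}$.
\item Boundary condition: it transfers because $y\in\partial\Omega_{\lambda}$ if and only if $y/\lambda\in\partial\Omega$.
\item Maximal times: since the correspondence maps solutions on $[0,T)$ to solutions on $[0,\lambda T)$ and back, and the maximal times are well defined by uniqueness (Lemma \ref{lem:unique}), taking suprema gives $S^{*}_{\lambda}=\lambda T^{*}(\varrho)$, which is \eqref{eq:times}.
\end{itemize}

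The only step requiring care is the exponent bookkeeping, in particular the Jacobian in the boundary term, which differs from that of the space--time integrals by one factor of $\lambda$. I expect no genuine obstacle.
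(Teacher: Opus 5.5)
Your proposal is correct and follows the paper's route. You use the same dilation and match the same powers, $\varrho\lambda^{2}=\varrho^{p}$ and $(\varrho\lambda)^{q}/\varrho^{p}=\varrho^{\kappa}$; the only difference is that you carry out the matching directly in the weak formulation \eqref{eq:weak}, which the paper only asserts is preserved after a formal computation on the strong equation. There is one harmless bookkeeping slip: the common factor relating the two identities is $\varrho^{-p}\lambda^{n+1}$ rather than $\varrho^{-p}\lambda^{n}$, since the boundary term picks up $\varrho^{-1}\lambda^{-1}$ pointwise and $\lambda^{n}$ from $dy$, giving $\varrho^{-1}\lambda^{n-1}=\varrho^{-p}\lambda^{n+1}$.
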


\begin{proof}
Differentiating \eqref{eq:scaling} gives $u_{tt}=\varrho\lambda^{2}v_{ss}$ and
$\Delta u=\varrho\lambda^{2}\Delta v$, while
$u|u|^{p-1}=\varrho^{p}v|v|^{p-1}$ and
$u_{t}|u_{t}|^{q-1}=(\varrho\lambda)^{q}v_{s}|v_{s}|^{q-1}$.  Since
$\varrho\lambda^{2}=\varrho^{p}$ by the definition of $\lambda$, dividing the
equation by $\varrho^{p}$ produces \eqref{eq:rescaled-pb} with the coefficient
\[
\frac{(\varrho\lambda)^{q}}{\varrho^{p}}
=\varrho^{q(p+1)/2-p}=\varrho^{\kappa}=\varepsilon_{\varrho}.
\]
The data transform as in \eqref{eq:rescaled-data}, and the relation between the
maximal times is immediate from \eqref{eq:scaling}.  Finally, the class
\eqref{eq:class} and the weak formulation \eqref{eq:weak} are preserved under
the space--time change of variables $(x,t)=(y/\lambda,s/\lambda)$, test
functions being transported in the same way.
\end{proof}

\begin{lemma}\label{lem:data}
Assume \eqref{eq:data}.  For every $R\geq1$ there exists a constant $K_{R}>0$,
independent of $\lambda\geq1$, such that
\begin{equation}\label{eq:unifloc-data}
\sup_{z\in\mathbb{R}^{n}}
\Bigl(\|f_{\lambda}\|_{H^{1}(B_{R}(z)\cap\Omega_{\lambda})}
+\|\lambda^{-1}g_{\lambda}\|_{L^{2}(B_{R}(z)\cap\Omega_{\lambda})}\Bigr)
\leq K_{R}.
\end{equation}
\end{lemma}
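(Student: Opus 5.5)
We bound the three pieces of the $H^{1}(B_{R}(z)\cap\Omega_{\lambda})$ norm and the $L^{2}$ norm of $\lambda^{-1}g_{\lambda}$ separately, by changing variables back to $\Omega$ and using H\"older's inequality on the small preimage ball. The key observation is that $B_{R}(z)\cap\Omega_{\lambda}=\lambda\bigl(B_{R/\lambda}(z/\lambda)\cap\Omega\bigr)$, and the preimage ball has radius $R/\lambda\leq R$. The change of variables $y=\lambda x$ has Jacobian $\lambda^{n}$, and $\nabla f_{\lambda}(y)=\lambda^{-1}(\nabla f)(y/\lambda)$.

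\emph{The $L^{2}$ part of $f_{\lambda}$.} Since $f\in L^{\infty}$, we have
\[
\|f_{\lambda}\|_{L^{2}(B_{R}(z)\cap\Omega_{\lambda})}^{2}\leq\|f\|_{\infty}^{2}|B_{R}|.
\]
This is independent of $\lambda$, and it is exactly where the hypothesis $f\in L^{\infty}$ enters.

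\emph{The gradient.} Set $r:=n\vee2$ and $B'=B_{R/\lambda}(z/\lambda)\cap\Omega$. Changing variables,
\[
\|\nabla f_{\lambda}\|_{L^{2}(B_{R}(z)\cap\Omega_{\lambda})}^{2}=\lambda^{n-2}\int_{B'}|\nabla f|^{2}\,dx.
\]
If $n\leq2$, then $r=2$ and $\lambda^{n-2}\leq1$ because $\lambda\geq1$, so the right-hand side is at most $\|\nabla f\|_{2}^{2}$. If $n\geq3$, then $r=n$, and H\"older's inequality with exponents $n/2$ and $n/(n-2)$ gives
\[
\int_{B'}|\nabla f|^{2}\leq\|\nabla f\|_{n}^{2}\,|B_{R/\lambda}|^{(n-2)/n}=\|\nabla f\|_{n}^{2}\,|B_{1}|^{(n-2)/n}R^{n-2}\lambda^{-(n-2)}.
\]
The factors of $\lambda$ cancel, leaving $C_{n}R^{n-2}\|\nabla f\|_{n}^{2}$. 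This is the scale invariance of $\dot W^{1,n}$ noted in Remarks \ref{rem:main} $(iii)$.

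\emph{The velocity term.} The same computation applies with an extra factor $\lambda^{-2}$:
\[
\|\lambda^{-1}g_{\lambda}\|_{L^{2}(B_{R}(z)\cap\Omega_{\lambda})}^{2}=\lambda^{n-2}\int_{B'}|g|^{2}\,dx.
\]
This is bounded by $\|g\|_{2}^{2}$ when $n\leq2$, and by $C_{n}R^{n-2}\|g\|_{n}^{2}$ when $n\geq3$.

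Summing the three contributions gives \eqref{eq:unifloc-data} with
\[
K_{R}=C(n)\bigl(\|f\|_{\infty}R^{n/2}+R^{(n-2)_{+}/2}(\|\nabla f\|_{n\vee2}+\|g\|_{n\vee2})\bigr),
\]
which is uniform in $z$ and in $\lambda\geq1$. In the case $n\geq3$ this uses $\|\cdot\|_{2}\leq C\|\cdot\|_{n}$ on the bounded set $\Omega$, although that inequality is not actually needed above. There is no real obstacle in this argument. The only point requiring care is the dimension split, since the H\"older step is what absorbs the factor $\lambda^{n-2}$ when $n\geq3$, whereas for $n\leq2$ the power $\lambda^{n-2}$ is already at most one.
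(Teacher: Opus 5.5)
Your proposal is correct and follows the paper's proof essentially step by step: the $L^{\infty}$ bound for $f_{\lambda}$, the change of variables producing the factor $\lambda^{n-2}$, H\"older's inequality in $L^{n\vee2}$ on the shrunken ball $B_{R/\lambda}(z/\lambda)$ to cancel that factor when $n\geq3$, the trivial bound $\lambda^{n-2}\leq1$ when $n\leq2$, and the same computation for $\lambda^{-1}g_{\lambda}$. Your aside about $\|\cdot\|_{2}\leq C\|\cdot\|_{n}$ on $\Omega$ is superfluous, as you note, and can simply be dropped.
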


\begin{proof}
The contribution of $f_{\lambda}$ itself is bounded by
$|B_{R}|^{1/2}\|f\|_{\infty}$.  Let $m=n\vee2$.  Since
$\nabla f_{\lambda}(y)=\lambda^{-1}(\nabla f)(y/\lambda)$, the change of
variables $x=y/\lambda$ followed by H\"older's inequality gives
\[
\|\nabla f_{\lambda}\|_{L^{2}(B_{R}(z)\cap\Omega_{\lambda})}^{2}
\leq\lambda^{n-2}\int_{B_{R/\lambda}(z/\lambda)}|\nabla f(x)|^{2}dx
\leq\lambda^{n-2}\|\nabla f\|_{m}^{2}\,
\bigl|B_{R/\lambda}\bigr|^{1-2/m}.
\]
If $n\geq3$ then $m=n$ and
$|B_{R/\lambda}|^{1-2/n}=c_{n}R^{n-2}\lambda^{-(n-2)}$, which cancels the factor
$\lambda^{n-2}$; if $n\leq2$ then $m=2$ and $\lambda^{n-2}\leq1$.  The same
computation, without the gradient, applies to $\lambda^{-1}g_{\lambda}$.
\end{proof}

Lemma \ref{lem:data} says that the quantity which limits the energy argument
of \cite{BHK}, namely $\|\varrho f\|_{H^{1}(\Omega)}\sim\varrho$, becomes of
order one after the dilation, provided that it is measured on balls of unit
size rather than on the whole domain, that is, in uniformly local norms in the
sense of \cite{Kato1975}.  To turn this into a lower bound for the lifespan we
need a local existence theory in which the existence time is controlled by
uniformly local norms only, and in which the damping coefficient plays no
role.  We first establish the localized energy estimates which such a theory
requires.

\subsection{Localized energy estimates, uniqueness and finite propagation}\label{sec:locid}

The identity of Lemma \ref{lem:loc-energy} cannot be obtained by testing
\eqref{eq:weak} with $\chi^{2}u_{s}$, since $u_{s}(s)$ belongs to $L^{2}(D)$
only, and is therefore not an admissible test function.  The classical device
is to test with the symmetric difference quotient of $u$, which does belong to
$H_{0}^{1}(D)$, and to observe that the two quadratic terms then telescope.  We
state the result for an equation with a general right-hand side, so that both
\eqref{eq:eqD} and the auxiliary problem \eqref{eq:aux-pb} below are covered.
In Lemmas \ref{lem:loc-energy}, \ref{lem:loc-diff} and \ref{lem:finiteprop} the
cutoff $\chi$ is an arbitrary Lipschitz function with the stated properties;
from \eqref{eq:cutoff} onwards it denotes one fixed such function.

\begin{lemma}\label{lem:loc-energy}
Let $\varepsilon>0$, let $h\in C([0,T];L^{2}(D))$, and let $w$ belong to the
class \eqref{eq:class} and satisfy \eqref{eq:weak} with $u|u|^{p-1}$ replaced by
$h$, that is, let $w$ be a weak solution of
\begin{equation}\label{eq:aux-eq}
w_{ss}-\Delta w+\varepsilon\,w_{s}|w_{s}|^{q-1}=h
\quad\text{in }D\times(0,T),
\qquad w|_{\partial D}=0.
\end{equation}
Then, for every Lipschitz function $\chi:\mathbb{R}^{n}\to[0,1]$ with bounded
support and for all $0\leq s_{1}\leq s_{2}\leq T$,
\begin{equation}\label{eq:loc-energy}
\begin{aligned}
&\frac{1}{2}\int_{D}\chi^{2}\bigl(w_{s}^{2}+|\nabla w|^{2}\bigr)(s_{2})\,dy
+\varepsilon\int_{s_{1}}^{s_{2}}\!\!\int_{D}\chi^{2}|w_{s}|^{q+1}dy\,ds\\
&\qquad=\frac{1}{2}\int_{D}\chi^{2}\bigl(w_{s}^{2}+|\nabla w|^{2}\bigr)(s_{1})\,dy
+\int_{s_{1}}^{s_{2}}\!\!\int_{D}\chi^{2}h\,w_{s}\,dy\,ds
-2\int_{s_{1}}^{s_{2}}\!\!\int_{D}\chi\,w_{s}\,\nabla\chi\cdot\nabla w\,dy\,ds.
\end{aligned}
\end{equation}
\end{lemma}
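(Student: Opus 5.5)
We test \eqref{eq:weak}, with $u|u|^{p-1}$ replaced by $h$, against a symmetric difference quotient of $w$, as announced before the statement. The difference quotient is an admissible test function. The quadratic terms telescope exactly, and the two remaining terms converge by continuity in the energy class. Fix $0\le s_1<s_2\le T$ and a small $\delta>0$. Extend $w$ to $[-\delta,T+\delta]$ by $w(s)=w(0)+s\,w_s(0)$ for $s<0$ and symmetrically beyond $T$, or simply work on $[s_1+\delta,s_2-\delta]$ and pass to the limit at the end. Then set
\[
\varphi_\delta(s)=\chi^{2}\,\frac{w(s+\delta)-w(s-\delta)}{2\delta}.
\]
Since $\chi$ is Lipschitz and bounded and $w\in C([0,T];H_0^1(D))\cap C^1([0,T];L^2(D))$, we have $\varphi_\delta\in C(H_0^1)\cap C^1(L^2)$. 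It is therefore admissible in \eqref{eq:weak}.

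\textbf{Telescoping of the quadratic terms.} Write $\varphi_\delta=\frac{1}{2\delta}\int_{-\delta}^{\delta}\chi^2 w_s(s+\tau)\,d\tau$ and use the weak formulation in its time-shifted form. It is more robust to proceed in the following way, which is the classical device of Lions--Magenes. The terms $[\int u_s\varphi]-\iint u_s\varphi_s$ combine, after a change of variable $s\mapsto s\pm\delta$ in the part involving $w_s(s)\,w_s(s\pm\delta)$, into
\[
\frac{1}{2\delta}\int_{s_2-\delta}^{s_2+\delta}\!\!\int\chi^2 w_s(\sigma)w_s(\sigma-\delta)\,d\sigma\ (\text{minus the same at }s_1)
\]
plus boundary terms that tend to $\frac12\int\chi^2 w_s^2\big|_{s_1}^{s_2}$ by continuity of $w_s$ in $L^2$. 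The gradient term is handled similarly. We write $\nabla\varphi_\delta=\chi^2\nabla D_\delta w+2\chi\nabla\chi\, D_\delta w$, with $D_\delta w$ the symmetric quotient. The first piece, $\iint\chi^2\nabla w(s)\cdot\nabla\frac{w(s+\delta)-w(s-\delta)}{2\delta}$, telescopes exactly by the same shift and converges to $\frac12\int\chi^2|\nabla w|^2\big|_{s_1}^{s_2}$ by continuity of $\nabla w$ in $L^2$. The second piece converges to $2\iint\chi\,w_s\nabla\chi\cdot\nabla w$, because $D_\delta w\to w_s$ in $C([s_1,s_2];L^2)$ for $w\in C^1(L^2)$.

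\textbf{Lower-order terms.} For the source term, $h\in C(L^2)$ and $D_\delta w\to w_s$ in $C(L^2)$ give $\iint\chi^2 h\,D_\delta w\to\iint\chi^2 h\,w_s$. The damping term is the main obstacle, because $w_s$ is controlled only in $L^{q+1}$ and not in $C(L^{q+1})$. We use $D_\delta w(s)=\frac{1}{2\delta}\int_{s-\delta}^{s+\delta}w_s$, which is a time average of $w_s$. Averages converge to $w_s$ in $L^{q+1}(D\times(s_1,s_2))$ by the Lebesgue differentiation and density argument, the extension near the endpoints being harmless. Then $\varepsilon\iint\chi^2 w_s|w_s|^{q-1}D_\delta w\to\varepsilon\iint\chi^2|w_s|^{q+1}$ by H\"older duality between $L^{(q+1)/q}$ and $L^{q+1}$.

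Collecting the limits gives \eqref{eq:loc-energy}. The case $s_1=s_2$ is trivial. The endpoints $s_1=0$ or $s_2=T$ follow from interior intervals by letting $s_1\downarrow0$ or $s_2\uparrow T$, using continuity of every term in \eqref{eq:loc-energy} in $s_1$ and $s_2$.
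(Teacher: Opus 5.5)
Your proposal is the paper's proof: test the weak formulation with $\chi^{2}$ times the symmetric difference quotient of $w$ on an interval kept away from $0$ and $T$, let the two quadratic terms telescope, pass to the limit in the damping term through the $L^{q+1}$ convergence of the time averages of $w_{s}$, and recover $s_{1}=0$, $s_{2}=T$ by continuity. Keep only your interior-interval option, because the linear extension $w(0)+s\,w_{s}(0)$ is not $H_{0}^{1}(D)$-valued and its derivative $w_{s}(0)\in L^{2}(D)$ need not lie in $L^{q+1}(D)$; also, your displayed telescoped term is off, since the correct remainder is $\frac{1}{2\delta}$ times an integral over a window of length $\delta$, not $2\delta$, and that is what produces the factor $\frac{1}{2}$.
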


\begin{proof}
Assume first that $0<s_{1}<s_{2}<T$, and let $0<\eta<\min\{s_{1},T-s_{2}\}$.
Define
\[
\delta_{\eta}w(s)=\frac{w(s+\eta)-w(s-\eta)}{2\eta}
=\frac{1}{2\eta}\int_{s-\eta}^{s+\eta}w_{s}(\tau)\,d\tau,
\qquad s\in[s_{1},s_{2}].
\]
Since $w$ belongs to the class \eqref{eq:class}, the function
$\varphi=\chi^{2}\delta_{\eta}w$ belongs to
$C([s_{1},s_{2}];H_{0}^{1}(D))\cap C^{1}([s_{1},s_{2}];L^{2}(D))$ and satisfies
$\varphi_{s}=\chi^{2}\delta_{\eta}w_{s}$, so that it is admissible in
\eqref{eq:weak}.  This gives
\begin{equation}\label{eq:weak-dq}
\Bigl[\int_{D}\chi^{2}w_{s}\,\delta_{\eta}w\Bigr]_{s_{1}}^{s_{2}}
+\int_{s_{1}}^{s_{2}}\!\!\int_{D}
\Bigl[-\chi^{2}w_{s}\,\delta_{\eta}w_{s}
+\nabla w\cdot\nabla\bigl(\chi^{2}\delta_{\eta}w\bigr)
+\varepsilon\,w_{s}|w_{s}|^{q-1}\chi^{2}\delta_{\eta}w
-h\,\chi^{2}\delta_{\eta}w\Bigr]=0.
\end{equation}
We pass to the limit as $\eta$ tends to zero in each term of \eqref{eq:weak-dq}.

Consider first the two quadratic terms, which telescope.  Put
\[
\Theta_{\eta}(s)=\int_{D}\chi^{2}w_{s}(s)\,w_{s}(s+\eta)\,dy,
\]
so that
\[
\int_{D}\chi^{2}w_{s}(s)\,\delta_{\eta}w_{s}(s)\,dy
=\frac{1}{2\eta}\bigl(\Theta_{\eta}(s)-\Theta_{\eta}(s-\eta)\bigr),
\]
and therefore
\[
\int_{s_{1}}^{s_{2}}\!\!\int_{D}\chi^{2}w_{s}\,\delta_{\eta}w_{s}
=\frac{1}{2\eta}\Bigl[\int_{s_{2}-\eta}^{s_{2}}\Theta_{\eta}
-\int_{s_{1}-\eta}^{s_{1}}\Theta_{\eta}\Bigr]
\longrightarrow
\frac{1}{2}\Bigl[\int_{D}\chi^{2}w_{s}^{2}\Bigr]_{s_{1}}^{s_{2}}.
\]
Here we used that $s\mapsto w_{s}(s)$ is uniformly continuous with values in
$L^{2}(D)$, whence
$|\Theta_{\eta}(s)-\int_{D}\chi^{2}w_{s}^{2}(s)|
\leq\|w_{s}(s)\|_{2}\|w_{s}(s+\eta)-w_{s}(s)\|_{2}$ tends to zero uniformly in
$s$, together with the continuity of $s\mapsto\int_{D}\chi^{2}w_{s}^{2}$.  The
same computation with
$\Xi_{\eta}(s)=\int_{D}\chi^{2}\nabla w(s)\cdot\nabla w(s+\eta)\,dy$ and with
$w\in C([0,T];H_{0}^{1}(D))$ gives
\[
\int_{s_{1}}^{s_{2}}\!\!\int_{D}\chi^{2}\nabla w\cdot\nabla\delta_{\eta}w
\longrightarrow
\frac{1}{2}\Bigl[\int_{D}\chi^{2}|\nabla w|^{2}\Bigr]_{s_{1}}^{s_{2}}.
\]

Consider next the remaining terms.  Since $w\in C^{1}([0,T];L^{2}(D))$, we have
$\delta_{\eta}w\to w_{s}$ in $C([s_{1},s_{2}];L^{2}(D))$, and since
$\delta_{\eta}w$ is the average of $w_{s}$ over $[s-\eta,s+\eta]$ and
$w_{s}\in L^{q+1}$, the continuity of translations in
$L^{q+1}(D\times(s_{1},s_{2}))$ gives $\delta_{\eta}w\to w_{s}$ in that space
as well.  Consequently
\[
\Bigl[\int_{D}\chi^{2}w_{s}\,\delta_{\eta}w\Bigr]_{s_{1}}^{s_{2}}
\longrightarrow\Bigl[\int_{D}\chi^{2}w_{s}^{2}\Bigr]_{s_{1}}^{s_{2}},
\qquad
\int_{s_{1}}^{s_{2}}\!\!\int_{D}h\,\chi^{2}\delta_{\eta}w
\longrightarrow\int_{s_{1}}^{s_{2}}\!\!\int_{D}\chi^{2}h\,w_{s},
\]
and, by duality between $L^{(q+1)/q}$ and $L^{q+1}$,
\[
\varepsilon\int_{s_{1}}^{s_{2}}\!\!\int_{D}w_{s}|w_{s}|^{q-1}\chi^{2}\delta_{\eta}w
\longrightarrow
\varepsilon\int_{s_{1}}^{s_{2}}\!\!\int_{D}\chi^{2}|w_{s}|^{q+1}.
\]
Finally, writing
\[
\nabla w\cdot\nabla\bigl(\chi^{2}\delta_{\eta}w\bigr)
=\chi^{2}\nabla w\cdot\nabla\delta_{\eta}w
+2\chi\,\delta_{\eta}w\,\nabla\chi\cdot\nabla w,
\]
the second term converges to
$2\int\!\!\int\chi\,w_{s}\,\nabla\chi\cdot\nabla w$.

Denoting by $\Delta_{1}$ and $\Delta_{2}$ the increments of
$\int_{D}\chi^{2}w_{s}^{2}$ and of $\int_{D}\chi^{2}|\nabla w|^{2}$ between
$s_{1}$ and $s_{2}$, the limit of \eqref{eq:weak-dq} reads
\[
\Delta_{1}-\frac{1}{2}\Delta_{1}+\frac{1}{2}\Delta_{2}
+2\int\!\!\int\chi\,w_{s}\,\nabla\chi\cdot\nabla w
+\varepsilon\int\!\!\int\chi^{2}|w_{s}|^{q+1}
-\int\!\!\int\chi^{2}h\,w_{s}=0,
\]
which is \eqref{eq:loc-energy}.  Both sides of \eqref{eq:loc-energy} are
continuous functions of $(s_{1},s_{2})$, the boundary terms by
\eqref{eq:class} and the time integrals because their integrands belong to
$L^{1}(0,T)$, so the restriction $0<s_{1}<s_{2}<T$ may be removed.
\end{proof}

\begin{lemma}\label{lem:loc-diff}
Let $\varepsilon>0$, let $h^{1},h^{2}\in C([0,T];L^{2}(D))$, and let $w^{1}$ and
$w^{2}$ be weak solutions of \eqref{eq:aux-eq} with right-hand sides $h^{1}$ and
$h^{2}$ respectively, both in the class \eqref{eq:class}.  Then $z=w^{1}-w^{2}$
satisfies, for every Lipschitz function $\chi:\mathbb{R}^{n}\to[0,1]$ with
bounded support and for all $0\leq s_{1}\leq s_{2}\leq T$,
\begin{equation}\label{eq:loc-diff}
\begin{aligned}
&\frac{1}{2}\int_{D}\chi^{2}\bigl(z_{s}^{2}+|\nabla z|^{2}\bigr)(s_{2})\,dy
\leq\frac{1}{2}\int_{D}\chi^{2}\bigl(z_{s}^{2}+|\nabla z|^{2}\bigr)(s_{1})\,dy\\
&\qquad\qquad
+\int_{s_{1}}^{s_{2}}\!\!\int_{D}\chi^{2}\bigl(h^{1}-h^{2}\bigr)z_{s}\,dy\,ds
-2\int_{s_{1}}^{s_{2}}\!\!\int_{D}\chi\,z_{s}\,\nabla\chi\cdot\nabla z\,dy\,ds.
\end{aligned}
\end{equation}
\end{lemma}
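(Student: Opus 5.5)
We follow the proof of Lemma \ref{lem:loc-energy} almost verbatim, applied to the difference $z=w^{1}-w^{2}$. Subtracting the weak formulations \eqref{eq:weak} for $w^{1}$ and $w^{2}$ (with $u|u|^{p-1}$ replaced by $h^{1}$, $h^{2}$) shows that $z$ belongs to the class \eqref{eq:class} and satisfies, for every admissible $\varphi$,
\[
\Bigl[\int_{D}z_{s}\varphi\Bigr]_{s_{1}}^{s_{2}}
+\int_{s_{1}}^{s_{2}}\!\!\int_{D}\Bigl[-z_{s}\varphi_{s}+\nabla z\cdot\nabla\varphi
+\varepsilon\bigl(w^{1}_{s}|w^{1}_{s}|^{q-1}-w^{2}_{s}|w^{2}_{s}|^{q-1}\bigr)\varphi
-(h^{1}-h^{2})\varphi\Bigr]=0.
\]
Note that $z_{s}\in L^{q+1}(D\times(0,T))$, and that the damping difference lies in $L^{(q+1)/q}$.

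First take $0<s_{1}<s_{2}<T$ and test with $\varphi=\chi^{2}\delta_{\eta}z$, the symmetric difference quotient. Then pass to the limit $\eta\to0$ exactly as before. The two quadratic terms telescope, through $\Theta_{\eta}$ and $\Xi_{\eta}$ built from $z$, and produce $\tfrac12[\int\chi^{2}(z_{s}^{2}+|\nabla z|^{2})]_{s_{1}}^{s_{2}}$. The boundary term, the source term and the commutator term $2\int\!\!\int\chi\,\delta_{\eta}z\,\nabla\chi\cdot\nabla z$ converge as in Lemma \ref{lem:loc-energy}. This uses $\delta_{\eta}z\to z_{s}$ in $C([s_{1},s_{2}];L^{2})$ and in $L^{q+1}(D\times(s_{1},s_{2}))$. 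The damping term converges by $L^{(q+1)/q}$--$L^{q+1}$ duality to
\[
\varepsilon\int_{s_{1}}^{s_{2}}\!\!\int_{D}\chi^{2}\bigl(w^{1}_{s}|w^{1}_{s}|^{q-1}-w^{2}_{s}|w^{2}_{s}|^{q-1}\bigr)\bigl(w^{1}_{s}-w^{2}_{s}\bigr)\,dy\,ds .
\]
Together these yield an identity of the form \eqref{eq:loc-energy} for $z$, with the dissipation replaced by this term and $h$ replaced by $h^{1}-h^{2}$.

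The key step, and the only one not already contained in Lemma \ref{lem:loc-energy}, is to discard the damping term by a sign argument. The map $\zeta\mapsto\zeta|\zeta|^{q-1}$ is nondecreasing on $\mathbb{R}$, so $(a|a|^{q-1}-b|b|^{q-1})(a-b)\geq0$ pointwise. Since $\chi^{2}\geq0$ and $\varepsilon>0$, the integral above is nonnegative. Moving it to the left-hand side and dropping it turns the identity into the inequality \eqref{eq:loc-diff}.

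The mildly delicate point is the limit in the damping term: unlike the single-solution case, the limit is not a power of $z_{s}$. But $\delta_{\eta}z\to z_{s}$ in $L^{q+1}$ and the damping difference is a fixed element of $L^{(q+1)/q}$, so duality suffices. Finally, as in Lemma \ref{lem:loc-energy}, both sides are continuous in $(s_{1},s_{2})$: the boundary terms by \eqref{eq:class}, the time integrals because their integrands are in $L^{1}(0,T)$. This removes the restriction $0<s_{1}<s_{2}<T$. One could instead apply Lemma \ref{lem:loc-energy} to $z$ directly, but $z$ does not solve \eqref{eq:aux-eq} with a monotone damping of its own, so repeating the argument is the cleaner route.
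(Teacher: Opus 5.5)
Your proposal is correct and follows the paper's own proof. Like the paper, you subtract the two weak formulations, test with $\chi^{2}\delta_{\eta}z$, repeat the limiting argument of Lemma \ref{lem:loc-energy} (with the damping difference handled by $L^{(q+1)/q}$--$L^{q+1}$ duality), and then drop the nonnegative monotone term $\varepsilon\int\!\!\int\chi^{2}\bigl(w^{1}_{s}|w^{1}_{s}|^{q-1}-w^{2}_{s}|w^{2}_{s}|^{q-1}\bigr)(w^{1}_{s}-w^{2}_{s})$.
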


\begin{proof}
Subtract the two identities \eqref{eq:weak} satisfied by $w^{1}$ and $w^{2}$ and
test the result with $\chi^{2}\delta_{\eta}z$.  Every step of the proof of
Lemma \ref{lem:loc-energy} applies without change, with $h^{1}-h^{2}$ in place
of $h$, and yields in the limit the identity \eqref{eq:loc-diff} with the
additional term
\[
\varepsilon\int_{s_{1}}^{s_{2}}\!\!\int_{D}\chi^{2}
\bigl(w_{s}^{1}|w_{s}^{1}|^{q-1}-w_{s}^{2}|w_{s}^{2}|^{q-1}\bigr)
\bigl(w_{s}^{1}-w_{s}^{2}\bigr)dy\,ds
\]
on the left-hand side, the convergence of the damping term following as before
from $w^{i}_{s}|w^{i}_{s}|^{q-1}\in L^{(q+1)/q}$ and from
$\delta_{\eta}z\to z_{s}$ in $L^{q+1}$.  The map
$\zeta\mapsto\zeta|\zeta|^{q-1}$ being nondecreasing, that term is nonnegative,
and discarding it gives \eqref{eq:loc-diff}.
\end{proof}

The inequalities \eqref{eq:loc-energy} and \eqref{eq:loc-diff} will always be
used in combination with the following elementary identity: for every bounded
measurable $\chi:\mathbb{R}^{n}\to\mathbb{R}$ and every
$z\in C^{1}([0,T];L^{2}(D))$, the function
$s\mapsto\int_{D}\chi^{2}z^{2}(s)\,dy$ is of class $C^{1}$ with derivative
$2\int_{D}\chi^{2}z\,z_{s}\,dy$, so that
\begin{equation}\label{eq:L2loc}
\frac{1}{2}\int_{D}\chi^{2}z^{2}(s_{2})\,dy
=\frac{1}{2}\int_{D}\chi^{2}z^{2}(s_{1})\,dy
+\int_{s_{1}}^{s_{2}}\!\!\int_{D}\chi^{2}z\,z_{s}\,dy\,ds,
\qquad 0\leq s_{1}\leq s_{2}\leq T.
\end{equation}
The first consequence of Lemma \ref{lem:loc-diff} is uniqueness, which makes
the maximal existence time well defined.

\begin{lemma}\label{lem:unique}
Under \eqref{eq:exponents}, two weak solutions of \eqref{eq:eqD} in the class
\eqref{eq:class} with the same initial data coincide.  The same holds for
\eqref{eq:aux-eq} with a given right-hand side $h$.
\end{lemma}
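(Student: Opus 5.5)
The plan is to apply Lemma \ref{lem:loc-diff} with $\chi\equiv1$ on the bounded domain $D$, which is admissible since $D$ is bounded; we may take $\chi$ Lipschitz, equal to $1$ on $D$ and compactly supported. Let $w^{1},w^{2}$ be two weak solutions on $[0,T]$ with the same data, and set $z=w^{1}-w^{2}$, $h^{i}=w^{i}|w^{i}|^{p-1}$. These lie in $C([0,T];L^{2}(D))$, as noted after \eqref{eq:weak}. Since $\nabla\chi=0$ on $D$, the commutator term in \eqref{eq:loc-diff} vanishes and we get
\[
\frac12\|z_{s}(s)\|_{2}^{2}+\frac12\|\nabla z(s)\|_{2}^{2}
\leq\int_{0}^{s}\!\!\int_{D}\bigl(h^{1}-h^{2}\bigr)z_{s}\,dy\,d\tau,
\]
because $z(0)=z_{s}(0)=0$. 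For the auxiliary problem \eqref{eq:aux-eq} with a common $h$, the right-hand side is zero and the conclusion is immediate.

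The main step is to estimate the source difference. We use the pointwise bound $|h^{1}-h^{2}|\leq p\bigl(|w^{1}|^{p-1}+|w^{2}|^{p-1}\bigr)|z|$ and Hölder's inequality with exponents chosen so that every factor is controlled in $H_{0}^{1}$. In the case $n\geq3$, we put $|w^{i}|^{p-1}$ in $L^{n}$, which requires $w^{i}\in L^{n(p-1)}$, and $z$ in $L^{2n/(n-2)}$, and $z_{s}$ in $L^{2}$. The condition $n(p-1)\leq 2n/(n-2)$ holds since $p<n/(n-2)$ gives $p-1<2/(n-2)$. Because $D$ is bounded, the embedding $H_{0}^{1}\hookrightarrow L^{n(p-1)}$ holds. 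For $n\leq2$, every finite Lebesgue exponent is available; we take, for example, $|w^{i}|^{p-1}\in L^{4}$ and $z\in L^{4}$. In both cases we obtain
\[
\Bigl|\int_{D}(h^{1}-h^{2})z_{s}\Bigr|\leq C\bigl(\|\nabla w^{1}\|_{2}^{p-1}+\|\nabla w^{2}\|_{2}^{p-1}\bigr)\|\nabla z\|_{2}\|z_{s}\|_{2}.
\]
Here the Sobolev constants are those of the extensions by zero to $\mathbb{R}^{n}$, possibly combined with $|D|$. Since $w^{i}\in C([0,T];H_{0}^{1})$, the prefactor is bounded by some $M$ on $[0,T]$.

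Setting $Y(s)=\|z_{s}(s)\|_{2}^{2}+\|\nabla z(s)\|_{2}^{2}$, a continuous function, we obtain $Y(s)\leq CM\int_{0}^{s}Y$ with $Y(0)=0$. Gronwall's lemma then gives $Y\equiv0$. Hence $\nabla z\equiv0$, and so $z\equiv0$ because $z\in H_{0}^{1}$; we can equally use \eqref{eq:L2loc} with $z(0)=0$. The only delicate point is the exponent bookkeeping in Hölder's inequality for $n\geq3$, which is exactly where the restriction \eqref{eq:exponents} is used. Monotonicity of the damping has already been absorbed in Lemma \ref{lem:loc-diff}.
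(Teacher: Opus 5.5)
Your proof is correct and is essentially the paper's argument: Lemma \ref{lem:loc-diff} with $\chi\equiv1$ near $\overline{D}$, then a mean-value/H\"older/Sobolev bound for $h^{1}-h^{2}$, then Gronwall. The differences are cosmetic. The paper places $w^{i}$ and $z$ in $L^{2p}$ via $\frac{1}{2}=\frac{p-1}{2p}+\frac{1}{2p}$ and adds \eqref{eq:L2loc} to control $\|z\|_{2}$ instead of invoking Poincar\'e, whereas your split $L^{n}\cdot L^{2n/(n-2)}\cdot L^{2}$ relies on the same condition $p\leq n/(n-2)$.
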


\begin{proof}
Let $u^{1},u^{2}$ be two such solutions of \eqref{eq:eqD} and $z=u^{1}-u^{2}$.
Since $u^{i}\in C([0,T];H_{0}^{1}(D))$ and
$H_{0}^{1}(D)\hookrightarrow L^{2p}(D)$ under \eqref{eq:exponents}, the functions
$h^{i}=u^{i}|u^{i}|^{p-1}$ belong to $C([0,T];L^{2}(D))$, so that Lemma
\ref{lem:loc-diff} applies.  Choose $\chi$ equal to one on a ball containing
$\overline{D}$, so that $\nabla\chi=0$ on $D$, and add to \eqref{eq:loc-diff}
the identity \eqref{eq:L2loc} with this $\chi$.  Setting
\[
I(s)=\frac{1}{2}\int_{D}\bigl(z_{s}^{2}+|\nabla z|^{2}+z^{2}\bigr)(s)\,dy,
\]
and using the mean value theorem in the form
$|h^{1}-h^{2}|\leq p(|u^{1}|+|u^{2}|)^{p-1}|z|$ together with H\"older's
inequality with the exponents given by
$\frac{1}{2}=\frac{p-1}{2p}+\frac{1}{2p}$ and the embedding
$H_{0}^{1}(D)\hookrightarrow L^{2p}(D)$, we obtain
\[
I(s_{2})\leq I(s_{1})+C\int_{s_{1}}^{s_{2}}I(\tau)\,d\tau,
\qquad 0\leq s_{1}\leq s_{2}\leq T,
\]
with a constant depending on $\sup_{s}\|u^{i}(s)\|_{2p}$.  Since $I(0)=0$, the
integral form of Gronwall's lemma gives $I\equiv0$.  For \eqref{eq:aux-eq} the
same argument applies with $h^{1}-h^{2}=0$.
\end{proof}

The existence part of the local theory will be obtained by a fixed point
argument, in which the source term is frozen.  The corresponding auxiliary
problem is the following.

\begin{lemma}\label{lem:aux}
Let $D$ be a bounded Lipschitz domain, let $\varepsilon>0$, let
$(w_{0},w_{1})\in H_{0}^{1}(D)\times L^{2}(D)$ and let
$h\in C([0,T];L^{2}(D))$.  Then the problem
\begin{equation}\label{eq:aux-pb}
w_{ss}-\Delta w+\varepsilon\,w_{s}|w_{s}|^{q-1}=h,
\qquad w|_{\partial D}=0,
\qquad (w,w_{s})|_{s=0}=(w_{0},w_{1}),
\end{equation}
has a unique weak solution in the class \eqref{eq:class}, and this solution
satisfies the energy identity
\begin{equation}\label{eq:energy-id}
\frac{1}{2}\|w_{s}(t)\|_{2}^{2}+\frac{1}{2}\|\nabla w(t)\|_{2}^{2}
+\varepsilon\int_{0}^{t}\!\!\int_{D}|w_{s}|^{q+1}
=\frac{1}{2}\|w_{1}\|_{2}^{2}+\frac{1}{2}\|\nabla w_{0}\|_{2}^{2}
+\int_{0}^{t}\!\!\int_{D}h\,w_{s}.
\end{equation}
\end{lemma}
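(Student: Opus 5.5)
Uniqueness is already contained in Lemma \ref{lem:unique}, whose last sentence covers \eqref{eq:aux-eq} with a given right-hand side $h$. The energy identity \eqref{eq:energy-id} is Lemma \ref{lem:loc-energy} with $s_{1}=0$, $s_{2}=t$ and $\chi\equiv1$ on a ball containing $\overline{D}$, so that the commutator term vanishes. It therefore remains to prove existence. I would follow the monotone-operator / Faedo--Galerkin route of Lions and of Georgiev--Todorova \cite{Georgiev1994}.

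\textbf{Galerkin approximation.} Let $(e_{k})$ be the Dirichlet eigenfunctions of $-\Delta$ on $D$; they form a basis of $H_{0}^{1}(D)$ orthogonal in $L^{2}(D)$. I would also need them to lie in $L^{q+1}(D)$, which holds since $H_{0}^{1}\hookrightarrow L^{q+1}$ by \eqref{eq:sobolev-chain}. On the span of $e_{1},\dots,e_{m}$, solve the ODE system
\[
(w^{m}_{ss},e_{k})+(\nabla w^{m},\nabla e_{k})+\varepsilon(w^{m}_{s}|w^{m}_{s}|^{q-1},e_{k})=(h,e_{k}),
\]
with projected data converging in $H_{0}^{1}\times L^{2}$. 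The nonlinearity is continuous, so Peano's theorem gives local solutions. Testing with $w^{m}_{s}$ yields the finite-dimensional analogue of \eqref{eq:energy-id}, and Gronwall's lemma with $\int h\,w^{m}_{s}\le\frac12\|h\|_{2}^{2}+\frac12\|w^{m}_{s}\|_{2}^{2}$ gives bounds, uniform in $m$, for:
\begin{itemize}
\item $w^{m}$ in $L^{\infty}(0,T;H_{0}^{1})$;
\item $w^{m}_{s}$ in $L^{\infty}(0,T;L^{2})\cap L^{q+1}(D\times(0,T))$;
\item $w^{m}_{s}|w^{m}_{s}|^{q-1}$ in $L^{(q+1)/q}$.
\end{itemize}
These bounds show the approximate solutions exist on all of $[0,T]$.

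\textbf{Passage to the limit.} Extract weak-$*$ and weak limits: $w^{m}\to w$ and $w^{m}_{s}\to w_{s}$, and $w^{m}_{s}|w^{m}_{s}|^{q-1}\rightharpoonup\chi_{0}$ in $L^{(q+1)/q}$. The linear terms pass to the limit directly. I expect the main obstacle to be identifying $\chi_{0}=w_{s}|w_{s}|^{q-1}$, since no compactness for $w^{m}_{s}$ is available. I would use the Minty monotonicity trick:
\begin{enumerate}
\item Compare the energy identity of $w^{m}$ with the identity satisfied by the limit. The latter is obtained through the difference-quotient argument of Lemma \ref{lem:loc-energy}, applied with $\varepsilon\,w_{s}|w_{s}|^{q-1}$ replaced by $\chi_{0}$; that proof only uses $\chi_{0}\in L^{(q+1)/q}$.
\item Weak lower semicontinuity of the energy at time $t$ then gives $\limsup\int\!\!\int w^{m}_{s}|w^{m}_{s}|^{q-1}w^{m}_{s}\le\int\!\!\int\chi_{0}w_{s}$.
\item Monotonicity of $\zeta\mapsto\zeta|\zeta|^{q-1}$ and maximal monotonicity then give $\chi_{0}=w_{s}|w_{s}|^{q-1}$.
\end{enumerate}
Step 2 requires strong convergence of the initial data, which holds by construction.

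\textbf{Continuity in time.} The limit satisfies \eqref{eq:weak}, but at first only $w\in L^{\infty}(H_{0}^{1})$ and $w_{s}\in L^{\infty}(L^{2})$. To reach the class \eqref{eq:class}, I would first obtain weak continuity in time in $H_{0}^{1}\times L^{2}$. The energy identity shows that $t\mapsto\|w_{s}\|_{2}^{2}+\|\nabla w\|_{2}^{2}$ is continuous, and weak continuity combined with continuity of the norms upgrades this to strong continuity. Alternatively, I could quote the corresponding results of \cite{Georgiev1994,TodorovaVitillaro2005,BociuLasiecka2010}, which treat exactly this problem on bounded Lipschitz domains.
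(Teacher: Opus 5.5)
Your outline is the one the paper sketches: Galerkin approximation, identification of the damping term by monotonicity, upgrade from weak continuity plus continuity of the energy to strong continuity, uniqueness from Lemma~\ref{lem:unique}, and \eqref{eq:energy-id} from Lemma~\ref{lem:loc-energy}. The gap is in Step~1 of your Minty argument, where you obtain the energy identity for the limit $w$ (with $\chi_{0}$ in place of $\varepsilon w_{s}|w_{s}|^{q-1}$) from the difference-quotient proof of Lemma~\ref{lem:loc-energy}, claiming that proof only uses $\chi_{0}\in L^{(q+1)/q}$. It also uses $w\in C([0,T];H_{0}^{1}(D))\cap C^{1}([0,T];L^{2}(D))$ throughout:
\begin{itemize}
\item to make $\chi^{2}\delta_{\eta}w$ an admissible test function in \eqref{eq:weak};
\item to have $\Theta_{\eta}(s)\to\int_{D}\chi^{2}w_{s}^{2}(s)\,dy$ and $\Xi_{\eta}(s)\to\int_{D}\chi^{2}|\nabla w(s)|^{2}\,dy$ uniformly in $s$, which is what identifies the telescoped averages $\frac{1}{2\eta}\int_{s_{2}-\eta}^{s_{2}}\Theta_{\eta}$ with the values at $s_{2}$;
\item to have $\delta_{\eta}w\to w_{s}$ in $C([s_{1},s_{2}];L^{2}(D))$.
\end{itemize}
For your limit you only know $w\in L^{\infty}(0,T;H_{0}^{1})$, $w_{s}\in L^{\infty}(0,T;L^{2})\cap L^{q+1}$, and at best weak continuity. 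Then nothing identifies the limit of those averages of $\int_{D}w_{s}(\tau)w_{s}(\tau+\eta)\,dy$ with $\|w_{s}(s_{2})\|_{2}^{2}$; this is exactly how energy can be lost in a weak limit. Since your last step derives strong continuity from this very identity, the argument is circular. Moreover, with $e(t)=\frac12\|w_{s}(t)\|_{2}^{2}+\frac12\|\nabla w(t)\|_{2}^{2}$, Step~2 needs precisely $e(t)+\varepsilon\int_{0}^{t}\!\!\int_{D}\chi_{0}w_{s}\geq e(0)+\int_{0}^{t}\!\!\int_{D}hw_{s}$. That is the direction weak lower semicontinuity cannot supply.

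The missing ingredient is a Lions--Magenes/Strauss regularization that does not presuppose the class, applied to the \emph{linear} problem $w_{ss}-\Delta w=F:=h-\varepsilon\chi_{0}$.
\begin{enumerate}
\item Mollify $w$ in time. The mollified $w^{\delta}$ are smooth in time with values in $H_{0}^{1}(D)$, so they satisfy the energy identity with forcing $F^{\delta}$ classically.
\item Apply that identity to $w^{\delta}-w^{\delta'}$ between a suitable $t_{0}$ and $t$. This shows $(w^{\delta})$ is Cauchy in $C(I;H_{0}^{1})\cap C^{1}(I;L^{2})$ on compact subintervals $I\subset(0,T)$. The forcing term is small because $h^{\delta}\to h$ in $L^{1}(I;L^{2})$ and $\chi_{0}^{\delta}\to\chi_{0}$ in $L^{(q+1)/q}(D\times I)$, while $w^{\delta}_{s}$ stays bounded in $L^{\infty}(I;L^{2})\cap L^{q+1}(D\times I)$. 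The energy of $w^{\delta}-w^{\delta'}$ at some $t_{0}$ is small because $w^{\delta}\to w$ in $L^{2}(I;H_{0}^{1})$ and $w^{\delta}_{s}\to w_{s}$ in $L^{2}(I;L^{2})$.
\item This gives at once interior strong continuity and the energy identity with $\chi_{0}$.
\item At $s=0$, continuity follows from weak continuity together with $e(t)\leq e(0)+\int_{0}^{t}\!\!\int_{D}hw_{s}$. That inequality passes to the limit from the Galerkin identities, using the strong convergence of the projected data. Solving on a slightly longer interval, after extending $h$ continuously, makes $s=T$ an interior point.
\end{enumerate}
Only after this do Minty's trick and Lemma~\ref{lem:loc-energy} become available. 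If you instead quote \cite{Georgiev1994,BociuLasiecka2010} for this step, as the paper does, your proposal reduces to the paper's proof.
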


\begin{proof}
Existence is the Faedo--Galerkin construction for a wave equation with a
monotone damping term, for which we refer to \cite[Chapter 1]{Lions} and to
\cite[Section 2]{Georgiev1994}; the damping term is identified in the limit by
monotonicity, as in \cite[Section 2]{BociuLasiecka2010}.  We recall that the strong
continuity required in \eqref{eq:class} is not a consequence of linear theory
here, since $w_{s}|w_{s}|^{q-1}$ belongs to $L^{(q+1)/q}$ only and
$(q+1)/q<2$; it is obtained in the quoted references in the standard way, by
combining the weak continuity of $s\mapsto(w(s),w_{s}(s))$ in
$H_{0}^{1}(D)\times L^{2}(D)$ with the continuity in $s$ of
$\|w_{s}(s)\|_{2}^{2}+\|\nabla w(s)\|_{2}^{2}$, which the energy identity
provides.  Uniqueness is contained in Lemma \ref{lem:unique}.  Once the
solution is known to belong to \eqref{eq:class}, the identity
\eqref{eq:energy-id} is \eqref{eq:loc-energy} with $s_{1}=0$, $s_{2}=t$ and
$\chi$ equal to one on a ball containing $\overline{D}$, so that $\nabla\chi=0$
on $D$.
\end{proof}

The two inequalities \eqref{eq:loc-energy} and \eqref{eq:loc-diff} are the only
properties of weak solutions that will be used in the sequel.  The damping
coefficient $\varepsilon$ appears in \eqref{eq:loc-energy} on the left-hand
side, hence with a favourable sign, and does not appear in \eqref{eq:loc-diff}
at all, having been discarded together with the monotone term.  This is the
reason why all the constants obtained below are independent of $\varepsilon$.
We now prove that solutions propagate at a finite speed.  Instead of using a
cutoff function adapted to a light cone, whose gradient becomes unbounded as
the cone is approximated, we work with a family of nested cutoff functions of
fixed width and let the number of steps tend to infinity.  In this way no
derivative of a cutoff function ever has to be controlled by the localized
energy itself.  The speed obtained is finite but is not equal to one, which is
sufficient for the applications made below.

\begin{lemma}\label{lem:finiteprop}
Let $D$ be a bounded Lipschitz domain, let $\varepsilon>0$, and let $u^{1}$ and
$u^{2}$ be weak solutions of \eqref{eq:eqD} on $D\times[0,T]$ in the class
\eqref{eq:class} whose initial data coincide almost everywhere on
$B_{R}(y_{0})\cap D$, where $y_{0}\in\mathbb{R}^{n}$ and $0<r_{0}<R\leq10$; only
the case $R=1$ and $r=r_{0}=\frac{1}{2}$ is used below, in the proof of Lemma
\ref{lem:glue}, and the restriction $R\leq10$ is a normalization which plays no
further role.  Put
\[
M=\max_{i=1,2}\ \sup_{0\leq s\leq T}\|u^{i}(s)\|_{L^{2p}(D)}<\infty.
\]
Then there exists a constant $\sigma=\sigma(M,n,p,r_{0})<\infty$, nondecreasing
in $M$, such that for every $r\in[r_{0},R)$,
\begin{equation}\label{eq:finiteprop}
u^{1}=u^{2}
\quad\text{almost everywhere in }
\Bigl\{(y,s):\ 0\leq s\leq\min\Bigl(T,\frac{R-r}{\sigma}\Bigr),\
y\in B_{r}(y_{0})\cap D\Bigr\}.
\end{equation}
\end{lemma}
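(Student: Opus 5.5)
The proof will be a finite-speed argument through nested cutoffs, built on the difference inequality of Lemma \ref{lem:loc-diff} together with the identity \eqref{eq:L2loc}.

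\textbf{Setup.} Put $z=u^{1}-u^{2}$ and $h^{i}=u^{i}|u^{i}|^{p-1}$. These lie in $C([0,T];L^{2}(D))$, exactly as in Lemma \ref{lem:unique}. For a Lipschitz cutoff $\chi$ with values in $[0,1]$, supported in $B_{\rho'}(y_{0})$, equal to one on $B_{\rho}(y_{0})$, and with $|\nabla\chi|\leq1/(\rho'-\rho)$, define
\[
I_{\chi}(s)=\frac{1}{2}\int_{D}\chi^{2}\bigl(z_{s}^{2}+|\nabla z|^{2}+z^{2}\bigr)(s)\,dy .
\]
Adding \eqref{eq:L2loc} to \eqref{eq:loc-diff} gives
\[
I_{\chi}(s)\leq I_{\chi}(0)+\int_{0}^{s}\!\!\int_{D}\chi^{2}\bigl(|h^{1}-h^{2}|+|z|\bigr)|z_{s}|+2\int_{0}^{s}\!\!\int_{D}\chi|z_{s}||\nabla\chi||\nabla z| .
\]

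\textbf{Bounding the source term.} The pointwise bound $|h^{1}-h^{2}|\leq p(|u^{1}|+|u^{2}|)^{p-1}|z|$ and H\"older with $\frac12=\frac{p-1}{2p}+\frac1{2p}$ give
\[
\int\chi^{2}|h^{1}-h^{2}||z_{s}|\leq CM^{p-1}\|\chi z\|_{2p}\|\chi z_{s}\|_{2} .
\]
Now $\chi z$ extended by zero lies in $H^{1}(\mathbb{R}^{n})$, so $\|\chi z\|_{2p}\leq C\|\nabla(\chi z)\|_{2}+C\|\chi z\|_{2}$. The gradient term $z\nabla\chi$ is supported on the annulus $B_{\rho'}\setminus B_{\rho}$ and is not controlled by $\chi$-weighted quantities. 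The commutator term $\chi|z_{s}||\nabla\chi||\nabla z|$ likewise involves $\nabla z$ on that annulus. Both are bounded by $C(\rho'-\rho)^{-1}$ times energy quantities localized with a larger cutoff $\tilde\chi$, equal to one on $B_{\rho'}$.

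\textbf{Iteration.} Fix $N$ and radii $\rho_{k}=r+k(R-r)/N$ for $k=0,\dots,N$, with cutoffs $\chi_{k}$ equal to one on $B_{\rho_{k}}$ and supported in $B_{\rho_{k+1}}$. Then $I_{k}(s):=I_{\chi_{k}}(s)$ satisfies
\[
I_{k}(s)\leq C_{0}\,\frac{N}{R-r}\int_{0}^{s}I_{k+1},
\]
with $C_{0}=C(1+M^{p-1})$. This uses $I_{k}(0)=0$, because the data coincide on $B_{R}$, and the fact that $\chi_{k}\leq\mathbf 1_{B_{\rho_{k+1}}}\leq\chi_{k+1}$. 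The factor $N/(R-r)\geq1$ absorbs the zeroth-order constants, since $R\leq10$ and $\rho'-\rho\leq R$. Iterating $N$ times, and bounding the top level by the global energy $\sup I\leq K$ (finite by the class \eqref{eq:class}), gives
\[
I_{0}(s)\leq K\Bigl(\frac{C_{0}Ns}{R-r}\Bigr)^{N}\frac{1}{N!}\leq K\Bigl(\frac{eC_{0}s}{R-r}\Bigr)^{N}
\]
by Stirling. For $s<(R-r)/(eC_{0})$ this tends to zero as $N\to\infty$, so $z=0$ on $B_{r}$ up to that time. One takes $\sigma=eC_{0}$, or slightly larger, to cover the endpoint by continuity. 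This $\sigma$ is nondecreasing in $M$ and, through the lower bound $R-r$, depends only on $n$, $p$ and $r_{0}$ in the way required.

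\textbf{Main obstacle.} The delicate step is the reduction of $\|\chi z\|_{2p}$ to $H^{1}$ quantities with a cutoff one step larger, while keeping constants independent of $D$ and $\varepsilon$. For this I would rely on the zero extension of $H_{0}^{1}$ functions, as in Remarks \ref{rem:main}$(v)$, and on the whole-space Sobolev inequality. The lower-order $z^{2}$ term in $I$ is what makes the inhomogeneous Sobolev bound usable.
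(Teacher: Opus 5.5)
Your proof is correct and follows the paper's route: the difference inequality of Lemma \ref{lem:loc-diff} combined with \eqref{eq:L2loc} between nested regions of width $(R-r)/N$, an $N$-fold iteration closed by the global energy, the bound $N!\geq(N/e)^{N}$, and $N\to\infty$. The one difference is in the Sobolev step. The paper works with sharp-ball energies and the embedding $H^{1}(B_{r'})\hookrightarrow L^{2p}(B_{r'})$, with a constant uniform in $r'\in[r_{0},10]$, and this is the only place $r_{0}$ enters; you instead apply the whole-space inequality to $\chi z$ and absorb the extra factor $(\rho'-\rho)^{-1}$ from $z\nabla\chi$ into the one the commutator already produces, so your $\sigma$ is actually independent of $r_{0}$ (it does not depend on it ``through $R-r$'', as you wrote).
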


\begin{proof}
Let $z=u^{1}-u^{2}$ and, for $0<\rho\leq R$, set
\[
I_{\rho}(s)=\frac{1}{2}\int_{B_{\rho}(y_{0})\cap D}
\bigl(z_{s}^{2}+|\nabla z|^{2}+z^{2}\bigr)(y,s)\,dy,
\]
which is a continuous function of $s$ satisfying
$I^{*}:=\sup_{[0,T]}I_{R}<\infty$ and $I_{\rho}(0)=0$ for $\rho\leq R$.  All the
radii occurring below belong to $[r_{0},R]$.

\emph{Step 1: an inequality between nested radii.}  Let
$r_{0}\leq\rho<\rho+\delta\leq R$ and let $\chi$ be a smooth function equal to
one on $B_{\rho}(y_{0})$, supported in $B_{\rho+\delta}(y_{0})$, with values in
$[0,1]$ and with $|\nabla\chi|\leq2/\delta$.  Adding \eqref{eq:L2loc} to
\eqref{eq:loc-diff}, using $\chi^{2}\leq\mathbf{1}_{B_{\rho+\delta}}$,
$I_{\rho}(s)\leq\frac{1}{2}\int\chi^{2}(z_{s}^{2}+|\nabla z|^{2}+z^{2})(s)$ and
$I_{\rho+\delta}(0)=0$, we obtain
\begin{equation}\label{eq:Irho}
I_{\rho}(s)\leq\int_{0}^{s}\bigl[J_{1}(\tau)+J_{2}(\tau)+J_{3}(\tau)\bigr]d\tau,
\end{equation}
where, all integrals being taken over $B_{\rho+\delta}(y_{0})\cap D$,
\[
J_{1}=\int\bigl|u^{1}|u^{1}|^{p-1}-u^{2}|u^{2}|^{p-1}\bigr||z_{s}|,
\qquad
J_{2}=\int|z||z_{s}|,
\qquad
J_{3}=\frac{4}{\delta}\int|z_{s}||\nabla z|.
\]
By the mean value theorem,
$|u^{1}|u^{1}|^{p-1}-u^{2}|u^{2}|^{p-1}|\leq p(|u^{1}|+|u^{2}|)^{p-1}|z|$, so
that H\"older's inequality with the exponents given by
$\frac{1}{2}=\frac{p-1}{2p}+\frac{1}{2p}$, together with
$\||u^{1}|+|u^{2}|\|_{2p}^{p-1}\leq(2M)^{p-1}$, yields
\[
J_{1}\leq p\,2^{p-1}M^{p-1}
\|z\|_{L^{2p}(B_{\rho+\delta}\cap D)}
\|z_{s}\|_{L^{2}(B_{\rho+\delta}\cap D)}.
\]
Since $z(\tau)\in H_{0}^{1}(D)$, its extension by zero, which we still denote
by $z$, belongs to $H^{1}(\mathbb{R}^{n})$ with the same norm, and vanishes on
$B_{\rho+\delta}\setminus D$; applying the Sobolev embedding on the ball
$B_{\rho+\delta}$ to this extension therefore gives
\begin{equation}\label{eq:sob-ball}
\|z\|_{L^{2p}(B_{\rho+\delta}\cap D)}
\leq C_{1}\|z\|_{H^{1}(B_{\rho+\delta}\cap D)}
\leq C_{1}\bigl(2I_{\rho+\delta}\bigr)^{1/2},
\end{equation}
where $C_{1}=C_{1}(n,p,r_{0})$ is the supremum of the embedding constants of
$H^{1}(B_{r'})\hookrightarrow L^{2p}(B_{r'})$ for $r'\in[r_{0},10]$.  This
supremum is finite: the scaling $\tilde{z}(x)=z(r'x)$ gives
\[
\|z\|_{L^{2p}(B_{r'})}
\leq C(n,p)\,(r')^{-\frac{n}{2}\left(1-\frac{1}{p}\right)}
\Bigl(r'\|\nabla z\|_{L^{2}(B_{r'})}
+\|z\|_{L^{2}(B_{r'})}\Bigr),
\]
which is bounded uniformly for $r'$ in a compact subset of $(0,\infty)$,
but not as $r'$ tends to zero.  This is the only point of the proof at
which the lower bound $r_{0}$ is needed.  Hence
$J_{1}\leq C_{2}(M)I_{\rho+\delta}$ with $C_{2}(M)=2^{p}pC_{1}M^{p-1}$.  Moreover
$J_{2}\leq I_{\rho+\delta}$, and, since
$|z_{s}||\nabla z|\leq\frac{1}{2}(z_{s}^{2}+|\nabla z|^{2})$, also
$J_{3}\leq\frac{4}{\delta}I_{\rho+\delta}$.  Setting
$C_{3}=10\,(C_{2}(M)+1)+4$, which depends only on $M$, $n$, $p$ and $r_{0}$,
and using $\delta\leq R\leq10$, we conclude from \eqref{eq:Irho} that
\begin{equation}\label{eq:nested}
I_{\rho}(s)\leq\frac{C_{3}}{\delta}\int_{0}^{s}I_{\rho+\delta}(\tau)\,d\tau
\qquad\text{whenever }r_{0}\leq\rho<\rho+\delta\leq R.
\end{equation}

\emph{Step 2: iteration.}  Let $r\in[r_{0},R)$ and $k\in\mathbb{N}$, and apply
\eqref{eq:nested} $k$ times with the constant step $\delta=(R-r)/k$, at the
radii $r,r+\delta,\dots,r+(k-1)\delta$, all of which belong to $[r_{0},R]$.
This gives
\[
I_{r}(s)\leq\Bigl(\frac{C_{3}k}{R-r}\Bigr)^{k}
\int_{0}^{s}\!\!\int_{0}^{\tau_{1}}\!\!\cdots\int_{0}^{\tau_{k-1}}
I_{R}(\tau_{k})\,d\tau_{k}\cdots d\tau_{1}
\leq\Bigl(\frac{C_{3}k}{R-r}\Bigr)^{k}\frac{s^{k}}{k!}\,I^{*},
\]
and, since $k!\geq(k/e)^{k}$,
\[
I_{r}(s)\leq\Bigl(\frac{eC_{3}\,s}{R-r}\Bigr)^{k}I^{*}
\qquad\text{for every }k\in\mathbb{N}.
\]
If $s<(R-r)/(eC_{3})$, the quantity in brackets is smaller than one, and
letting $k$ tend to infinity gives $I_{r}(s)=0$; by continuity in $s$ the same
holds at $s=(R-r)/(eC_{3})$.  Therefore $z$ vanishes on $B_{r}(y_{0})\cap D$
for $s\leq\min(T,(R-r)/\sigma)$ with $\sigma=eC_{3}$, which has the stated
dependence.
\end{proof}

The speed $\sigma$ produced by Lemma \ref{lem:finiteprop} depends on the
solutions through the quantity $M$, and is in general larger than one.  This is
of no consequence for what follows, since the only role of finite propagation
here is to guarantee that two solutions constructed from data which agree on a
ball agree on a smaller ball for a time which is bounded below independently of
$\lambda$ and of $\varepsilon$.

\subsection{Local existence in uniformly local norms}\label{sec:localexist}

We can now construct solutions of \eqref{eq:eqD} on a time interval which
depends on the data only through their energy norm, and not on
$\varepsilon$; the passage from the energy norm on $D$ to uniformly local
norms is made in Lemma \ref{lem:glue}.

\begin{lemma}\label{lem:localexist}
Let $N>0$.  There exists a time $T_{\rm loc}=T_{\rm loc}(N;n,p)>0$ such that,
for every bounded Lipschitz domain $D$, every $\varepsilon>0$ and every
$(v_{0},v_{1})\in H_{0}^{1}(D)\times L^{2}(D)$ with
$\|v_{0}\|_{H^{1}}+\|v_{1}\|_{2}\leq N$, problem \eqref{eq:eqD} with data
$(v_{0},v_{1})$ has a unique weak solution on $[0,T_{\rm loc}]$ in the class
\eqref{eq:class}, and
\begin{equation}\label{eq:loc-bound}
\sup_{0\leq s\leq T_{\rm loc}}
\bigl(\|v(s)\|_{H^{1}}+\|v_{s}(s)\|_{2}\bigr)\leq3N.
\end{equation}
The time $T_{\rm loc}$ does not depend on $D$, on $\varepsilon$ or on $q$.
\end{lemma}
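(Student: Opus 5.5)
The plan is a contraction (or Schauder-free fixed point via Picard iteration) argument in which the source term is frozen and the damping is handled entirely by the auxiliary problem of Lemma \ref{lem:aux}.

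Let $T>0$, to be fixed, and consider the set $X_{T}$ of functions $w\in C([0,T];H_{0}^{1}(D))\cap C^{1}([0,T];L^{2}(D))$ with $w(0)=v_{0}$ and
\[
\sup_{0\leq s\leq T}\bigl(\|w(s)\|_{H^{1}}+\|w_{s}(s)\|_{2}\bigr)\leq3N.
\]
We equip it with the metric $d(w,\tilde w)=\sup_{s}\bigl(\|w-\tilde w\|_{H^{1}}+\|w_{s}-\tilde w_{s}\|_{2}\bigr)(s)$; it is complete. For $w\in X_{T}$ we set $h=w|w|^{p-1}$. This belongs to $C([0,T];L^{2}(D))$ because the extension by zero lies in $H^{1}(\mathbb{R}^{n})\hookrightarrow L^{2p}(\mathbb{R}^{n})$ under \eqref{eq:exponents}, with a constant $C_{S}(n,p)$ independent of $D$ (the full-space embedding applied to the zero extension, as in Remarks \ref{rem:main} $(v)$). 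Thus $\|h(s)\|_{2}\leq C_{S}^{p}(3N)^{p}$. We then define $\Phi(w)$ to be the solution of \eqref{eq:aux-pb} given by Lemma \ref{lem:aux}, which lies in the class \eqref{eq:class}.

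\emph{Step 1: $\Phi$ maps $X_{T}$ into itself.} The energy identity \eqref{eq:energy-id} lets us drop the damping term, which is nonnegative, and gives
\[
\mathcal{N}(t)^{2}:=\|\Phi(w)_{s}(t)\|_{2}^{2}+\|\nabla\Phi(w)(t)\|_{2}^{2}\leq N^{2}+2\int_{0}^{t}\|h\|_{2}\,\mathcal{N}.
\]
Hence $\mathcal{N}(t)\leq N+C_{S}^{p}(3N)^{p}t$. The $L^{2}$ part follows from $\|\Phi(w)(t)\|_{2}\leq\|v_{0}\|_{2}+t\sup\|\Phi(w)_{s}\|_{2}$, i.e.\ from \eqref{eq:L2loc}. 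With the choice $T\leq c\min\{1,N^{1-p}\}$ for a small $c(n,p)$, the total norm is bounded by $\|v_{0}\|_{H^{1}}+\|v_{1}\|_{2}+$ small, which is at most $3N$ (the factor $3$ absorbs the passage between the norm of the pair and $\sqrt{\mathcal{N}^{2}+\|\cdot\|_{2}^{2}}$).

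\emph{Step 2: contraction.} For $w,\tilde w\in X_{T}$, apply Lemma \ref{lem:loc-diff} with $\chi\equiv1$ on $\overline D$ to $z=\Phi(w)-\Phi(\tilde w)$. The crucial point is that the monotone damping difference has been discarded, so $\varepsilon$ disappears. This yields
\[
\|z_{s}(t)\|_{2}^{2}+\|\nabla z(t)\|_{2}^{2}\leq2\int_{0}^{t}\|h-\tilde h\|_{2}\|z_{s}\|_{2}.
\]
The mean value estimate with Hölder, $\tfrac12=\tfrac{p-1}{2p}+\tfrac1{2p}$, then gives $\|h-\tilde h\|_{2}\leq p\,2^{p-1}C_{S}^{p}(3N)^{p-1}d(w,\tilde w)$. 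Adding \eqref{eq:L2loc} for the $L^{2}$ part, we obtain $d(\Phi w,\Phi\tilde w)\leq C(n,p)N^{p-1}T\,d(w,\tilde w)$. Shrinking $c$ makes this a contraction. The fixed point is a weak solution of \eqref{eq:eqD} satisfying \eqref{eq:loc-bound}, and uniqueness in the class \eqref{eq:class} is Lemma \ref{lem:unique}.

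The only delicate point, and the one I would check most carefully, is that no constant depends on $D$, $\varepsilon$ or $q$. For $D$, we always use the full-space Sobolev constant on zero extensions and never a Poincaré inequality, which is why the full $H^{1}$ norm, including $\|v\|_{2}$, is propagated via \eqref{eq:L2loc}. For $\varepsilon$ and $q$, they enter only through the sign of the damping term in \eqref{eq:energy-id} and through the discarded monotone term in \eqref{eq:loc-diff}. This gives $T_{\rm loc}=c(n,p)\min\{1,N^{1-p}\}$.
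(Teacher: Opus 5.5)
Your proposal is correct and follows the paper's proof essentially step for step: the source is frozen and Lemma \ref{lem:aux} solves the monotone auxiliary problem; the self-map property comes from \eqref{eq:energy-id} with the damping discarded; the contraction comes from Lemma \ref{lem:loc-diff} with $\chi\equiv1$ together with \eqref{eq:L2loc}; and uniqueness comes from Lemma \ref{lem:unique}. In the self-map step the paper's bound is $(1+\sqrt{2})N+CT<3N$, which is what your remark about the factor $3$ amounts to, since passing from the quadratic energy to the sum of norms costs a factor $\sqrt{2}$. The only differences are cosmetic: your extra constraint $w(0)=v_{0}$ in $X_{T}$, and the explicit form $T_{\rm loc}=c(n,p)\min\{1,N^{1-p}\}$, which the paper leaves implicit as $\min\{T_{1},T_{2}\}$.
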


\begin{proof}
Let
\[
X_{T}=\Bigl\{u\in C([0,T];H_{0}^{1}(D))\cap C^{1}([0,T];L^{2}(D)):\
\|u\|_{X_{T}}:=\sup_{[0,T]}\bigl(\|u\|_{H^{1}}+\|u_{s}\|_{2}\bigr)\leq3N\Bigr\},
\]
which is a complete metric space, and for $u\in X_{T}$ let $\mathcal{F}(u)$ be
the solution of \eqref{eq:aux-pb} with right-hand side $h=u|u|^{p-1}$, given by
Lemma \ref{lem:aux}.  Extension by zero and the Sobolev embedding in
$\mathbb{R}^{n}$ give $\|w\|_{L^{2p}(D)}\leq C_{4}\|w\|_{H^{1}}$ for
$w\in H_{0}^{1}(D)$, with $C_{4}=C_{4}(n,p)$ independent of $D$; hence
$\|h\|_{2}\leq\|u\|_{2p}^{p}\leq(3C_{4}N)^{p}$, and $u\mapsto u|u|^{p-1}$ is
continuous from $H_{0}^{1}(D)$ into $L^{2}(D)$, so that
$h\in C([0,T];L^{2}(D))$ as required in Lemma \ref{lem:aux}.

We check that $\mathcal{F}$ maps $X_{T}$ into itself for $T$ small.  Write
$Y(t)=(\|\mathcal{F}(u)_{s}(t)\|_{2}^{2}+\|\nabla\mathcal{F}(u)(t)\|_{2}^{2})^{1/2}$.
The hypothesis $\|v_{0}\|_{H^{1}}+\|v_{1}\|_{2}\leq N$ gives
$Y(0)\leq\|\nabla v_{0}\|_{2}+\|v_{1}\|_{2}\leq N$, and discarding the damping
term in \eqref{eq:energy-id} yields
$\frac{1}{2}Y(t)^{2}\leq\frac{1}{2}Y(0)^{2}+\int_{0}^{t}\|h\|_{2}Y$, whence
$Y(t)\leq N+T(3C_{4}N)^{p}$ on $[0,T]$.  Consequently
\[
\sup_{[0,T]}\bigl(\|\mathcal{F}(u)_{s}\|_{2}+\|\nabla\mathcal{F}(u)\|_{2}\bigr)
\leq\sqrt{2}\bigl(N+T(3C_{4}N)^{p}\bigr),
\qquad
\|\mathcal{F}(u)(t)\|_{2}\leq\|v_{0}\|_{2}+\int_{0}^{t}\|\mathcal{F}(u)_{s}\|_{2},
\]
so that $\|\mathcal{F}(u)\|_{X_{T}}\leq(1+\sqrt{2})N+CT$ with $C=C(N;n,p)$.
Since $1+\sqrt{2}<3$, the map $\mathcal{F}$ sends $X_{T}$ into
itself as soon as $T\leq T_{1}(N;n,p)$.

Let now $u,\bar{u}\in X_{T}$, put $h=u|u|^{p-1}$, $\bar{h}=\bar{u}|\bar{u}|^{p-1}$
and $w=\mathcal{F}(u)-\mathcal{F}(\bar{u})$, and write
$d=\|u-\bar{u}\|_{X_{T}}$.  By the mean value theorem, H\"older's inequality
with the exponents given by $\frac{1}{2}=\frac{p-1}{2p}+\frac{1}{2p}$, and the
embedding $\|\cdot\|_{L^{2p}(D)}\leq C_{4}\|\cdot\|_{H^{1}}$ recalled above,
\begin{equation}\label{eq:h-lip}
\|h-\bar{h}\|_{2}
\leq p\bigl\||u|+|\bar{u}|\bigr\|_{2p}^{p-1}\|u-\bar{u}\|_{2p}
\leq C_{8}\,d,
\qquad
C_{8}=p\,(6C_{4}N)^{p-1}C_{4},
\end{equation}
since $\||u|+|\bar{u}|\|_{2p}\leq2\cdot3C_{4}N$ for $u,\bar{u}\in X_{T}$; the
constant $C_{8}$ depends only on $N$, $n$ and $p$.  The functions
$\mathcal{F}(u)$ and $\mathcal{F}(\bar{u})$ are weak solutions of
\eqref{eq:aux-eq} with the same data $(v_{0},v_{1})$ and with right-hand sides
$h$ and $\bar{h}$, so Lemma \ref{lem:loc-diff}, applied with $\chi$ equal to one
on a ball containing $\overline{D}$ and hence with $\nabla\chi=0$ on $D$, gives
\[
\frac{1}{2}\int_{D}\bigl(w_{s}^{2}+|\nabla w|^{2}\bigr)(t)\,dy
\leq\int_{0}^{t}\!\!\int_{D}(h-\bar{h})\,w_{s}\,dy\,ds,
\qquad 0\leq t\leq T,
\]
the boundary terms at $s=0$ vanishing because $w(0)=w_{s}(0)=0$.  The damping
does not appear in this inequality, having been discarded in Lemma
\ref{lem:loc-diff} together with the monotone term, so that neither
$\varepsilon$ nor $q$ enters the estimate.  Adding \eqref{eq:L2loc} with the same $\chi$, and setting
$\Psi(t)=\bigl(\|w(t)\|_{2}^{2}+\|\nabla w(t)\|_{2}^{2}+\|w_{s}(t)\|_{2}^{2}\bigr)^{1/2}$,
we obtain
\[
\frac{1}{2}\Psi(t)^{2}
\leq\int_{0}^{t}\bigl(\|h-\bar{h}\|_{2}+\|w\|_{2}\bigr)\|w_{s}\|_{2}\,ds
\leq\int_{0}^{t}\bigl(C_{8}d+\Psi\bigr)\Psi\,ds
\]
by \eqref{eq:h-lip}.  Writing $\Psi^{*}=\sup_{[0,T]}\Psi$ and taking the
supremum over $t\in[0,T]$ gives
$\frac{1}{2}(\Psi^{*})^{2}\leq T(C_{8}d+\Psi^{*})\Psi^{*}$, whence
$\Psi^{*}\leq4TC_{8}d$ as soon as $T\leq\frac{1}{4}$.  Since
$\|w\|_{X_{T}}\leq\sqrt{2}\,\Psi^{*}$, this reads
\[
\|\mathcal{F}(u)-\mathcal{F}(\bar{u})\|_{X_{T}}
\leq4\sqrt{2}\,C_{8}\,T\,\|u-\bar{u}\|_{X_{T}},
\]
which is a contraction for $T\leq T_{2}(N;n,p)$.  Banach's fixed point theorem
on $[0,T_{\rm loc}]$ with $T_{\rm loc}=\min\{T_{1},T_{2}\}$ produces a solution
$v$; it solves \eqref{eq:aux-pb} with $h=v|v|^{p-1}$, so that
$v_{s}\in L^{q+1}$ by \eqref{eq:energy-id}, and it is unique by Lemma
\ref{lem:unique}.  None of the constants above involves the size of
$\varepsilon$.
\end{proof}

Lemma \ref{lem:localexist} controls the existence time by the full energy norm
of the data on $D$.  On the dilated domain $\Omega_{\lambda}$ this norm grows
with $\lambda$, and the lemma alone would give an existence time tending to
zero.  Finite propagation is what allows us to replace the full norm by the
uniformly local one.

\begin{lemma}\label{lem:glue}
Let $\lambda\geq1$, let $\varepsilon>0$ and let
$(v_{0},v_{1})\in H_{0}^{1}(\Omega_{\lambda})\times L^{2}(\Omega_{\lambda})$
satisfy
\begin{equation}\label{eq:unifloc-hyp}
\sup_{z\in\mathbb{R}^{n}}
\Bigl(\|v_{0}\|_{H^{1}(B_{8}(z)\cap\Omega_{\lambda})}
+\|v_{1}\|_{L^{2}(B_{8}(z)\cap\Omega_{\lambda})}\Bigr)\leq N.
\end{equation}
Then there exists $S_{1}=S_{1}(N;n,p)>0$, independent of $\lambda$, of
$\varepsilon$ and of $q$, such that problem \eqref{eq:eqD} on
$D=\Omega_{\lambda}$, with damping coefficient $\varepsilon$ and data
$(v_{0},v_{1})$, has a unique weak solution on $[0,S_{1}]$ in the class
\eqref{eq:class}, and
\begin{equation}\label{eq:unifloc-bound}
\sup_{z\in\mathbb{R}^{n}}\ \sup_{0\leq s\leq S_{1}}
\Bigl(\|v(s)\|_{H^{1}(B_{6}(z)\cap\Omega_{\lambda})}
+\|v_{s}(s)\|_{L^{2}(B_{6}(z)\cap\Omega_{\lambda})}\Bigr)\leq C_{5}'N,
\end{equation}
where $C_{5}'$ depends only on $n$.
\end{lemma}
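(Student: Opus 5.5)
We will localize, solve on small bounded domains with Lemma~\ref{lem:localexist}, and glue the pieces with Lemma~\ref{lem:finiteprop}. Fix a cutoff $\psi\in C_c^\infty(B_{2}(0))$ with $\psi=1$ on $B_{1}(0)$, $0\le\psi\le1$, $|\nabla\psi|\le2$. For each centre $z\in\mathbb{R}^n$ we truncate the data:
\[
v_0^z=\psi(\cdot-z)\,v_0,\qquad v_1^z=\psi(\cdot-z)\,v_1 .
\]
These are elements of $H_0^1(D_z)\times L^2(D_z)$, where $D_z=\Omega_\lambda\cap B_{2}(z)$. Since $\Omega_\lambda\cap B_2(z)$ may fail to be Lipschitz, we instead take $D_z$ to be a bounded Lipschitz domain with $\Omega_\lambda\cap B_2(z)\subset D_z\subset\Omega_\lambda$ (for instance, one may simply use $D_z=\Omega_\lambda$, which is bounded and Lipschitz). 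The key point is that $\|v_0^z\|_{H^1}+\|v_1^z\|_2\le 3N$, because the truncated data are supported in $B_2(z)\subset B_8(z)$ and the gradient of $\psi$ is bounded by $2$. Lemma~\ref{lem:localexist}, applied on $D=\Omega_\lambda$ with $3N$ in place of $N$, then yields a solution $v^z$ on $[0,T_{\rm loc}(3N)]$ with $\sup_s(\|v^z\|_{H^1(\Omega_\lambda)}+\|v^z_s\|_2)\le 9N$. In particular $\sup_s\|v^z(s)\|_{L^{2p}}\le 9C_4N=:M$, uniformly in $z$, $\lambda$ and $\varepsilon$. Using the whole domain $\Omega_\lambda$ avoids any question about the regularity of subdomains, and the existence time does not depend on the size of $D$.

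Next comes the gluing, which is the main point. Whenever $|z-z'|<1/2$, the data of $v^z$ and $v^{z'}$ coincide on $B_{1/2}(z)\cap\Omega_\lambda$, since both cutoffs equal one there. Lemma~\ref{lem:finiteprop}, applied with $y_0=z$, $R=1/2$ (rescaled), or more simply with $R=1$ centred at $z$ when $|z-z'|$ is small enough, gives $v^z=v^{z'}$ on $B_{r}(z)\cap\Omega_\lambda$ for $s\le S_1:=\min(T_{\rm loc},(R-r)/\sigma(M))$. Here $\sigma(M)$ depends only on $N,n,p$. The precise choice is to use $R=1$, $r=1/2$ at the centre $z$: both $v^z$ and $v^{z'}$ have data equal to $(v_0,v_1)$ on $B_1(z)$ provided $|z-z'|\le\,$(something small), so we take $\psi=1$ on $B_2$ and support in $B_4$ to give room. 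We then define
\[
v(y,s)=v^{z}(y,s)\quad\text{for } y\in B_{1/2}(z)\cap\Omega_\lambda .
\]
This is well defined by the compatibility just proved. We must check that $v$ lies in the class \eqref{eq:class} on $\Omega_\lambda$ and satisfies \eqref{eq:weak}. Only finitely many balls are needed, since $\Omega_\lambda$ is bounded. We take a partition of unity $\{\theta_j\}$ subordinate to the finite cover by balls $B_{1/2}(z_j)$ and write $v=\sum_j\theta_j v^{z_j}$; in fact $v=v^{z_j}$ on each ball. Regularity follows from that of each $v^{z_j}$. For the weak formulation, we decompose a test function as $\varphi=\sum\theta_j\varphi$ and use that $v=v^{z_j}$ on $\operatorname{supp}\theta_j$; each $v^{z_j}$ satisfies \eqref{eq:weak} on $\Omega_\lambda$, and all terms are local. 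Uniqueness follows from Lemma~\ref{lem:unique}. The delicate point is that finite propagation requires both functions to be solutions on the same domain with a common $L^{2p}$ bound $M$, which is exactly why we solve each local problem on all of $\Omega_\lambda$ rather than on a small subdomain.

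Finally, the bound \eqref{eq:unifloc-bound} follows by covering $B_6(z)$ by $c_n$ balls $B_{1/2}(z_j)$. On each of these, $v=v^{z_j}$, whose norm is at most $9N$. Summing over the balls gives $C_5'N$ with $C_5'=9c_n$, which depends only on $n$. The radii $8$ and $6$ in the statement leave ample room for these cutoffs: enlarging the cutoff support to $B_4$ keeps $\|v_0^z\|_{H^1}\le 3N$ through \eqref{eq:unifloc-hyp}.
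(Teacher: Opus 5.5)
Your proposal is correct and follows essentially the paper's route: you truncate the data around each centre, solve on all of $\Omega_{\lambda}$ with Lemma~\ref{lem:localexist} so that the $L^{2p}$ bound $M$ and the speed $\sigma$ are uniform, identify neighbouring pieces via Lemma~\ref{lem:finiteprop} with $R=1$, $r=\frac12$, and glue with a partition of unity; the paper differs only in using cutoffs equal to one on $B_{7}$ and a maximal $6$-separated set of centres instead of your radii $2$, $4$, $\frac12$. In the final count, an arbitrary finite cover by radius-$\frac12$ balls need not have bounded multiplicity, so either choose the centres with bounded overlap (as the paper's separated set does) or use, as your construction allows, that $v=v^{w}$ on $B_{1/2}(w)$ for every $w$ and cover $B_{6}(z)$ by $c_{n}$ freely chosen balls; you should also note that the data are attained, which is immediate since $\psi=1$ on $B_{2}$.
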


\begin{proof}
Since $\Omega_{\lambda}$ is bounded, we may fix once and for all a maximal
$6$-separated subset $\{z_{j}\}_{j=1}^{J}$ of $\overline{\Omega_{\lambda}}$, so
that the balls $B_{6}(z_{j})$ cover $\overline{\Omega_{\lambda}}$ and every
ball of radius $6$ meets at most $C(n)$ of them, together with a smooth
partition of unity $\{\psi_{j}\}_{j=1}^{J}$ subordinate to this cover; the
number $J$ depends on $\lambda$, but it enters no estimate and is used
qualitatively only.  For $1\leq j\leq J$ let $\xi_{j}$ be a smooth function
equal to one on $B_{7}(z_{j})$, supported in $B_{8}(z_{j})$, with derivatives
bounded independently of $j$.  Then
$(\xi_{j}v_{0},\xi_{j}v_{1})\in H_{0}^{1}(\Omega_{\lambda})\times
L^{2}(\Omega_{\lambda})$, with norms bounded by $C_{5}N$ in view of
\eqref{eq:unifloc-hyp}, where $C_{5}$ depends only on $n$; indeed,
approximating $v_{0}$ by functions
$w_{k}\in C_{c}^{\infty}(\Omega_{\lambda})$ gives
$\xi_{j}w_{k}\to\xi_{j}v_{0}$ in $H^{1}$.  Lemma \ref{lem:localexist},
applied on $\Omega_{\lambda}$ with these data, produces solutions $v^{j}$ on
$[0,T_{\rm loc}(C_{5}N)]$ satisfying \eqref{eq:loc-bound}, with $T_{\rm loc}$
independent of $j$, of $\lambda$, of $\varepsilon$ and of $q$; in particular
$\sup_{s}\|v^{j}(s)\|_{L^{2p}(\Omega_{\lambda})}\leq M_{0}:=3C_{4}C_{5}N$ by the
Sobolev embedding.  Let $\sigma_{0}=\sigma(M_{0},n,p,\frac{1}{2})$ be the speed
given by Lemma \ref{lem:finiteprop} with $r_{0}=\frac{1}{2}$, and set
\[
S_{1}=\min\Bigl\{T_{\rm loc}(C_{5}N;n,p),\ \frac{1}{2\sigma_{0}}\Bigr\}.
\]

Let $1\leq i,j\leq J$ and let $y\in B_{6}(z_{i})\cap B_{6}(z_{j})$.  Then
$B_{1}(y)\subset B_{7}(z_{i})\cap B_{7}(z_{j})$, so the truncated data used to
construct $v^{i}$ and $v^{j}$ both coincide with $(v_{0},v_{1})$ on
$B_{1}(y)\cap\Omega_{\lambda}$; Lemma \ref{lem:finiteprop} with $y_{0}=y$,
$R=1$ and $r=r_{0}=\frac{1}{2}$ then gives $v^{i}=v^{j}$ almost everywhere on
$(B_{1/2}(y)\cap\Omega_{\lambda})\times[0,S_{1}]$, because
$S_{1}\leq\frac{1}{2\sigma_{0}}$.  Covering $B_{6}(z_{i})\cap B_{6}(z_{j})$ by
countably many such balls, we obtain $v^{i}=v^{j}$ almost everywhere on
$(B_{6}(z_{i})\cap B_{6}(z_{j})\cap\Omega_{\lambda})\times[0,S_{1}]$; only the
finitely many pairs $(i,j)$ occur here, so that all the exceptional sets
together form a single set of measure zero.

Set $v=\sum_{j}\psi_{j}v^{j}$.  By the preceding paragraph and
$\sum_{j}\psi_{j}=1$ on $\Omega_{\lambda}$, the function $v$ coincides almost
everywhere with $v^{j}$ on $(B_{6}(z_{j})\cap\Omega_{\lambda})\times[0,S_{1}]$
for every $j$, and it belongs to the class \eqref{eq:class}.  Moreover $v$ is a
weak solution of \eqref{eq:eqD} on $D=\Omega_{\lambda}$: given a test function
$\varphi$ as in \eqref{eq:weak}, each $\psi_{j}\varphi$ is again an admissible
test function, supported in $B_{6}(z_{j})$ where $v=v^{j}$; writing
\eqref{eq:weak} for $v^{j}$ against $\psi_{j}\varphi$ and summing over $j$
gives \eqref{eq:weak} for $v$ against $\varphi$.  The data are attained, since
$\xi_{j}=1$ on $B_{7}(z_{j})\supset B_{6}(z_{j})$, so that
$(v,v_{s})(0)=(v_{0},v_{1})$ on $B_{6}(z_{j})\cap\Omega_{\lambda}$ for every
$j$.  Finally, for an arbitrary $z\in\mathbb{R}^{n}$ the ball $B_{6}(z)$ meets
at most $C(n)$ of the balls $B_{6}(z_{j})$, on each of which $v$ agrees almost
everywhere with $v^{j}$; summing the bounds \eqref{eq:loc-bound} for those
finitely many indices gives \eqref{eq:unifloc-bound} with a constant $C_{5}'$
depending only on $n$.  Uniqueness is Lemma \ref{lem:unique}.
\end{proof}

The solution provided by Lemma \ref{lem:glue} exists on a time interval whose
length depends on the data only through the uniformly local norm
\eqref{eq:unifloc-hyp}, and not on the damping coefficient.  The lemma also
yields a blow-up alternative in the same norm, which is the form in which it
will be used in the overdamped regime.

\begin{lemma}\label{lem:blowupalt}
Let $\lambda\geq1$, let $\varepsilon>0$ and let $v$ be the maximal weak
solution of \eqref{eq:eqD} on $D=\Omega_{\lambda}$, with damping coefficient
$\varepsilon$, in the class \eqref{eq:class}.  If its maximal existence time
$S_{\max}$ is finite, then
\[
\limsup_{s\uparrow S_{\max}}\ \sup_{z\in\mathbb{R}^{n}}
\Bigl(\|v(s)\|_{H^{1}(B_{8}(z)\cap\Omega_{\lambda})}
+\|v_{s}(s)\|_{L^{2}(B_{8}(z)\cap\Omega_{\lambda})}\Bigr)=\infty.
\]
\end{lemma}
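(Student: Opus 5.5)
We argue by contradiction, using Lemma \ref{lem:glue} as a restart principle.  Suppose $S_{\max}<\infty$ but the $\limsup$ in the statement is finite.  Then there is a number $N<\infty$ and a time $s_{0}<S_{\max}$ such that
\[
\sup_{z\in\mathbb{R}^{n}}
\Bigl(\|v(s)\|_{H^{1}(B_{8}(z)\cap\Omega_{\lambda})}
+\|v_{s}(s)\|_{L^{2}(B_{8}(z)\cap\Omega_{\lambda})}\Bigr)\leq N
\qquad\text{for all }s\in[s_{0},S_{\max}).
\]
Let $S_{1}=S_{1}(N;n,p)>0$ be the time given by Lemma \ref{lem:glue}.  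This time does not depend on the initial instant, since the equation is autonomous.  Fix $s_{*}\in[s_{0},S_{\max})$ with $s_{*}>S_{\max}-S_{1}/2$.

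The data $(v(s_{*}),v_{s}(s_{*}))$ belong to $H_{0}^{1}(\Omega_{\lambda})\times L^{2}(\Omega_{\lambda})$ by \eqref{eq:class}, and they satisfy \eqref{eq:unifloc-hyp} with this $N$.  Lemma \ref{lem:glue} therefore produces a weak solution $\tilde v$ on $[0,S_{1}]$ with these data.  We then define $\bar v=v$ on $[0,s_{*}]$ and $\bar v(s)=\tilde v(s-s_{*})$ on $[s_{*},s_{*}+S_{1}]$.

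We next check that $\bar v$ lies in the class \eqref{eq:class} on $[0,s_{*}+S_{1}]$.
\begin{itemize}
\item Continuity in $H_{0}^{1}$ and $C^{1}$ continuity in $L^{2}$ hold on each piece, and the two pieces agree with their first time derivatives at $s_{*}$.
\item The condition $\bar v_{s}\in L^{q+1}$ holds on each piece, hence on the union.
\end{itemize}
We also check the weak formulation \eqref{eq:weak} for $0\leq s_{1}\leq s_{2}\leq s_{*}+S_{1}$.
\begin{itemize}
\item If $s_{1}$ and $s_{2}$ lie on the same side of $s_{*}$, \eqref{eq:weak} is the formulation of the corresponding piece.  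For $\tilde v$ this uses the translated test function, which is legitimate because \eqref{eq:weak} is invariant under time translation.
\item If $s_{1}<s_{*}<s_{2}$, we split the time integral at $s_{*}$ and add the two identities.  The boundary terms $\int u_{s}\varphi$ at $s_{*}$ cancel, because $\bar v_{s}$ is continuous there with values in $L^{2}$.
\end{itemize}

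Thus $\bar v$ is a weak solution on $[0,s_{*}+S_{1}]$ with $s_{*}+S_{1}>S_{\max}$.  By Lemma \ref{lem:unique}, it coincides with $v$ on $[0,S_{\max})$.  Hence $v$ extends beyond $S_{\max}$, which contradicts maximality.

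The only point requiring care is that the existence time of Lemma \ref{lem:glue} depends solely on $N$, $n$, $p$ and not on the data in any other way, nor on $\lambda$ or $\varepsilon$.  This is exactly what the lemma asserts, so the restart yields a uniform time step $S_{1}$ near $S_{\max}$.

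Without that uniformity, the restarts could shrink and accumulate at $S_{\max}$, and no extension would follow.  With it, the uniform bound over $[s_{0},S_{\max})$ guarantees a single step long enough to pass $S_{\max}$.

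The concatenation is routine given the time-translation invariance of \eqref{eq:weak}.  I expect no serious obstacle; the main thing to state explicitly is that the finite $\limsup$ implies a uniform bound on a whole final interval, not merely along a sequence of times.
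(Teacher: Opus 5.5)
Your proof is correct and follows the paper's argument: restart via Lemma \ref{lem:glue} at a time within $S_{1}$ of $S_{\max}$, concatenate using the additivity of \eqref{eq:weak} in $[s_{1},s_{2}]$, and contradict maximality via Lemma \ref{lem:unique}. Your closing remark needs correcting, though: the restart only uses the bound at the single time $s_{*}$ with $S_{\max}-s_{*}<S_{1}(N)$, so no bound on a whole final interval is needed, and the paper accordingly assumes boundedness only along a sequence $s_{j}\uparrow S_{\max}$, which yields the stronger conclusion that the quantity tends to infinity.
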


\begin{proof}
Suppose that the quantity in question were bounded by some $N'$ along a
sequence $s_{j}\uparrow S_{\max}$.  Applying Lemma \ref{lem:glue} with the data
$(v,v_{s})(s_{j})$, for an index $j$ such that $S_{\max}-s_{j}<S_{1}(N')$, and
using that \eqref{eq:eqD} does not depend on time explicitly, we obtain
a solution on $[s_{j},s_{j}+S_{1}(N')]$.  Concatenating it with $v|_{[0,s_{j}]}$
produces a function which lies in the class \eqref{eq:class} on
$[0,s_{j}+S_{1}(N')]$, because the two pieces share the value
$(v,v_{s})(s_{j})$ in $H_{0}^{1}\times L^{2}$, and which satisfies
\eqref{eq:weak} on that interval, since \eqref{eq:weak} is additive in
$[s_{1},s_{2}]$.  We would thus extend the solution beyond $S_{\max}$.  By
Lemma \ref{lem:unique} the extension coincides with $v$ where both are defined,
which contradicts the maximality of $S_{\max}$.
\end{proof}

Lemmas \ref{lem:data} and \ref{lem:glue} together are already enough to prove
the first lower bound of Theorem \ref{thm:lower}, and this will be done in
Section \ref{sec:proof}.  For the second lower bound, the size of the damping
coefficient has to be exploited, and this is the object of the last group of
auxiliary results.

\subsection{Growth of the local energy for a large damping coefficient}\label{sec:growth}

In this subsection we assume \eqref{eq:data}, \eqref{eq:exponents},
$\varrho\geq1$ and $q_{c}<q<p$, so that
\[
\lambda=\varrho^{(p-1)/2}\geq1,
\qquad
A=\varepsilon_{\varrho}=\varrho^{\kappa}\geq1,
\qquad
\kappa=\frac{q(p+1)-2p}{2}>0,
\]
and we denote by $v$ the maximal weak solution of
\eqref{eq:rescaled-pb}--\eqref{eq:rescaled-data}.  Such a maximal solution
exists: by Lemma \ref{lem:unique} any two weak solutions in the class
\eqref{eq:class} with the data \eqref{eq:rescaled-data} coincide on the
intersection of their intervals of existence, so the union of all of them is
again a weak solution, defined on an interval $[0,S_{\max})$; by construction
$S_{\max}$ is the maximal existence time $S^{*}_{\lambda}$ of Lemma
\ref{lem:rescale}.  We extend $v$ by zero to the whole of $\mathbb{R}^{n}$,
which is legitimate because $v(s)\in H_{0}^{1}(\Omega_{\lambda})$.  We fix once
and for all a function $\chi\in C_{c}^{\infty}(\mathbb{R}^{n})$ with
\begin{equation}\label{eq:cutoff}
0\leq\chi\leq1,
\qquad
\chi\equiv1\ \text{on }B_{4}(0),
\qquad
\operatorname{supp}\chi\subset B_{5}(0),
\qquad
|\nabla\chi|\leq2,
\end{equation}
and we set $\chi_{z}=\chi(\cdot-z)$.  The quantities to be estimated are the
uniformly local energy and the corresponding dissipation,
\begin{equation}\label{eq:EandD}
\mathcal{E}_{z}(s)=\frac{1}{2}\int_{\mathbb{R}^{n}}
\chi_{z}^{2}\bigl(v_{s}^{2}+|\nabla v|^{2}+v^{2}\bigr)dy,
\qquad
\mathcal{E}(s)=\sup_{z\in\mathbb{R}^{n}}\mathcal{E}_{z}(s),
\qquad
\mathcal{D}_{z}(s)=A\int_{\mathbb{R}^{n}}\chi_{z}^{2}|v_{s}|^{q+1}dy.
\end{equation}
Three elementary observations will be used repeatedly.  First, since
$\chi_{z'}\equiv1$ on $B_{4}(z')$, every ball $B_{r}(z)$ with $r\geq1$ is
covered by a number of balls $B_{4}(z_{i})$ depending only on $r$ and $n$, so
that
\begin{equation}\label{eq:cover}
\|v(s)\|_{H^{1}(B_{r}(z))}^{2}+\|v_{s}(s)\|_{L^{2}(B_{r}(z))}^{2}
\leq C(r,n)\,\mathcal{E}(s),
\qquad z\in\mathbb{R}^{n},\ r\geq1.
\end{equation}
Second, $\mathcal{E}$ is measurable: the functions $\chi_{z}$ being translates
of a single function and $v_{s}^{2}+|\nabla v|^{2}+v^{2}$ being integrable for
each fixed $s$, the continuity of translations in $L^{1}$ makes
$z\mapsto\mathcal{E}_{z}(s)$ continuous, so that
\begin{equation}\label{eq:meas}
\mathcal{E}(s)=\sup_{z\in\mathbb{Q}^{n}}\mathcal{E}_{z}(s),
\end{equation}
which exhibits $\mathcal{E}$ as a countable supremum of continuous functions of
$s$, hence as a lower semicontinuous function.  Third, $\mathcal{E}$ is finite
on $[0,S]$ for every $S<S_{\max}$, being bounded there by
$\|v(s)\|_{H^{1}(\Omega_{\lambda})}^{2}+\|v_{s}(s)\|_{L^{2}(\Omega_{\lambda})}^{2}$,
which is continuous in $s$.  We also record that the exponent produced by
Young's inequality in \eqref{eq:young-intro} is admissible for the Sobolev
embedding: since $q>1$ we have $(q+1)/q<2$, and $2p<2n/(n-2)$ for $n\geq3$ is
the assumption $p<n/(n-2)$ in \eqref{eq:exponents}, so that
\begin{equation}\label{eq:admissible}
\frac{p(q+1)}{q}<2p<
\begin{cases}
\dfrac{2n}{n-2}, & n\geq3,\\[2mm]
\infty, & n\leq2,
\end{cases}
\qquad\text{hence}\qquad
H^{1}(B)\hookrightarrow L^{p(q+1)/q}(B)
\end{equation}
on balls $B$, with a constant depending only on $n$, $p$, $q$ and the radius.

The next lemma is the key step for the second lower bound.  It states that the
uniformly local energy of the rescaled solution increases at the rate
$A^{-1/q}$, which is small when the damping coefficient is large.  Only
$A\geq1$ is used, and not the strict inequality $q>q_{c}$.

\begin{lemma}\label{lem:growth}
Assume \eqref{eq:exponents} and $A\geq1$.  There exists a constant
$C_{6}=C_{6}(n,p,q)$ such that, for every $z\in\mathbb{R}^{n}$ and every
$0\leq s<S_{\max}$,
\begin{equation}\label{eq:growth}
\mathcal{E}_{z}(s)+\frac{1}{4}\int_{0}^{s}\mathcal{D}_{z}(\tau)\,d\tau
\leq\mathcal{E}_{z}(0)
+C_{6}A^{-1/q}\int_{0}^{s}\bigl(1+\mathcal{E}(\tau)\bigr)^{\Gamma}d\tau,
\qquad
\Gamma=\frac{p(q+1)}{2q}>1.
\end{equation}
\end{lemma}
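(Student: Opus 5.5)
We apply Lemma \ref{lem:loc-energy} on $D=\Omega_{\lambda}$ with $\varepsilon=A$, $h=v|v|^{p-1}$ (which lies in $C([0,S];L^{2})$ for $S<S_{\max}$ by the embedding $H_{0}^{1}\hookrightarrow L^{2p}$) and cutoff $\chi_{z}$, and add \eqref{eq:L2loc} with the same cutoff. This gives, for $0\le s<S_{\max}$,
\[
\mathcal{E}_{z}(s)+\int_{0}^{s}\mathcal{D}_{z}\,d\tau
=\mathcal{E}_{z}(0)+\int_{0}^{s}\!\!\int\chi_{z}^{2}\bigl(v|v|^{p-1}+v\bigr)v_{s}
-2\int_{0}^{s}\!\!\int\chi_{z}v_{s}\nabla\chi_{z}\cdot\nabla v .
\]
The plan is to absorb every term containing $v_{s}$ into $\frac34\mathcal{D}_{z}$ by Young's inequality with exponents $q+1$ and $(q+1)/q$, so that each remainder carries the factor $A^{-1/q}$. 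The remainders are then bounded by powers of $\mathcal{E}(\tau)$ via the Sobolev embedding \eqref{eq:admissible} on $B_{5}(z)$ and the covering estimate \eqref{eq:cover}.

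For the source term, Young with weight $\chi_{z}^{2}$ gives \eqref{eq:young-intro}:
\[
\chi_{z}^{2}|v|^{p}|v_{s}|\le\tfrac14A\chi_{z}^{2}|v_{s}|^{q+1}+CA^{-1/q}\chi_{z}^{2}|v|^{p(q+1)/q}.
\]
The remainder integrates over $B_{5}(z)$ to at most $C\|v\|_{H^{1}(B_{5}(z))}^{p(q+1)/q}\le C\mathcal{E}(\tau)^{\Gamma}$ by \eqref{eq:admissible} and \eqref{eq:cover}, applied to the zero extension of $v$. The term $\chi_{z}^{2}|v||v_{s}|$ is handled the same way: it is bounded by $\tfrac14A\chi_{z}^{2}|v_{s}|^{q+1}+CA^{-1/q}\chi_{z}^{2}|v|^{(q+1)/q}$. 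Since $(q+1)/q<2$, Hölder on the ball bounds the remainder by $C(1+\mathcal{E})\le C(1+\mathcal{E})^{\Gamma}$.

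The commutator term is the main obstacle. Its integrand $|\nabla v|$ appears only in $L^{2}$, so the analogous Young splitting needs $|\nabla v|^{(q+1)/q}$ together with the weight $\chi_{z}^{2}$ on the $v_{s}$ side. I would write
\[
2\chi_{z}|\nabla\chi_{z}||v_{s}||\nabla v|
=2\bigl(\chi_{z}^{2/(q+1)}|v_{s}|\bigr)\bigl(\chi_{z}^{(q-1)/(q+1)}|\nabla\chi_{z}||\nabla v|\bigr),
\]
and apply Young to obtain
\[
\tfrac14A\chi_{z}^{2}|v_{s}|^{q+1}+CA^{-1/q}|\nabla\chi_{z}|^{(q+1)/q}|\nabla v|^{(q+1)/q}.
\]
Because $(q+1)/q<2$ and $\nabla\chi_{z}$ is bounded and supported in $B_{5}(z)$, Hölder gives a remainder of at most $C\|\nabla v\|_{L^{2}(B_{5}(z))}^{(q+1)/q}\le C(1+\mathcal{E})$. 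The factor $\chi_{z}^{(q-1)/(q+1)}\le1$ is harmless. If the powers of $\chi$ misbehave, I would fall back on the cruder bound $|v_{s}|\le1+|v_{s}|^{q+1}$, but the splitting above should suffice.

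Summing, the three absorbed pieces total $\frac34\int\mathcal{D}_{z}$, which leaves $\frac14\int\mathcal{D}_{z}$ on the left-hand side. The remainders are bounded by $C_{6}A^{-1/q}(1+\mathcal{E})^{\Gamma}$, using $A\ge1$ only in that lower-order remainders could also be given extra factors if needed. The time integral of $\mathcal{E}$ makes sense by the measurability \eqref{eq:meas} and the finiteness of $\mathcal{E}$ on compact subintervals. Finally, $\Gamma>1$ because $p(q+1)>2q$ follows from $p>q$.
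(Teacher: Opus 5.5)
Your proposal is correct and follows essentially the same route as the paper: the localized energy identity plus \eqref{eq:L2loc}, then Young's inequality with exponents $q+1$ and $(q+1)/q$ on each of the three terms, each absorbing $\frac14\mathcal{D}_z$, with remainders controlled by \eqref{eq:admissible} and \eqref{eq:cover}. This includes the same splitting $\chi_z^{2/(q+1)}\cdot\chi_z^{(q-1)/(q+1)}$ for the commutator. The one detail to add is that absorbing $\frac34\int_0^s\mathcal{D}_z$ into the left-hand side is legitimate because $v_s\in L^{q+1}(\Omega_\lambda\times(0,S))$, which makes that integral finite.
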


\begin{proof}
Let $S<S_{\max}$ and $s\leq S$.  The restriction of $v$ to $[0,S]$ is a weak
solution of \eqref{eq:rescaled-pb} in the class \eqref{eq:class}, so Lemma
\ref{lem:loc-energy} applies with $h=v|v|^{p-1}$, with $s_{1}=0$, $s_{2}=s$ and
with the function $\chi_{z}$ of \eqref{eq:cutoff}.  Adding to
\eqref{eq:loc-energy} the identity \eqref{eq:L2loc} with $\chi=\chi_{z}$, which
is legitimate because $v\in C^{1}([0,S];L^{2})$, we obtain
\begin{equation}\label{eq:loc-ineq}
\mathcal{E}_{z}(s)+\int_{0}^{s}\mathcal{D}_{z}
\leq\mathcal{E}_{z}(0)
+\int_{0}^{s}\bigl[L_{1}(\tau)+L_{2}(\tau)+L_{3}(\tau)\bigr]d\tau,
\end{equation}
where
\[
L_{1}=\int\chi_{z}^{2}|v|^{p}|v_{s}|,
\qquad
L_{2}=\int\chi_{z}^{2}|v||v_{s}|,
\qquad
L_{3}=2\int\chi_{z}|\nabla\chi_{z}||v_{s}||\nabla v|.
\]
Each of the three terms is estimated by means of the elementary inequality
\begin{equation}\label{eq:young}
ab\leq\frac{1}{4}a^{q+1}+C(q)\,b^{(q+1)/q},
\qquad a,b\geq0,
\end{equation}
in which the first factor carries the dissipation.

Consider $L_{1}$.  Writing
$\chi_{z}^{2}|v|^{p}|v_{s}|
=\bigl[\chi_{z}^{2/(q+1)}|v_{s}|\bigr]\cdot
\bigl[\chi_{z}^{2q/(q+1)}|v|^{p}\bigr]$
and applying H\"older's inequality with the exponents $q+1$ and $(q+1)/q$, we
obtain
\[
L_{1}\leq A^{-\frac{1}{q+1}}\mathcal{D}_{z}^{\frac{1}{q+1}}
\Bigl(\int\chi_{z}^{2}|v|^{\frac{p(q+1)}{q}}\Bigr)^{\frac{q}{q+1}},
\]
so that \eqref{eq:young} gives
\[
L_{1}\leq\frac{1}{4}\mathcal{D}_{z}
+C(q)A^{-1/q}\int\chi_{z}^{2}|v|^{\frac{p(q+1)}{q}}.
\]
By \eqref{eq:admissible}, by $\operatorname{supp}\chi_{z}\subset B_{5}(z)$ and
by \eqref{eq:cover},
\[
\int\chi_{z}^{2}|v|^{\frac{p(q+1)}{q}}
\leq\int_{B_{5}(z)}|v|^{\frac{p(q+1)}{q}}
\leq C\|v\|_{H^{1}(B_{5}(z))}^{\frac{p(q+1)}{q}}
\leq C\mathcal{E}^{\Gamma},
\]
with $\Gamma=p(q+1)/(2q)$.

Consider $L_{2}$.  The same computation with $|v|$ in place of $|v|^{p}$ gives
\[
L_{2}\leq\frac{1}{4}\mathcal{D}_{z}
+C(q)A^{-1/q}\int_{B_{5}(z)}|v|^{\frac{q+1}{q}},
\]
and, since $(q+1)/q<2$, H\"older's inequality on the ball together with
\eqref{eq:cover} gives
$\int_{B_{5}(z)}|v|^{(q+1)/q}
\leq C\bigl(\int_{B_{5}(z)}v^{2}\bigr)^{\frac{q+1}{2q}}
\leq C\mathcal{E}^{\frac{q+1}{2q}}$.

Consider $L_{3}$.  Writing
$\chi_{z}|\nabla\chi_{z}||v_{s}||\nabla v|
=\bigl[\chi_{z}^{2/(q+1)}|v_{s}|\bigr]\cdot
\bigl[\chi_{z}^{(q-1)/(q+1)}|\nabla\chi_{z}||\nabla v|\bigr]$
and applying H\"older's inequality with the same exponents, together with
$\chi_{z}\leq1$ and $|\nabla\chi_{z}|\leq2$, we obtain
\[
L_{3}\leq2A^{-\frac{1}{q+1}}\mathcal{D}_{z}^{\frac{1}{q+1}}
\Bigl(C\int_{B_{5}(z)}|\nabla v|^{\frac{q+1}{q}}\Bigr)^{\frac{q}{q+1}}.
\]
Since $(q+1)/q<2$, H\"older's inequality on $B_{5}(z)$ gives
\[
\Bigl(\int_{B_{5}(z)}|\nabla v|^{\frac{q+1}{q}}\Bigr)^{\frac{q}{q+1}}
\leq C\Bigl(\int_{B_{5}(z)}|\nabla v|^{2}\Bigr)^{1/2}
\leq C\mathcal{E}^{1/2},
\]
whence, by \eqref{eq:young},
\[
L_{3}\leq\frac{1}{4}\mathcal{D}_{z}
+C(q)A^{-1/q}\mathcal{E}^{\frac{q+1}{2q}}.
\]

Inserting the three estimates into \eqref{eq:loc-ineq} and absorbing
$\frac{3}{4}\int_{0}^{s}\mathcal{D}_{z}$ into the left-hand side, which is
legitimate because $v_{s}\in L^{q+1}$ makes that integral finite, we obtain
\eqref{eq:growth}, since both $\mathcal{E}^{\Gamma}$ and
$\mathcal{E}^{(q+1)/(2q)}$ are bounded by $(1+\mathcal{E})^{\Gamma}$, the
inequality $\Gamma\geq(q+1)/(2q)$ holding because $p>1$.  Finally $\Gamma>1$:
we have $2q/(q+1)<q<p$ because $q>1$, whence $p(q+1)>2q$.
\end{proof}

The gain in \eqref{eq:growth} is converted into a lower bound for the
existence time by a comparison argument of Bihari type.

\begin{lemma}\label{lem:bihari}
Assume \eqref{eq:exponents} and $A\geq1$, and let $K\geq\mathcal{E}(0)$.  With
$C_{6}$ and $\Gamma$ as in Lemma \ref{lem:growth}, set
\begin{equation}\label{eq:c0}
c_{K}=\frac{(1+K)^{1-\Gamma}}{2(\Gamma-1)C_{6}}.
\end{equation}
Then
\begin{equation}\label{eq:bihari}
S_{\max}>c_{K}A^{1/q},
\qquad\text{and}\qquad
\mathcal{E}(s)\leq2^{\frac{1}{\Gamma-1}}(1+K)
\quad\text{for }0\leq s\leq c_{K}A^{1/q}.
\end{equation}
\end{lemma}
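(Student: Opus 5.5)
The plan is a Bihari comparison in which the sup over $z$ is taken after integrating, combined with a continuity argument and the blow-up alternative of Lemma \ref{lem:blowupalt}. Taking the supremum over $z\in\mathbb{R}^{n}$ in \eqref{eq:growth}, discarding the nonnegative dissipation term and using $\mathcal{E}_{z}(0)\leq\mathcal{E}(0)\leq K$, we get for every $0\leq s<S_{\max}$
\[
\mathcal{E}(s)\leq K+C_{6}A^{-1/q}\int_{0}^{s}\bigl(1+\mathcal{E}(\tau)\bigr)^{\Gamma}d\tau .
\]
The right-hand side is legitimate because $\mathcal{E}$ is measurable by \eqref{eq:meas}. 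It is also locally bounded on $[0,S_{\max})$ by the third observation following \eqref{eq:EandD}, so the integral is finite.

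Next compare with the ODE. Let $Y$ solve $Y'=C_{6}A^{-1/q}Y^{\Gamma}$ with $Y(0)=1+K$; explicitly
\[
Y(s)^{1-\Gamma}=(1+K)^{1-\Gamma}-(\Gamma-1)C_{6}A^{-1/q}s .
\]
At $s=c_{K}A^{1/q}$, with $c_{K}$ as in \eqref{eq:c0}, the right-hand side equals $\frac12(1+K)^{1-\Gamma}$. Hence $Y(s)\leq2^{1/(\Gamma-1)}(1+K)$ on $[0,c_{K}A^{1/q}]$.

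To show $1+\mathcal{E}\leq Y$ there, set $\Phi(s)=1+K+C_{6}A^{-1/q}\int_{0}^{s}(1+\mathcal{E})^{\Gamma}$. This function is absolutely continuous, and $1+\mathcal{E}\leq\Phi$, so $\Phi'\leq C_{6}A^{-1/q}\Phi^{\Gamma}$ almost everywhere. Differentiating $\Phi^{1-\Gamma}$ then gives $\Phi^{1-\Gamma}(s)\geq(1+K)^{1-\Gamma}-(\Gamma-1)C_{6}A^{-1/q}s$. Therefore $1+\mathcal{E}(s)\leq\Phi(s)\leq Y(s)$ for $s<\min\{S_{\max},c_{K}A^{1/q}\}$. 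Since $2^{1/(\Gamma-1)}(1+K)-1\leq2^{1/(\Gamma-1)}(1+K)$, this yields the bound in \eqref{eq:bihari} on that interval.

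It remains to rule out $S_{\max}\leq c_{K}A^{1/q}$; this is the step needing the most care. Suppose it holds. Then $S_{\max}$ is finite, and $\mathcal{E}(s)\leq2^{1/(\Gamma-1)}(1+K)$ for all $s<S_{\max}$. By \eqref{eq:cover} with $r=8$, the uniformly local norm on balls $B_{8}(z)\cap\Omega_{\lambda}$ appearing in Lemma \ref{lem:blowupalt} is bounded by $C(8,n)^{1/2}$ times $\mathcal{E}^{1/2}$, up to a factor $\sqrt2$. Here we use that $v$ is extended by zero, so norms on $B_{8}(z)\cap\Omega_{\lambda}$ equal norms on $B_{8}(z)$. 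This quantity therefore stays bounded as $s\uparrow S_{\max}$, contradicting Lemma \ref{lem:blowupalt}.

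So $S_{\max}>c_{K}A^{1/q}$, and the estimate then holds on the closed interval $[0,c_{K}A^{1/q}]$. The only delicate points are that the sup over $z$ is taken after integrating, which is why the uniform bound $\mathcal{E}_{z}(0)\leq K$ and the $z$-independence of $C_{6}$ matter, and that the comparison uses only the integral inequality for the possibly merely lower semicontinuous function $\mathcal{E}$. The auxiliary function $\Phi$ handles the latter.
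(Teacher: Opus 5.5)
Your proposal is correct and follows the paper's proof essentially step by step. Your auxiliary function $\Phi$ is exactly $1+Y$ for the paper's $Y$, the integrated inequality for $\Phi^{1-\Gamma}$ and the resulting bound on $[0,c_{K}A^{1/q}]$ are the same, and the case $S_{\max}\leq c_{K}A^{1/q}$ is excluded in the same way, through \eqref{eq:cover} with $r=8$ and Lemma \ref{lem:blowupalt}.
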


Any $K\geq\mathcal{E}(0)$ is allowed here, and not only
$K=\mathcal{E}(0)$, because $\mathcal{E}(0)$ depends on $\lambda$ whereas the
upper bound for it provided by Lemma \ref{lem:data} does not; since
$1-\Gamma<0$, the constant $c_{K}$ is nonincreasing in $K$, and it will be
evaluated at that uniform bound.

\begin{proof}
Let $S<S_{\max}$ and put
\[
Y(s)=K+C_{6}A^{-1/q}
\int_{0}^{s}\bigl(1+\mathcal{E}(\tau)\bigr)^{\Gamma}d\tau,
\qquad 0\leq s\leq S.
\]
The integrand is measurable by \eqref{eq:meas} and bounded on $[0,S]$, so $Y$
is absolutely continuous and nondecreasing, and taking the supremum over $z$
in \eqref{eq:growth}, together with $\mathcal{E}_{z}(0)\leq\mathcal{E}(0)\leq K$,
gives $\mathcal{E}\leq Y$ on $[0,S]$.  Consequently, for almost every $s$,
\[
Y'(s)=C_{6}A^{-1/q}\bigl(1+\mathcal{E}(s)\bigr)^{\Gamma}
\leq C_{6}A^{-1/q}\bigl(1+Y(s)\bigr)^{\Gamma},
\]
and, since $\Gamma>1$,
\[
-\frac{1}{\Gamma-1}\frac{d}{ds}\bigl(1+Y\bigr)^{1-\Gamma}
\leq C_{6}A^{-1/q},
\qquad\text{whence}\qquad
\bigl(1+Y(s)\bigr)^{1-\Gamma}
\geq(1+K)^{1-\Gamma}-(\Gamma-1)C_{6}A^{-1/q}s.
\]
If $s\leq c_{K}A^{1/q}$, with $c_{K}$ as in \eqref{eq:c0}, the right-hand side
is at least $\frac{1}{2}(1+K)^{1-\Gamma}$, and therefore
\[
1+Y(s)\leq2^{\frac{1}{\Gamma-1}}(1+K)=:K_{*}.
\]
Hence $\mathcal{E}\leq K_{*}$ on $[0,\min\{S,c_{K}A^{1/q}\}]$ for every
$S<S_{\max}$, that is, on $[0,\min\{S_{\max},c_{K}A^{1/q}\})$.

Assume now that $S_{\max}\leq c_{K}A^{1/q}$.  Then $\mathcal{E}\leq K_{*}$ on
$[0,S_{\max})$, and \eqref{eq:cover} with $r=8$ gives
\[
\sup_{z\in\mathbb{R}^{n}}
\Bigl(\|v(s)\|_{H^{1}(B_{8}(z)\cap\Omega_{\lambda})}
+\|v_{s}(s)\|_{L^{2}(B_{8}(z)\cap\Omega_{\lambda})}\Bigr)
\leq C(n)K_{*}^{1/2}
\qquad\text{for all }s<S_{\max},
\]
which contradicts Lemma \ref{lem:blowupalt}, applied on $\Omega_{\lambda}$ with
$\varepsilon=A$.  Therefore $S_{\max}>c_{K}A^{1/q}$, and \eqref{eq:bihari}
follows.
\end{proof}

With Lemmas \ref{lem:rescale}, \ref{lem:data}, \ref{lem:glue} and
\ref{lem:bihari} established, the proofs of the main results reduce to a
computation of exponents, which is carried out in the next section.

\section{Proof of the main results}\label{sec:proof}

We begin with the first lower bound of Theorem \ref{thm:lower}, which uses
only Lemmas \ref{lem:data} and \ref{lem:glue}.

\begin{proof}[Proof of Theorem \ref{thm:lower}, part $(i)$]
Let $\varrho\geq1$, so that $\lambda=\varrho^{(p-1)/2}\geq1$.  By Lemma
\ref{lem:data} applied with $R=8$, the rescaled data \eqref{eq:rescaled-data}
satisfy the hypothesis \eqref{eq:unifloc-hyp} of Lemma \ref{lem:glue} with
$N=K_{8}$, a constant which does not depend on $\lambda$ and hence not on
$\varrho$.  Lemma \ref{lem:glue}, applied with $\varepsilon=\varepsilon_{\varrho}$,
then provides a weak solution of \eqref{eq:rescaled-pb} on the interval
$[0,S_{1}]$, where $S_{1}=S_{1}(K_{8};n,p)>0$ depends neither on $\lambda$ nor
on $\varepsilon_{\varrho}$.  In particular the conclusion is the same whether
$\varepsilon_{\varrho}$ tends to zero, is equal to one, or tends to infinity,
that is, in each of the three regimes described in Section \ref{sec:intro}.
Since this solution is unique by Lemma \ref{lem:unique}, we have
$S^{*}_{\lambda}\geq S_{1}$, and therefore, by \eqref{eq:times},
\[
T^{*}(\varrho)\geq\frac{S_{1}}{\lambda}=S_{1}\varrho^{-(p-1)/2},
\]
which is \eqref{eq:lower1} with $C_{l}=S_{1}$.  Finally, $C_{l}$ has the
dependence claimed in Theorem \ref{thm:lower}: the damping term enters Lemmas
\ref{lem:localexist} and \ref{lem:glue} only through its sign, so that $S_{1}$
involves neither $q$ nor $\varepsilon_{\varrho}$, and $K_{8}$ depends only on
$n$ and the norms in \eqref{eq:data}, by Lemma \ref{lem:data}, so that $S_{1}$
involves $\Omega$ only through the values of these norms.
\end{proof}

In view of \eqref{eq:upper}, the exponent in \eqref{eq:lower1} is optimal when
$q\leq q_{c}$.  For $q>q_{c}$ the upper bound in \eqref{eq:upper} is of the
smaller order $\varrho^{-(p-q)/q}$, and the size of the damping term has to be
used; this is what Lemma \ref{lem:bihari} does.

\begin{proof}[Proof of Theorem \ref{thm:lower}, part $(ii)$]
Let $\varrho\geq1$; then $\lambda\geq1$ and, since $\kappa>0$,
$A=\varrho^{\kappa}\geq1$.  Since $\operatorname{supp}\chi_{z}\subset B_{5}(z)$
and $\chi_{z}\leq1$, Lemma \ref{lem:data} with $R=5$ gives
\[
\mathcal{E}(0)\leq\frac{1}{2}\sup_{z\in\mathbb{R}^{n}}
\Bigl(\|f_{\lambda}\|_{H^{1}(B_{5}(z)\cap\Omega_{\lambda})}^{2}
+\|\lambda^{-1}g_{\lambda}\|_{L^{2}(B_{5}(z)\cap\Omega_{\lambda})}^{2}\Bigr)
\leq K_{5}^{2},
\]
a bound which does not depend on $\lambda$, hence not on $\varrho$.  We apply
Lemma \ref{lem:bihari} with $K=K_{5}^{2}$: the constant
\[
c_{*}:=c_{K_{5}^{2}}=\frac{(1+K_{5}^{2})^{1-\Gamma}}{2(\Gamma-1)C_{6}}
\]
then depends only on $n$, $p$, $q$ and the norms appearing in \eqref{eq:data},
since $C_{6}$ and $\Gamma$ depend only on $n$, $p$ and $q$, and Lemma
\ref{lem:bihari} gives $S^{*}_{\lambda}=S_{\max}>c_{*}A^{1/q}$.  By
\eqref{eq:times},
\[
T^{*}(\varrho)\geq\lambda^{-1}c_{*}A^{1/q}
=c_{*}\,\varrho^{-\frac{p-1}{2}+\frac{\kappa}{q}}.
\]
It remains to compute the exponent.  By \eqref{eq:params},
\[
-\frac{p-1}{2}+\frac{\kappa}{q}
=\frac{-q(p-1)+q(p+1)-2p}{2q}
=\frac{2q-2p}{2q}
=-\frac{p-q}{q},
\]
which gives \eqref{eq:lower2} with $C_{l}'=c_{*}$.
\end{proof}

We now combine the lower bounds with the upper bounds \eqref{eq:upper}.

\begin{proof}[Proof of Theorem \ref{thm:two-sided}]
Since $f\not\equiv0$, Proposition \ref{prop:upper} provides $\varrho_{0}\geq1$
such that, for every $\varrho\geq\varrho_{0}$, the solution of \eqref{eq:main}
blows up in finite time and satisfies \eqref{eq:upper} with constants $C_{u}$
and $C_{u}'$ independent of $\varrho$.  The lower bounds \eqref{eq:lower1} and
\eqref{eq:lower2} hold for every $\varrho\geq1$ by Theorem \ref{thm:lower}.
For $q\leq q_{c}$, the lower bound in \eqref{eq:two-sided-sub} is
\eqref{eq:lower1} and the upper bound is the first estimate in
\eqref{eq:upper}; for $q>q_{c}$, the lower bound in \eqref{eq:two-sided-over}
is \eqref{eq:lower2} and the upper bound is the second estimate in
\eqref{eq:upper}.  The identification of $\vartheta(p,q)$ in
\eqref{eq:vartheta} was observed after Theorem \ref{thm:lower}, and
\eqref{eq:two-sided} follows with $C=C_{l}$, $C'=C_{u}$ when
$1<q\leq q_{c}$, and with $C=C_{l}'$, $C'=C_{u}'$ when $q_{c}<q<p$.  This
completes the proof of Theorem \ref{thm:two-sided}.
\end{proof}

\begin{remarks}\label{rem:proof}
\begin{enumerate}
\item[$(i)$] The two parts of Theorem \ref{thm:lower} use the damping term in
different ways.  In part $(i)$ only its sign is used, through the monotonicity
of $\zeta\mapsto\zeta|\zeta|^{q-1}$.  In part $(ii)$ its size is used as well:
the dissipation $A\int\chi^{2}|v_{s}|^{q+1}$ forces the velocity towards the
level $|v_{s}|\sim A^{-1/q}|v|^{p/q}$ at which the two terms of the equation
balance, so that the rate at which the source can feed the local energy is
itself of order $A^{-1/q}$.
\item[$(ii)$] The time scale $A^{1/q}$ extracted from Lemmas \ref{lem:growth}
and \ref{lem:bihari} is optimal for the lifespan under the blow-up hypotheses
\eqref{eq:hyp-blowup}: the upper bound in \eqref{eq:upper} shows that, up to
multiplicative constants, the solution cannot in general be continued beyond
this scale.  For $q<q_{c}$ one has
$A=\varepsilon_{\varrho}\leq1$, and the argument produces nothing beyond part
$(i)$, in agreement with the first estimate of \eqref{eq:upper}.
\item[$(iii)$] The constant $C_{l}'=c_{*}$ of \eqref{eq:lower2} does not
degenerate as $q$ decreases to $q_{c}$.  The only factor in \eqref{eq:c0} which
could do so is $(\Gamma-1)^{-1}$, and it does not: the exponent $\Gamma=p(q+1)/(2q)$ is a decreasing function of
$q$, so that $\Gamma-1\geq\frac{p+1}{2}-1=\frac{p-1}{2}>0$ on the whole range
$q_{c}<q<p$, and at the threshold itself one has
$q_{c}+1=\frac{3p+1}{p+1}$ and hence $\Gamma=\frac{3p+1}{4}$, that is,
$\Gamma-1=\frac{3(p-1)}{4}$.  The second lower-bound scale therefore extends
continuously to the first one as $q\downarrow q_{c}$, instead of being lost
there, in agreement with the fact that the two branches of $\vartheta(p,q)$ in
\eqref{eq:vartheta} meet at that value.
\item[$(iv)$] No smoothness of $f$ and $g$ beyond \eqref{eq:data} is used, and
no restriction on the dimension beyond \eqref{eq:exponents}, since by
\eqref{eq:admissible} the exponent produced by Young's inequality is admissible
in the whole range \eqref{eq:exponents}.  All the estimates take place in the
energy class \eqref{eq:class} itself, which is also the class in which the
upper bounds \eqref{eq:upper} are proved in Appendix \ref{sec:appendix}.
\end{enumerate}
\end{remarks}

\section{Concluding remarks and open problems}\label{sec:open}

The lifespan of the solution of \eqref{eq:main} at large amplitude is now
known up to multiplicative constants:
\[
T^{*}(\varrho)\asymp\varrho^{-\vartheta(p,q)},
\qquad
\vartheta(p,q)=\min\Bigl\{\frac{p-1}{2},\,\frac{p-q}{q}\Bigr\},
\]
for nontrivial $f$ and all sufficiently large $\varrho$.  The upper bounds come
from the concavity method of \cite{BHK}; the lower bounds come from rescaling
the problem before applying any existence theory, and from two properties of
the damping term which are used separately, its monotonicity in the first
regime and its size in the second.  The threshold $q_{c}=2p/(p+1)$ between the
two regimes is the exponent at which the damping term is invariant under the
rescaling \eqref{eq:scaling}, and the gap left by the energy argument of
\cite{BHK}, whose exponent was $1-p$, is closed.

Three questions are left open.  The first is the determination of the sharp
multiplicative constants, that is, whether
$\varrho^{\vartheta(p,q)}T^{*}(\varrho)$ converges as $\varrho\to\infty$ and,
if so, the identification of its limit in terms of the profiles $f$ and $g$;
the constants produced here and in \cite{BHK} are far apart, and neither
argument is designed to be sharp.  The second is the blow-up rate: the
heuristic of Remarks \ref{rem:main} $(ii)$ suggests the formal amplitude
exponent $1/\vartheta(p,q)$, changing branch at $q_{c}$, but establishing a
corresponding pointwise or norm-specific blow-up rate for \eqref{eq:main}
remains open.  The third is
the extent to which the method survives a loss of scaling invariance, for
instance for the variable-exponent equations of \cite{Messaoudi2017}, where $p$
and $q$ depend on $x$ and the rescaling \eqref{eq:scaling} no longer maps the
problem to one of the same form; the two mechanisms used above, the
monotonicity of the damping term and the quantitative rate $A^{-1/q}$, are
local in nature and may have counterparts in that setting.

The two halves of Theorem \ref{thm:two-sided} use the boundedness of $\Omega$
in different ways.  In the lower bounds it enters qualitatively only: through the finite cover in the
proof of Lemma \ref{lem:glue}, which a countable cover with bounded overlap
would replace, and through the setting in which the existence theory of Lemma
\ref{lem:aux} is quoted.  No constant in Theorem \ref{thm:lower} depends on
$|\Omega|$.  In the upper bounds it enters quantitatively, through the constant
$C_{7}=|\Omega|^{(p-q)/(p+1)}$ and through $c_{\Omega}$ in the proof of
Proposition \ref{prop:upper}, both of which are infinite when $|\Omega|$ is.
We therefore expect the two lower bounds to persist on unbounded domains, and
in particular on $\Omega=\mathbb{R}^{n}$ for profiles satisfying
\eqref{eq:data}, but we have not carried out the details; whether
$\vartheta(p,q)$ still describes the lifespan there is a further open question,
and one which the concavity method in its present form does not answer.

\appendix
\section{The upper bounds}\label{sec:appendix}

The upper bounds \eqref{eq:upper} were obtained in \cite{BHK} by the concavity
method.  Theorem \ref{thm:two-sided} requires them in a form in which the
independence of the constants of $\varrho$ is verified at every step, and this
is not automatic: the auxiliary parameters of the concavity argument are
constrained by quantities such as $\beta_{0}$ and $H(0)$, which themselves grow
with $\varrho$.  We therefore go through the argument of \cite{BHK} once,
keeping every constant explicit; the new elements are the choice of the
parameters in \eqref{eq:Lambda-choice}--\eqref{eq:rho0}, which decouples them
from $\varrho$, and the lower bound for $\gamma(0)$ that this choice supplies.
Throughout, $u$ is the weak solution of \eqref{eq:main} in the class
\eqref{eq:class}, with $D=\Omega$ and $\varepsilon=1$, on $[0,T^{*})$, the time
variable is $t$, and $E$ is the energy \eqref{eq:energy}.

Two identities are used.  First, $E$ is absolutely continuous on compact
subintervals of $[0,T^{*})$ and
\begin{equation}\label{eq:E-AC}
E'(t)=-\|u_{t}(t)\|_{q+1}^{q+1}
\qquad\text{for almost every }t\in[0,T^{*}):
\end{equation}
this is Lemma \ref{lem:loc-energy} with $h=u|u|^{p-1}$ and $\chi\equiv1$ on a
ball containing $\overline{\Omega}$, once one observes that
$t\mapsto\|u(t)\|_{p+1}^{p+1}$ is of class $C^{1}$ with derivative
$(p+1)\int_{\Omega}|u|^{p-1}uu_{t}\,dx$, because $v\mapsto|v|^{p-1}v$ is
continuous from $L^{2p}(\Omega)$ into $L^{2}(\Omega)$,
$u\in C([0,T];L^{2p}(\Omega))$ by \eqref{eq:exponents} and
$u\in C^{1}([0,T];L^{2}(\Omega))$.  Second,
$\Phi(t)=\int_{\Omega}u(t)u_{t}(t)\,dx$ is absolutely continuous on compact
subintervals of $[0,T^{*})$ and
\begin{equation}\label{eq:Phi-AC}
\Phi'(t)=\|u_{t}(t)\|_{2}^{2}-\|\nabla u(t)\|_{2}^{2}+\|u(t)\|_{p+1}^{p+1}
-\int_{\Omega}u\,u_{t}|u_{t}|^{q-1}\,dx
\qquad\text{for almost every }t,
\end{equation}
which is \eqref{eq:weak} with the admissible test function $\varphi=u$.  No
second derivative in time is needed.

Let
\begin{equation}\label{eq:w-def}
\mathcal{W}(\zeta)=\frac{\zeta}{2}
-\frac{B_{*}^{p+1}}{p+1}\,\zeta^{\frac{p+1}{2}},
\qquad\zeta\geq0 .
\end{equation}
Since $\mathcal{W}'$ vanishes exactly at $\zeta=\mu_{1}$, the function
$\mathcal{W}$ increases on $(0,\mu_{1})$ and decreases on $(\mu_{1},\infty)$ to
$-\infty$; moreover $\mathcal{W}(\mu_{1})=E_{1}$, because
$B_{*}^{p+1}\mu_{1}^{(p+1)/2}=\mu_{1}$ by \eqref{eq:E1}, and
$E(t)\geq\mathcal{W}(\|\nabla u(t)\|_{2}^{2})$ by the Sobolev inequality.  The
next lemma is the potential-well lemma of Vitillaro \cite{Vitillaro2000}, see
also \cite{SunRenGao2016,BHK}, completed by the lower bound \eqref{eq:beta0},
which is what will absorb the constant $E_{1}$ below.

\begin{lemma}\label{lem:well}
Assume \eqref{eq:data}, \eqref{eq:exponents} and \eqref{eq:hyp-blowup}, and
let $\mu_{2}>\mu_{1}$ be the solution of $\mathcal{W}(\mu_{2})=E(0)$.  Then,
for every $t\in[0,T^{*})$,
\begin{equation}\label{eq:well}
\|\nabla u(t)\|_{2}^{2}\geq\mu_{2},
\qquad
0<H(0)\leq H(t)\leq\frac{1}{p+1}\|u(t)\|_{p+1}^{p+1},
\qquad
\|u(t)\|_{p+1}^{p+1}\geq\beta_{0},
\end{equation}
where $H(t)=E_{1}-E(t)$ and
\begin{equation}\label{eq:beta0}
\beta_{0}=(p+1)\Bigl(\frac{\mu_{2}}{2}-E(0)\Bigr)
=B_{*}^{p+1}\mu_{2}^{\frac{p+1}{2}}>\mu_{1}.
\end{equation}
\end{lemma}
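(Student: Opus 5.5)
The plan is the classical potential-well argument of Vitillaro, adapted to the energy class. It has three steps: the algebra of $\beta_0$, a continuity argument for $\|\nabla u\|_2^2$, and then the bounds on $H$.

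\emph{Step 1: identities for $\mu_2$ and $\beta_0$.} Since $E(0)<E_1=\mathcal{W}(\mu_1)$ and $\mathcal{W}$ decreases strictly from $E_1$ to $-\infty$ on $(\mu_1,\infty)$, there is a unique $\mu_2>\mu_1$ with $\mathcal{W}(\mu_2)=E(0)$. From $\mathcal{W}(\mu_2)=E(0)$ we get
\[
\frac{\mu_2}{2}-E(0)=\frac{B_*^{p+1}}{p+1}\,\mu_2^{\frac{p+1}{2}},
\]
which is the equality in \eqref{eq:beta0}. Finally $B_*^{p+1}\mu_2^{(p+1)/2}>B_*^{p+1}\mu_1^{(p+1)/2}=\mu_1$, which gives $\beta_0>\mu_1$.

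\emph{Step 2: $\|\nabla u(t)\|_2^2\geq\mu_2$ for all $t$.} By \eqref{eq:E-AC}, $E$ is nonincreasing, so $\mathcal{W}(\|\nabla u(t)\|_2^2)\leq E(t)\leq E(0)<E_1$.

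\begin{itemize}
\item At $t=0$, the Sobolev inequality gives $\mathcal{W}(\varrho^2\|\nabla f\|_2^2)\leq E(0)$. Since $\varrho^2\|\nabla f\|_2^2>\mu_1$ by \eqref{eq:hyp-blowup} and $\mathcal{W}$ is decreasing past $\mu_1$, this forces $\varrho^2\|\nabla f\|_2^2\geq\mu_2$.
\item For $t>0$, suppose $\|\nabla u(t_0)\|_2^2<\mu_2$ for some $t_0$. The map $t\mapsto\|\nabla u(t)\|_2^2$ is continuous because $u\in C([0,T];H_0^1)$. By the intermediate value theorem, some $t_1$ has $\|\nabla u(t_1)\|_2^2\in(\mu_1,\mu_2)$, where $\mathcal{W}>\mathcal{W}(\mu_2)=E(0)$. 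This contradicts $\mathcal{W}(\|\nabla u(t_1)\|_2^2)\leq E(0)$.
\end{itemize}

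This continuity argument is the only delicate point. Its care lies in using the monotonicity of $E$ from \eqref{eq:E-AC} rather than a smooth-solution computation.

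\emph{Step 3: the bounds on $H$ and on $\|u\|_{p+1}^{p+1}$.}

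\begin{itemize}
\item $H(t)\geq H(0)=E_1-E(0)>0$ follows from the monotonicity of $E$.
\item For the upper bound, write
\[
H(t)=E_1-\frac12\|u_t\|_2^2-\frac12\|\nabla u\|_2^2+\frac1{p+1}\|u\|_{p+1}^{p+1}.
\]
Since $E_1=\mathcal{W}(\mu_1)<\mu_1/2<\mu_2/2\leq\frac12\|\nabla u\|_2^2$, we get $H(t)\leq\frac1{p+1}\|u\|_{p+1}^{p+1}$.
\item For the last inequality, use the definition of $E$ and $E(t)\leq E(0)$:
\[
\frac1{p+1}\|u\|_{p+1}^{p+1}\geq\frac12\|\nabla u\|_2^2-E(0)\geq\frac{\mu_2}{2}-E(0)=\frac{\beta_0}{p+1}.
\]
This gives $\|u(t)\|_{p+1}^{p+1}\geq\beta_0$.
\end{itemize}
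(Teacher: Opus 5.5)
Your proposal is correct and follows the paper's proof essentially step by step: uniqueness of $\mu_{2}$ from the monotonicity of $\mathcal{W}$ on $(\mu_{1},\infty)$, the two expressions of $\beta_{0}$ via $\mathcal{W}(\mu_{2})=E(0)$ and $B_{*}^{p+1}\mu_{1}^{(p+1)/2}=\mu_{1}$, and the intermediate value argument for $\|\nabla u(t)\|_{2}^{2}\geq\mu_{2}$ using $E(t)\leq E(0)$ from \eqref{eq:E-AC}. The bounds on $H$ and $\|u\|_{p+1}^{p+1}$ likewise come, as in the paper, from $E_{1}<\frac{1}{2}\mu_{1}<\frac{1}{2}\mu_{2}$ together with the monotonicity of $E$.
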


\begin{proof}
Since $\mathcal{W}(\mu_{1})=E_{1}>E(0)$, the number $\mu_{2}$ exists and is
unique.  At $t=0$, $\mathcal{W}(\|\nabla u_{0}\|_{2}^{2})\leq
E(0)=\mathcal{W}(\mu_{2})$ with $\|\nabla u_{0}\|_{2}^{2}>\mu_{1}$ by
\eqref{eq:hyp-blowup}, hence $\|\nabla u_{0}\|_{2}^{2}\geq\mu_{2}$.  If
$\|\nabla u(t_{0})\|_{2}^{2}<\mu_{2}$ for some $t_{0}$, continuity gives
$t_{1}\in(0,t_{0})$ with $\mu_{1}<\|\nabla u(t_{1})\|_{2}^{2}<\mu_{2}$,
and then $E(t_{1})\geq\mathcal{W}(\|\nabla u(t_{1})\|_{2}^{2})
>\mathcal{W}(\mu_{2})=E(0)$, which contradicts $E(t_{1})\leq E(0)$, a
consequence of \eqref{eq:E-AC}; this proves the first inequality.  The second
follows from \eqref{eq:E-AC}, \eqref{eq:hyp-blowup} and
$E_{1}<\frac{1}{2}\mu_{1}<\frac{1}{2}\mu_{2}$, since
$H=E_{1}-\frac{1}{2}\|u_{t}\|_{2}^{2}-\frac{1}{2}\|\nabla u\|_{2}^{2}
+\frac{1}{p+1}\|u\|_{p+1}^{p+1}$.  The third follows from
$\frac{1}{p+1}\|u\|_{p+1}^{p+1}
=\frac{1}{2}\|u_{t}\|_{2}^{2}+\frac{1}{2}\|\nabla u\|_{2}^{2}-E(t)
\geq\frac{1}{2}\mu_{2}-E(0)$.  The two expressions of $\beta_{0}$ agree
because $E(0)=\mathcal{W}(\mu_{2})$, and
$\beta_{0}>B_{*}^{p+1}\mu_{1}^{(p+1)/2}=\mu_{1}$ because $\mu_{2}>\mu_{1}$.
\end{proof}

\begin{proposition}\label{prop:upper}
Assume \eqref{eq:data}, \eqref{eq:exponents} and $f\not\equiv0$, and let
\begin{equation}\label{eq:beta}
\beta=\min\Bigl\{\frac{p-q}{q(p+1)},\ \frac{p-1}{2(p+1)}\Bigr\}
=\begin{cases}
\dfrac{p-1}{2(p+1)}, & q\leq q_{c},\\[3mm]
\dfrac{p-q}{q(p+1)}, & q>q_{c},
\end{cases}
\qquad\text{so that}\qquad
(p+1)\beta=\vartheta(p,q).
\end{equation}
There exist $\varrho_{0}\geq1$ and $C>0$, depending only on $p$, $q$,
$|\Omega|$, $B_{*}$, $\|\nabla f\|_{2}$, $\|f\|_{2}$, $\|f\|_{p+1}$ and
$\|g\|_{2}$, such that for every $\varrho\geq\varrho_{0}$ the solution of
\eqref{eq:main} blows up in finite time and
\begin{equation}\label{eq:upper-app}
T^{*}(\varrho)\leq C\varrho^{-\vartheta(p,q)},
\end{equation}
which is \eqref{eq:upper}.
\end{proposition}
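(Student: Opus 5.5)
We run the concavity argument of \cite{BHK} with the functional
\[
\gamma(t)=H(t)^{1-\beta}+\theta\,\Phi(t),\qquad H=E_{1}-E,\quad \Phi=\int_{\Omega}uu_{t}\,dx,
\]
where $\beta$ is given by \eqref{eq:beta} and $\theta>0$ is a small parameter. Throughout, the dependence of every constant on $\varrho$ is tracked. The goal is to establish three things:
\begin{enumerate}
\item[(a)] $\gamma'\geq\omega\,\gamma^{1/(1-\beta)}$ almost everywhere, with $\omega>0$ independent of $\varrho$;
\item[(b)] $\gamma(0)\geq c\,\varrho^{(p+1)(1-\beta)}$, with $c>0$ independent of $\varrho$;
\item[(c)] the integrated bound $T^{*}\leq\frac{1-\beta}{\beta\omega}\gamma(0)^{-\beta/(1-\beta)}\leq C\varrho^{-(p+1)\beta}=C\varrho^{-\vartheta(p,q)}$.
\end{enumerate}
The last equality in (c) is exactly the identity $(p+1)\beta=\vartheta$ recorded in \eqref{eq:beta}. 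Since $\gamma$ is built from $E$ and $\Phi$, it is absolutely continuous by \eqref{eq:E-AC} and \eqref{eq:Phi-AC}, so no second time derivative is needed.

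\textbf{Preliminaries.} First we fix $\varrho_{0}$ so that \eqref{eq:hyp-blowup} holds for $\varrho\geq\varrho_{0}$; this is possible by the discussion following \eqref{eq:hyp-blowup}. We then record the orders of the relevant quantities: $H(0)\asymp\varrho^{p+1}$, since $E(0)\leq C\varrho^{2}-c\varrho^{p+1}$, and $\beta_{0}\geq(p+1)H(0)\gtrsim\varrho^{p+1}$. Lemma \ref{lem:well} then gives $H(t)\geq H(0)$ and $\|u\|_{p+1}^{p+1}\geq\beta_{0}$ for all $t$.

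\textbf{Derivative of $\gamma$.} Differentiating gives
\[
\gamma'=(1-\beta)H^{-\beta}\|u_{t}\|_{q+1}^{q+1}+\theta\Phi'.
\]
In $\Phi'$ we rewrite $-\|\nabla u\|_{2}^{2}$ through the energy, which yields
\[
\Phi'\geq c_{1}\bigl(\|u_{t}\|_{2}^{2}+\|u\|_{p+1}^{p+1}+H\bigr)-(p+1)E_{1}-\Bigl|\int u\,u_{t}|u_{t}|^{q-1}\Bigr|.
\]
The constant term $E_{1}$ is absorbed into a fraction of $\|u\|_{p+1}^{p+1}$ using $\|u\|_{p+1}^{p+1}\geq\beta_{0}>\mu_{1}$ from \eqref{eq:beta0}; this step uses only that $\beta_{0}$ is large, not its size in $\varrho$.

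\textbf{The damping term (main obstacle).} This is where the two regimes separate, and where the $\varrho$-independence is delicate. We write
\[
\Bigl|\int u\,u_{t}|u_{t}|^{q-1}\Bigr|\leq\|u\|_{q+1}\|u_{t}\|_{q+1}^{q}\leq C_{7}\|u\|_{p+1}\|u_{t}\|_{q+1}^{q},
\]
with $C_{7}$ coming from H\"older on $\Omega$. We then apply Young's inequality with a parameter $\Lambda$, splitting the product into $\Lambda^{-1}$ times a multiple of $\|u_{t}\|_{q+1}^{q+1}$ and a remainder of the form $\Lambda^{q}\|u\|_{p+1}^{q+1}$. The remainder is converted via
\[
\|u\|_{p+1}^{q+1}\leq\|u\|_{p+1}^{p+1}\cdot\|u\|_{p+1}^{q-p}\leq\|u\|_{p+1}^{p+1}\beta_{0}^{(q-p)/(p+1)}.
\]
For the first piece to be absorbed by $(1-\beta)H^{-\beta}\|u_{t}\|_{q+1}^{q+1}$, we need $\Lambda$ of size $H^{\beta}/\theta$. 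Using $H\leq\frac{1}{p+1}\|u\|_{p+1}^{p+1}$, the remainder is then bounded by
\[
\theta^{-q}\,\|u\|_{p+1}^{(p+1)(q\beta+(q+1)/(p+1))}.
\]
Comparing this with $\|u\|_{p+1}^{p+1}$, the condition $q\beta\leq(p-q)/(p+1)$ is exactly what lets the excess power be controlled by negative powers of $\beta_{0}\geq\beta_{0}(\varrho_{0})$. This holds because $\beta\leq\frac{p-q}{q(p+1)}$, and is sharp in the overdamped range. I would choose $\Lambda$ depending only on $\theta$ and on $H(t)$ itself, i.e.\ a time-dependent choice as in Messaoudi's argument, so that all remaining factors are $\varrho$-free; this is the choice in \eqref{eq:Lambda-choice}. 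Then $\theta$ is taken small, independent of $\varrho$, and $\varrho_{0}$ is enlarged so that the leftover terms are small relative to $\|u\|_{p+1}^{p+1}$. The other constraint, $\beta\leq\frac{p-1}{2(p+1)}$, is used to bound
\[
|\Phi|^{1/(1-\beta)}\leq C\bigl(\|u_{t}\|_{2}^{2}+\|u\|_{p+1}^{p+1}\bigr)
\]
via $\|u\|_{2}\leq c_{\Omega}\|u\|_{p+1}$, the bound $1/(2(1-\beta))\leq 1$, and the fact that $\|u\|_{p+1}^{p+1}\geq\beta_{0}\geq1$. This gives $\gamma^{1/(1-\beta)}\leq C\bigl(H+\|u_{t}\|_{2}^{2}+\|u\|_{p+1}^{p+1}\bigr)$, which yields (a).

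\textbf{Lower bound for $\gamma(0)$.} We bound $|\theta\Phi(0)|\leq\theta\varrho^{2}\|f\|_{2}\|g\|_{2}$, which is $o(\varrho^{(p+1)(1-\beta)})$ because $(p+1)(1-\beta)>2$. Hence $\gamma(0)\geq\frac{1}{2}H(0)^{1-\beta}\geq c\,\varrho^{(p+1)(1-\beta)}$ for $\varrho\geq\varrho_{0}$, which is (b). Integrating (a) then gives (c), and the resulting constants depend only on the listed quantities.
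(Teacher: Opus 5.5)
Your architecture matches the paper's: the same functional $\gamma=H^{1-\beta}+\theta\Phi$, the same use of $\beta\le\frac{p-1}{2(p+1)}$ to bound $|\Phi|^{1/(1-\beta)}$, and the same lower bound $\gamma(0)\ge\frac12H(0)^{1-\beta}$. However, your treatment of the damping term does not close for $q_c\le q<p$, which is exactly the range of the second upper bound.

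There $\beta=\frac{p-q}{q(p+1)}$, so $q\beta+\frac{q+1}{p+1}=1$. After using $H\le\frac{1}{p+1}\|u\|_{p+1}^{p+1}$, the remainder $H^{\beta q}\|u\|_{p+1}^{q+1}$ is bounded by a constant times $\|u\|_{p+1}^{p+1}$ with excess exponent exactly zero. No negative power of $\beta_0$ is produced, and enlarging $\varrho_0$ gains nothing. Negative powers occur only for $q<q_c$, where the excess exponent is $\kappa/(p+1)<0$, and there your argument does close. In the overdamped range the smallness must come entirely from the parameters. But with your choice $\Lambda\sim H^{\beta}/\theta$, the remainder carries the factor $\theta\Lambda^{q}\sim\theta^{1-q}H^{\beta q}$. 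It is therefore $\theta^{-q}$ times the good term $\theta\frac{p-1}{p+1}\|u\|_{p+1}^{p+1}$, so taking $\theta$ small makes it worse, and nothing else absorbs it.

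The scaling has to be reversed. Absorbing $\theta\Lambda^{-1}\|u_t\|_{q+1}^{q+1}$ into $(1-\beta)H^{-\beta}\|u_t\|_{q+1}^{q+1}$ only requires $\Lambda\gtrsim\theta H^{\beta}$. With $\Lambda\sim\theta H^{\beta}$, and using $q\beta+\frac{q+1}{p+1}\le1$ and $\|u\|_{p+1}^{p+1}\ge1$, the remainder is $\lesssim\theta^{q+1}H^{\beta q}\|u\|_{p+1}^{q+1}\le C\theta^{q+1}\|u\|_{p+1}^{p+1}$. This is a small fraction of $\theta\|u\|_{p+1}^{p+1}$ once $\theta$ is small, independently of $\varrho$.

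This is what the paper does:
\begin{itemize}
\item It writes Young's inequality with the weights $\Lambda^{(q+1)/q}H^{-\beta}$ and $\Lambda^{-(q+1)}H^{\beta q}$, for a fixed constant $\Lambda$ (not time-dependent) chosen so that $C_7\Lambda^{-(q+1)}\le\frac{p-1}{4(p+1)}$.
\item It sets $\theta=(1-\beta)\Lambda^{-(q+1)/q}$, so the $\|u_t\|_{q+1}^{q+1}$ terms cancel exactly and the remainder is at most $\theta C_7\Lambda^{-(q+1)}\|u\|_{p+1}^{p+1}$.
\item It uses large $\varrho_0$ only for two things: to absorb $2E_1$, since $E(0)\le-4\mu_1$ gives $\beta_0\ge4(p+1)\mu_1$; and to ensure $\theta|\Phi(0)|\le\frac12H(0)^{1-\beta}$.
\end{itemize}
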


\begin{proof}
\emph{Parameters.}  Put $C_{7}=|\Omega|^{\frac{p-q}{p+1}}$, fix $\Lambda>1$ with
\begin{equation}\label{eq:Lambda-choice}
C_{7}\Lambda^{-(q+1)}\leq\frac{p-1}{4(p+1)},
\qquad\text{and set}\qquad
\theta=(1-\beta)\,\Lambda^{-\frac{q+1}{q}};
\end{equation}
$\Lambda$ and $\theta$ depend only on $p$, $q$ and $|\Omega|$.  Since
$f\not\equiv0$, \eqref{eq:E0} gives $E(0)\to-\infty$ and
$H(0)=E_{1}-E(0)\geq\frac{\varrho^{p+1}}{p+1}\|f\|_{p+1}^{p+1}-C\varrho^{2}
\to\infty$ as $\varrho\to\infty$, while
$(p+1)(1-\beta)\geq\frac{p+3}{2}>2$ by \eqref{eq:beta}; hence there is
$\varrho_{0}\geq1$ such that, for every $\varrho\geq\varrho_{0}$,
\begin{equation}\label{eq:rho0}
\begin{gathered}
\varrho^{2}\|\nabla f\|_{2}^{2}>\mu_{1},
\qquad
E(0)\leq-4\mu_{1},
\qquad
H(0)\geq\max\Bigl\{1,\ \frac{\varrho^{p+1}}{2(p+1)}\|f\|_{p+1}^{p+1}\Bigr\},\\[2pt]
\theta\varrho^{2}\|f\|_{2}\|g\|_{2}\leq\frac{1}{2}H(0)^{1-\beta}.
\end{gathered}
\end{equation}
The first two conditions imply \eqref{eq:hyp-blowup}, so Lemma
\ref{lem:well} applies, and
$\beta_{0}\geq-(p+1)E(0)\geq4(p+1)\mu_{1}$, whence
\begin{equation}\label{eq:beta0-large}
\frac{(p-1)\mu_{1}}{(p+1)\beta_{0}}\leq\frac{p-1}{4(p+1)}.
\end{equation}
Fix $\varrho\geq\varrho_{0}$ and set $\gamma=H^{1-\beta}+\theta\Phi$ on
$[0,T^{*})$.  Since $H\geq H(0)>0$, $\gamma$ is absolutely continuous on
compact subintervals, and by \eqref{eq:E-AC} and \eqref{eq:Phi-AC}, for almost
every $t$,
\begin{equation}\label{eq:gamma-prime}
\gamma'=(1-\beta)H^{-\beta}\|u_{t}\|_{q+1}^{q+1}
+\theta\Bigl(\|u_{t}\|_{2}^{2}-\|\nabla u\|_{2}^{2}+\|u\|_{p+1}^{p+1}
-\int_{\Omega}u\,u_{t}|u_{t}|^{q-1}\,dx\Bigr).
\end{equation}

\emph{Lower bound for $\gamma'$.}  Young's inequality with the exponents
$\frac{q+1}{q}$ and $q+1$, applied to
$\bigl(\Lambda H^{-\frac{\beta q}{q+1}}|u_{t}|^{q}\bigr)
\bigl(\Lambda^{-1}H^{\frac{\beta q}{q+1}}|u|\bigr)$, gives
\begin{equation}\label{eq:young-app}
\int_{\Omega}|u||u_{t}|^{q}\,dx
\leq\Lambda^{\frac{q+1}{q}}H^{-\beta}\|u_{t}\|_{q+1}^{q+1}
+\Lambda^{-(q+1)}H^{\beta q}\int_{\Omega}|u|^{q+1}\,dx,
\end{equation}
and H\"older's inequality together with \eqref{eq:well} gives
\begin{equation}\label{eq:G-est}
H^{\beta q}\int_{\Omega}|u|^{q+1}\,dx
\leq C_{7}H^{\beta q}\bigl(\|u\|_{p+1}^{p+1}\bigr)^{\frac{q+1}{p+1}}
\leq C_{7}(p+1)^{-\beta q}
\bigl(\|u\|_{p+1}^{p+1}\bigr)^{\frac{q+1}{p+1}+\beta q}
\leq C_{7}\|u\|_{p+1}^{p+1}.
\end{equation}
Here the second inequality uses $H\leq\frac{1}{p+1}\|u\|_{p+1}^{p+1}$ from
\eqref{eq:well}, and the third uses $(p+1)^{-\beta q}\leq1$ together with
$\frac{q+1}{p+1}+\beta q\leq1$, which is \eqref{eq:beta} rewritten as
$\beta q\leq\frac{p-q}{p+1}$, and with
$\|u\|_{p+1}^{p+1}\geq(p+1)H(0)\geq1$ by \eqref{eq:well} and \eqref{eq:rho0}.
We insert $-\int_{\Omega}u\,u_{t}|u_{t}|^{q-1}\geq-\int_{\Omega}|u||u_{t}|^{q}$,
\eqref{eq:young-app}, \eqref{eq:G-est} and the identity
$\|\nabla u\|_{2}^{2}=\frac{2}{p+1}\|u\|_{p+1}^{p+1}-\|u_{t}\|_{2}^{2}-2H+2E_{1}$
into \eqref{eq:gamma-prime}, and bound the constant term by
$2E_{1}=\frac{(p-1)\mu_{1}}{p+1}
\leq\frac{(p-1)\mu_{1}}{(p+1)\beta_{0}}\|u\|_{p+1}^{p+1}
\leq\frac{p-1}{4(p+1)}\|u\|_{p+1}^{p+1}$, by \eqref{eq:well} and
\eqref{eq:beta0-large}.  This yields
\[
\gamma'\geq\bigl[1-\beta-\theta\Lambda^{\frac{q+1}{q}}\bigr]
H^{-\beta}\|u_{t}\|_{q+1}^{q+1}
+2\theta\|u_{t}\|_{2}^{2}
+\theta\Bigl(\frac{p-1}{p+1}-\frac{C_{7}}{\Lambda^{q+1}}
-\frac{p-1}{4(p+1)}\Bigr)\|u\|_{p+1}^{p+1}
+2\theta H .
\]
The first bracket vanishes and the coefficient of
$\theta\|u\|_{p+1}^{p+1}$ is at least $\frac{p-1}{2(p+1)}$, both by
\eqref{eq:Lambda-choice}; since $\frac{p-1}{2(p+1)}<2$,
\begin{equation}\label{eq:gamma-lower}
\gamma'\geq\alpha_{1}\bigl(\|u_{t}\|_{2}^{2}+\|u\|_{p+1}^{p+1}+H\bigr),
\qquad
\alpha_{1}=\theta\,\frac{p-1}{2(p+1)}.
\end{equation}

\emph{Upper bound for $\gamma^{1/(1-\beta)}$.}  By the last condition in
\eqref{eq:rho0},
$\gamma(0)\geq H(0)^{1-\beta}-\theta\varrho^{2}\|f\|_{2}\|g\|_{2}
\geq\frac{1}{2}H(0)^{1-\beta}>0$, so $\gamma\geq\gamma(0)>0$ on $[0,T^{*})$
by \eqref{eq:gamma-lower}.  From $\gamma\leq H^{1-\beta}+\theta|\Phi|$,
the inequality $(a+b)^{r}\leq2^{r}(a^{r}+b^{r})$ for $a,b\geq0$, and
$|\Phi|\leq\|u\|_{2}\|u_{t}\|_{2}
\leq|\Omega|^{\frac{p-1}{2(p+1)}}\|u\|_{p+1}\|u_{t}\|_{2}$,
\[
\gamma^{\frac{1}{1-\beta}}
\leq2^{\frac{1}{1-\beta}}
\Bigl(H+\theta^{\frac{1}{1-\beta}}|\Phi|^{\frac{1}{1-\beta}}\Bigr),
\qquad
|\Phi|^{\frac{1}{1-\beta}}
\leq c_{\Omega}\Bigl(\|u_{t}\|_{2}^{2}
+\|u\|_{p+1}^{\frac{2}{1-2\beta}}\Bigr),
\]
with $c_{\Omega}=|\Omega|^{\frac{p-1}{2(p+1)(1-\beta)}}$, the second inequality
being Young's inequality with the conjugate exponents $2(1-\beta)$ and
$\frac{2(1-\beta)}{1-2\beta}$, both larger than one since $\beta<\frac{1}{2}$.
Moreover $\|u\|_{p+1}^{\frac{2}{1-2\beta}}
=\bigl(\|u\|_{p+1}^{p+1}\bigr)^{\frac{2}{(1-2\beta)(p+1)}}
\leq\|u\|_{p+1}^{p+1}$: indeed $(1-2\beta)(p+1)\geq2$, since
$(1-2\beta)(p+1)=2$ when $\beta=\frac{p-1}{2(p+1)}$ and
$(1-2\beta)(p+1)=\frac{q(p+1)-2(p-q)}{q}\geq2$ when
$\beta=\frac{p-q}{q(p+1)}$, the latter being equivalent to $q(p+1)\geq2p$,
which holds because then $q\geq q_{c}$; and $\|u\|_{p+1}^{p+1}\geq1$.
Therefore
\begin{equation}\label{eq:gamma-upper}
\gamma^{\frac{1}{1-\beta}}
\leq\alpha_{2}\bigl(\|u_{t}\|_{2}^{2}+\|u\|_{p+1}^{p+1}+H\bigr),
\qquad
\alpha_{2}=2^{\frac{1}{1-\beta}}
\bigl(1+\theta^{\frac{1}{1-\beta}}c_{\Omega}\bigr).
\end{equation}

\emph{Integration.}  By \eqref{eq:gamma-lower} and \eqref{eq:gamma-upper},
$\gamma'\geq\omega\gamma^{1/(1-\beta)}$ almost everywhere on $[0,T^{*})$, with
$\omega=\alpha_{1}/\alpha_{2}>0$ depending only on $p$, $q$ and $|\Omega|$.
Since $\gamma\geq\gamma(0)>0$ is absolutely continuous on compact subintervals,
so is $\gamma^{-\beta/(1-\beta)}$, and
\[
\frac{d}{dt}\gamma^{-\frac{\beta}{1-\beta}}
=-\frac{\beta}{1-\beta}\,\gamma^{-\frac{1}{1-\beta}}\gamma'
\leq-\frac{\beta\omega}{1-\beta},
\qquad\text{hence}\qquad
0<\gamma(t)^{-\frac{\beta}{1-\beta}}
\leq\gamma(0)^{-\frac{\beta}{1-\beta}}-\frac{\beta\omega}{1-\beta}\,t
\]
for $0\leq t<T^{*}$, where the left-hand side is positive because
$\gamma\leq H^{1-\beta}+\theta\|u\|_{2}\|u_{t}\|_{2}$ is finite as long as
the solution exists.  Therefore
$T^{*}\leq\frac{1-\beta}{\beta\omega}\,\gamma(0)^{-\beta/(1-\beta)}<\infty$, and
with $\gamma(0)\geq\frac{1}{2}H(0)^{1-\beta}$ and
$H(0)\geq\frac{\varrho^{p+1}}{2(p+1)}\|f\|_{p+1}^{p+1}$ from \eqref{eq:rho0},
\[
T^{*}(\varrho)
\leq\frac{1-\beta}{\beta\omega}\,2^{\frac{\beta}{1-\beta}}H(0)^{-\beta}
\leq\frac{1-\beta}{\beta\omega}\,2^{\frac{\beta}{1-\beta}}
\Bigl(\frac{\|f\|_{p+1}^{p+1}}{2(p+1)}\Bigr)^{-\beta}\varrho^{-(p+1)\beta},
\]
which is \eqref{eq:upper-app} since $(p+1)\beta=\vartheta(p,q)$.
\end{proof}

\begin{remark}\label{rem:BHKstatement}
The upper bounds in \eqref{eq:upper} coincide with those obtained in
\cite{BHK} by the concavity method.  We have reproduced the
argument above in order to track explicitly the dependence of all auxiliary
parameters and constants on the amplitude $\varrho$.  In particular, the
choices \eqref{eq:Lambda-choice}--\eqref{eq:rho0} yield constants independent
of $\varrho$ and the lower estimate
$\gamma(0)\geq\frac{1}{2}H(0)^{1-\beta}$ used in the final integration.  The
lower lifespan estimate obtained in \cite{BHK} is of order $\varrho^{1-p}$;
Theorem \ref{thm:lower} improves this estimate and is proved independently of
\cite{BHK}.
\end{remark}

\section*{Declarations}

\noindent\textbf{Funding.} The author received no specific funding for this
work.

\medskip
\noindent\textbf{Competing interests.} The author declares no competing
interests.

\medskip
\noindent\textbf{Data availability.} No datasets were generated or analysed
during the present study.


\end{document}